\long\def\symbolfootnote[#1]#2{\begingroup%
\def\thefootnote{\fnsymbol{footnote}}\footnote[#1]{#2}\endgroup}
\qed\vspace{5pt}}
\newtheoremstyle{lause}
{5pt}
{5pt}
{\slshape}
{\parindent}
{\bfseries}
{.}
{.5em}
{}
\theoremstyle{lause}
\newtheoremstyle{maaritelma}
{5pt}
{5pt}
{\rmfamily}
{\parindent}
{\bfseries}
{.}
{.5em}
{}
\theoremstyle{maaritelma}
\newtheoremstyle{lause}
{5pt}
{5pt}
{\slshape}
{\parindent}
{\bfseries}
{.}
{.5em}
{}
\theoremstyle{lause}
\newtheorem{theorem}{Theorem}[section]
\newtheorem{lemma}[theorem]{Lemma}
\newtheorem{corollary}[theorem]{Corollary}
\newtheoremstyle{maaritelma}
{5pt}
{5pt}
{\rmfamily}
{\parindent}
{\bfseries}
{.}
{.5em}
{}
\theoremstyle{maaritelma}
\newtheorem{definition}[theorem]{Definition}
\newtheorem{remark}[theorem]{Remark}
\numberwithin{equation}{section}
\begin{document}

\thispagestyle{empty}

\begin{center}

{\large{\textbf{On the theory of capacities on locally compact spaces\\ and its interaction with the theory of balayage}}}

\vspace{18pt}

\textbf{Natalia Zorii}

\vspace{18pt}

\emph{In memory of Makoto Ohtsuka (1922--2007)}\vspace{8pt}

\footnotesize{\address{Institute of Mathematics, Academy of Sciences
of Ukraine, Tereshchenkivska~3, 01601,
Kyiv-4, Ukraine\\
natalia.zorii@gmail.com }}

\end{center}

\vspace{12pt}

{\footnotesize{\textbf{Abstract.}
The paper deals with the theory of inner (outer) capacities on locally compact spaces with respect to general function kernels,
the main emphasis being placed on the establishment of alternative  characterizations of inner (outer) capacities and inner (outer) capacitary measures for arbitrary sets. The analysis is substantially based on the close interaction between the theory of capacities and that of balayage. As a by-product, we provide a rigorous justification of Fuglede's theories of inner and outer capacitary measures and capacitability (Acta Math., 1960).
The results obtained are largely new even for the logarithmic, Newtonian, Green, $\alpha$-Riesz, and $\alpha$-Green kernels.
}}
\symbolfootnote[0]{\quad 2010 Mathematics Subject Classification: Primary 31C15.}
\symbolfootnote[0]{\quad Key words: Inner and outer capacities and capacitary measures, capacitability, inner and outer balayage, perfect kernels, first and second maximum principles, principle of positivity of mass.}

\vspace{6pt}

\markboth{\emph{Natalia Zorii}} {\emph{On the theory of capacities on locally compact spaces}}

\section{Introduction}

The classical theory of capacities of {\it compact\/} sets on $\mathbb R^n$, $n\geqslant2$, was  initiated by N.~Wiener \cite{NW}, and further developed by O.~Frostman \cite{Fr}, M.~Riesz \cite{Riesz}, and C.~de la Val\-l\'{e}e-Pou\-ssin \cite{V}. The main tool is here the {\it vague\/} topology, for the energy is vaguely lower semicontinuous
on positive measures, whereas the set of admissible measures in the principal minimum energy problem is vaguely compact.

The modern theory of inner and outer capacities of {\it arbitrary\/} sets was originated in the pioneering papers by H.~Cartan \cite{Ca1,Ca2}. His approach is mainly based on the discovery that the cone of all positive measures on $\mathbb R^n$, $n\geqslant3$, of finite Newtonian energy is {\it complete in the strong topology}, determined by the energy norm.

The systematic use of both the strong and the vague topologies enabled B.~Fuglede \cite{F1} to develop a theory of inner (outer) capacities and inner (outer) capacitary measures on a locally compact space $X$, generalizing Cartan's theory to quite a large class of positive definite kernels $\kappa$ on $X$ satisfying the principle of {\it consistency}.

Similar ideas were further explored by M.~Ohtsuka \cite{O} for {\it vec\-tor-val\-ued Radon measures\/} $\boldsymbol{\mu}$ on $X$, influenced by external fields; see also relevant papers
by the present author \cite{Z4}--\cite{ZPot3}, where the measures $\boldsymbol{\mu}$ were even allowed to be {\it infinite dimensional}.

In this work we continue the study of inner (outer) capacities and inner (outer) capacitary measures of sets $A\subset X$ for suitable kernels $\kappa$ on $X$, the main emphasis being placed on the establishment of alternative characterizations of these concepts for {\it arbitrary\/} $A$. (In the existing literature on this subject, mainly pertaining to the classical  kernels on $\mathbb R^n$, one usually imposed upon $A$ some topological restrictions, see e.g.\
\cite[p.~138]{AG}, \cite[p.~241--243]{Doob},
\cite[p.~138--139]{L}, \cite{V0,NW}.)

The alternative characterizations of inner capacities and inner capacitary measures thereby obtained are given in particular by Theorems~\ref{th-ap}, \ref{cor52}, \ref{cor3}, \ref{G0'ar}, \ref{th-char}, \ref{cor2alt} and Corollaries~\ref{cor52'}, \ref{cor-char}.
The analysis performed is substantially based on the close interaction between the theory of capacities and that of balayage.
(Concerning the theory of balayage on a locally compact space $X$ with respect to a suitable kernel~$\kappa$, see the author's recent paper \cite{Z-arx1}; see also Sects.~\ref{i1}, \ref{sec-bal-ou} below for further relevant results, in particular Theorems~\ref{th-intr}, \ref{th-bal-ou} for a number of alternative characterizations of inner and outer swept measures.)

We also investigate the convergence of inner capacities, inner capacitary measures, and their potentials
for monotone families of sets (Theorems~\ref{cor2}, \ref{th-cont2}, \ref{th-cont-bor2}).

Based on the theory of
inner capacities and inner capacitary measures thereby developed, we further carry out a similar analysis for outer capacities and outer capacitary measures.
See in particular alternative characterizations of outer capacities and outer capacitary measures described in Theorems~\ref{G0'-ou}--\ref{cor2alt-ou}, as well as convergence results for monotone families of sets presented in Theorems~\ref{cor2-ou} and \ref{th-cont-bor2-ou}.\footnote{In the investigation of these {\it outer\/} concepts we must however impose upon $X$ and $A$ certain topological restrictions, e.g.\ that the space $X$ be second-countable while the sets $A\subset X$ be Borel.}

To provide an example of the results obtained,\footnote{In this work we are only concerned with the characterizations of capacities expressed in potential theoretical terms, leaving aside those given in terms of transfinite diameters, Chebyshev constants, extremal lengths, or Dirichlet integrals of suitable functions (cf.\ e.g.\ \cite{Ch2,FN,F0,ST,O1,Omon,Z0}).}
consider a {\it perfect kernel\/ $\kappa$ satisfying the first and second maximum principles\/} (for definitions see Sect.~\ref{sec-1}), and let $\mathcal E^+$ stand for the cone of all positive (scalar) Radon measures $\mu$ on $X$ of finite energy: \[\kappa(\mu,\mu):=\int\kappa(x,y)\,d(\mu\otimes\mu)(x,y)=\int\kappa\mu\,d\mu<\infty,\]
$\kappa\mu(\cdot):=\int\kappa(\cdot,y)\,d\mu(y)$ being the potential of $\mu$. Given {\it arbitrary\/} $A\subset X$, we denote by $\mathcal E^+_A$ the cone of all $\mu\in\mathcal E^+$ concentrated on $A$, by $\mathcal E'_A$ the closure of $\mathcal E^+_A$ in the strong topology on $\mathcal E^+$, determined by the energy norm $\|\mu\|:=\sqrt{\kappa(\mu,\mu)}$, and by $\widehat{\mathcal E}_A'$ the class of all $\mu\in\mathcal E'_A$ with the property $\kappa\mu\leqslant1$ $\mu$-a.e.\ on $X$. If moreover the set $A$ is of {\it finite\/} inner capacity $c_*(A)$, there exists the unique inner equilibrium measure $\gamma_A$, introduced as the (unique) solution to the minimum energy problem over the class
\[\Gamma^+_A:=\bigl\{\mu\in\mathcal E^+:\ \kappa\mu\geqslant1\text{ \ n.e.\ on $A$}\bigr\},\]
where `n.e.\ on $A$' means that the inequality holds everywhere on $A$ except for $E\subset A$ with $c_*(E)=0$ (see Sect.~\ref{sec-2}).

\begin{theorem}[{\rm see Sect.~\ref{sec-cr}}]\label{th-ex} The inner equilibrium measure\/ $\gamma_A$ can alternatively be characterized by means of any of the following\/ {\rm(}equivalent\/{\rm)} assertions\/ {\rm(a)--(f)}:
\begin{itemize}\item[{\rm(a)}]$\gamma_A$ is the unique measure of minimum potential in the class\/ $\Gamma^+_A$, that is, $\gamma_A\in\Gamma^+_A$ and
\[\kappa\gamma_A=\min_{\mu\in\Gamma^+_A}\kappa\mu\text{ \ on\/ $X$.}\]
\item[{\rm(b)}]$\gamma_A$ is the unique measure maximizing\/ $G(\mu):=2\mu(X)-\|\mu\|^2$ over\/ $\mathcal E_A'$, that is, $\gamma_A\in\mathcal E_A'$ and
\[G(\gamma_A)=\max_{\mu\in\mathcal E'_A}\,G(\mu)\quad\bigl({}=c_*(A)\bigr).\]
\item[{\rm(c)}]$\gamma_A$ is the unique measure maximizing\/ $\mu(X)$ over\/ $\widehat{\mathcal E}_A'$, that is, $\gamma_A\in\widehat{\mathcal E}_A'$ and
\[\gamma_A(X)=\max_{\mu\in\widehat{\mathcal E}'_A}\,\mu(X)\quad\bigl({}=c_*(A)\bigr).\]
\item[{\rm(d)}]$\gamma_A$ is the unique measure in\/ $\mathcal E'_A$ satisfying the two inequalities
    \[\gamma_A(X)\geqslant\|\gamma_A\|^2\geqslant c_*(A).\]
\item[{\rm(e)}]$\gamma_A$ is the unique measure in\/ $\mathcal E'_A$ having the property\/\footnote{It follows that there is in general no $\nu\in\mathcal E^+_A$ with $\kappa\nu=1$ n.e.\ on $A$ (unless of course $\mathcal E^+_A$ is strongly closed, cf.\ Remark~\ref{rem2-intr}). Indeed, let $A:=B_r:=\{|x|<r\}\subset\mathbb R^n$, where $n\geqslant3$ and $r\in(0,\infty)$, and let $\kappa(x,y)$ be the Newtonian kernel $|x-y|^{2-n}$. If there were $\nu\in\mathcal E^+_{B_r}\subset\mathcal E'_{B_r}$ with $\kappa\nu=1$ n.e.\ on $B_r$, then, according to (e), $\nu$ would necessarily serve as $\gamma_{B_r}$. But $\gamma_{B_r}$ is the positive measure of total mass $r^{n-2}$, uniformly distributed over the sphere $\{|x|=r\}$; therefore, $\gamma_{B_r}\not\in\mathcal E^+_{B_r}$. Contradiction.\label{Foot}}
    \[\kappa\gamma_A=1\text{ \ n.e.\ on\/ $A$.}\]
\item[{\rm(f)}]$\gamma_A$ is the unique measure in\/ $\mathcal E^+$ satisfying any of the three limit relations
    \begin{align*}&\gamma_K\to\gamma_A\text{ \ strongly in\/ $\mathcal E^+$},\\
    &\gamma_K\to\gamma_A\text{ \ vaguely in\/ $\mathcal E^+$},\\
    &\kappa\gamma_K\uparrow\kappa\gamma_A\text{ \ pointwise on\/ $X$},\end{align*}
    where\/ $K$ ranges through the upward directed set\/ $\mathfrak C_A$ of all compact subsets of\/ $A$, whereas\/ $\gamma_K$ denotes the only measure in\/ $\mathcal E^+_K$ with\/ $\kappa\gamma_K=1$ n.e.\ on\/ $K$.
\end{itemize}\end{theorem}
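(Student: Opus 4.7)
The plan is to take as given the existence and uniqueness of $\gamma_A$ as the energy minimizer over $\Gamma^+_A$ and to derive each of (a)--(f) from this together with the maximum principles satisfied by $\kappa$. I would establish (e) first. From $\gamma_A\in\Gamma^+_A$ one has $\kappa\gamma_A\geqslant1$ n.e.\ on $A$, while the standard variational inequality for an energy minimizer under a pointwise lower bound gives $\kappa\gamma_A\leqslant1$ $\gamma_A$-a.e.; combined with $\gamma_A\in\mathcal E'_A$ being a strong-topology limit of measures concentrated on $A$ and the fact that finite-energy measures do not charge sets of inner capacity zero, this yields $\kappa\gamma_A=1$ n.e.\ on $A$.

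Having (e), the pivotal observation is the bilinear identity
\[
\int\kappa\gamma_A\,d\mu=\mu(X)\quad\text{for all }\mu\in\mathcal E'_A,
\]
obtained by integrating $\kappa\gamma_A=1$ n.e.\ on $A$ against any $\mu\in\mathcal E^+_A$ and extending by strong continuity of the bilinear form. Applied at $\mu=\gamma_A$ this gives $\gamma_A(X)=\|\gamma_A\|^2=c_*(A)$, and the quadratic expansion $\|\mu-\gamma_A\|^2=\|\gamma_A\|^2-G(\mu)$ is then precisely (b). For (c), any $\mu\in\widehat{\mathcal E}'_A$ satisfies $\|\mu\|^2\leqslant\mu(X)$ from $\kappa\mu\leqslant1$ $\mu$-a.e., while Cauchy--Schwarz applied to the identity yields $\mu(X)\leqslant\|\mu\|\,\|\gamma_A\|$; multiplying these gives $\mu(X)\leqslant c_*(A)$, with equality forcing $\mu=\gamma_A$. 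Assertion (d) follows because its hypotheses imply $G(\mu)\geqslant\mu(X)\geqslant c_*(A)=\max G$, so $\mu=\gamma_A$ by (b). The uniqueness in (e) drops out similarly: a second $\nu\in\mathcal E'_A$ with $\kappa\nu=1$ n.e.\ on $A$ satisfies its own version of the bilinear identity, and symmetrising yields $\nu(X)=\gamma_A(X)$ and then $\|\gamma_A-\nu\|^2=0$.

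The delicate assertion is (a). For $\mu\in\Gamma^+_A$ one has $\kappa\mu\geqslant1=\kappa\gamma_A$ n.e.\ on $A$, and the problem is to propagate this inequality to all of $X$. I would deduce it from the domination principle built into the combination of the first and second maximum principles, routed through the balayage framework of Theorem~\ref{th-intr}, which would link $\gamma_A$ with an inner balayage construction on $A$. The main technical obstacle I anticipate is precisely here: reconciling the n.e.\ exceptional set on $A$ (which need not be $\gamma_A$-negligible, since $\gamma_A$ may live only on $\overline A$, cf.\ Footnote~\ref{Foot}) with a pointwise domination everywhere on $X$, and extending the domination principle from measures in $\mathcal E^+_A$ to their strong limits in $\mathcal E'_A$ in the bare locally compact setting. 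Uniqueness of a minimum-potential measure in $\Gamma^+_A$ is then automatic, because pointwise minimality implies energy minimality, singling out $\gamma_A$.

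Finally for (f), the monotonicity $c_*(K)\uparrow c_*(A)$ as $K$ ranges over $\mathfrak C_A$, combined with $\int\kappa\gamma_{K'}\,d\gamma_K=\gamma_K(X)=c_*(K)$ for $K\subset K'$ (using $\kappa\gamma_{K'}=1$ n.e.\ on $K'\supset K$), gives $\|\gamma_{K'}-\gamma_K\|^2=c_*(K')-c_*(K)$, so $(\gamma_K)_{K\in\mathfrak C_A}$ is a strong Cauchy net in $\mathcal E^+$. Perfectness of $\kappa$ furnishes a strong limit $\gamma\in\mathcal E'_A$; a passage to the limit in $\kappa\gamma_K=1$ n.e.\ on $K$ confirms $\kappa\gamma=1$ n.e.\ on $A$, so $\gamma=\gamma_A$ by the uniqueness in (e). The vague and pointwise monotone convergences follow, respectively, from the strong-to-vague continuity enjoyed by perfect kernels and from the pointwise monotonicity $\kappa\gamma_K\leqslant\kappa\gamma_{K'}\leqslant\kappa\gamma_A$, itself an instance of (a) applied to compact subsets of $A$.
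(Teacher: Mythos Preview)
Your overall plan is sound and in several places more direct than the paper's. The paper proceeds by a single organizing observation: since $\kappa\gamma_A=1$ n.e.\ on $A$ (from Frostman's principle), one has $\Gamma^+_A=\Gamma^+_{A,\gamma_A}$, whence $\gamma_A=(\gamma_A)^A$ --- the equilibrium measure is its own inner balayage (Lemma~\ref{prop.1.1}). All of (a)--(f) are then read off from the corresponding clauses (a)--(d) of the balayage Theorem~\ref{th-intr}. In particular the paper never isolates your bilinear identity $\langle\gamma_A,\mu\rangle=\mu(X)$ on $\mathcal E'_A$; instead it gets (b) from the orthogonal-projection form of balayage and the inequality $\kappa\gamma_A\leqslant1$, and (e) from Theorem~\ref{th-intr}(c). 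Your route through the exact identity is more elementary for (b)--(e) and gives the clean formula $\|\mu-\gamma_A\|^2=c_*(A)-G(\mu)$ directly; the paper's route has the advantage that (a) falls out for free once $\gamma_A=(\gamma_A)^A$ is known, without the separate ``delicate'' step you flag.

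There is one point to tighten. Your extension of $\langle\gamma_A,\mu\rangle=\mu(X)$ from $\mathcal E^+_A$ to $\mathcal E'_A$ ``by strong continuity of the bilinear form'' is incomplete as stated: the left side is strongly continuous in $\mu$, but $\mu\mapsto\mu(X)$ is only vaguely l.s.c., not strongly continuous. The correct argument is that for $\mu_j\in\mathcal E^+_A$ converging strongly (hence vaguely) to $\mu_0\in\mathcal E'_A$, the equalities $\mu_j(X)=\langle\gamma_A,\mu_j\rangle$ show $\mu_j(X)\to\langle\gamma_A,\mu_0\rangle$; then $\kappa\gamma_A\leqslant1$ on $X$ gives $\langle\gamma_A,\mu_0\rangle\leqslant\mu_0(X)$, while vague lower semicontinuity gives $\mu_0(X)\leqslant\liminf\mu_j(X)$, forcing equality. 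With this in hand your arguments for (b)--(e) go through. For (a), you are right that the balayage framework is the way in; concretely, the missing link you anticipate is precisely the identity $\gamma_A=(\gamma_A)^A$, after which Theorem~\ref{th-intr}(a) yields the pointwise minimality on all of $X$ despite the n.e.\ exceptional set on $A$. Your Cauchy-net argument for (f) is correct and matches the paper's Corollary~\ref{prop.1.11}; the paper's second proof of (f) again just quotes Theorem~\ref{th-intr}(d) via $\gamma_A=(\gamma_A)^A$.
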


\begin{remark}Being valid for {\it arbitrary\/} $A\subset X$, Theorem~\ref{th-ex} seems to be largely new even for the Green kernels associated with the Laplacian on Greenian sets in $\mathbb R^n$, $n\geqslant2$ (in particular, for the Newtonian kernel on $\mathbb R^n$, $n\geqslant3$), as well as for the $\alpha$-Riesz $|x-y|^{\alpha-n}$ and the associated $\alpha$-Green kernels on $\mathbb R^n$, where $\alpha<2\leqslant n$.\end{remark}

\begin{remark}\label{rem2-intr}In the particular case where $\mathcal E^+_A$ is {\it strongly closed\/} (which occurs e.g.\ if the set $A$ is closed or even quasiclosed, cf.\ Lemma~\ref{l-quasi}), (b)--(e) in Theorem~\ref{th-ex} remain valid with $\mathcal E'_A$ replaced by $\mathcal E^+_A$ throughout. Moreover, such refined characterizations of $\gamma_A$ actually hold true even for {\it any perfect\/} kernel $\kappa$, except for (e) where $\kappa$ is additionally required to satisfy Frostman's maximum principle (cf.\ Theorem~\ref{cor3}).\end{remark}

\begin{remark}If moreover $X$ is sec\-ond-count\-able (or, more generally, $\sigma$-compact and perfectly normal) while $A$ is Borel, then Theorem~\ref{th-ex} still holds with the inner concepts replaced by the {\it outer\/} concepts throughout (for details see Sect.~\ref{sec-ou-chara}).\end{remark}

\section{On the theory of potentials on locally compact spaces}\label{sec-1}

In this section we review some basic facts of the theory of potentials on a locally compact (Hausdorff) space $X$, to be used throughout the paper. We denote by $\mathfrak M$
the linear space of all (real-valued scalar Radon) measures $\mu$ on $X$ equipped with the {\it vague\/} topology of pointwise convergence on the class $C_0(X)$
of all continuous functions $f:X\to\mathbb R$ of compact support,
and by $\mathfrak M^+$ the cone of all positive $\mu\in\mathfrak M$.

\begin{lemma}[{\rm see \cite[Section~IV.1, Proposition~4]{B2}}]\label{lemma-semi}For any lower semicontinuous\/ {\rm(}l.s.c.{\rm)}\  function\/ $\psi:X\to[0,\infty]$, the mapping\/ $\mu\mapsto\int\psi\,d\mu$ is
vaguely l.s.c.\ on\/ $\mathfrak M^+$, the integral here being understood as an upper integral.\end{lemma}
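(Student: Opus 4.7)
The plan is to realize $\mu\mapsto\int\psi\,d\mu$ as a pointwise supremum, over $\mathfrak M^+$, of a family of vaguely continuous linear functionals; lower semicontinuity will then be automatic, since a pointwise supremum of continuous functions is l.s.c.

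The first ingredient is the standard representation of a non-negative l.s.c.\ function on a locally compact Hausdorff space $X$ as the upward directed supremum of its continuous compactly supported minorants:
\[\psi(x)=\sup\bigl\{f(x):f\in C_0^+(X),\ f\leqslant\psi\bigr\}\text{ \ for every }x\in X,\]
where $C_0^+(X)$ denotes the non-negative elements of $C_0(X)$. I would prove this by a routine Urysohn argument: given $x_0\in X$ and $t<\psi(x_0)$, lower semicontinuity makes $U:=\{\psi>t\}$ an open neighbourhood of $x_0$, local compactness furnishes a relatively compact open $V\ni x_0$ with $\overline V\subset U$, and Urysohn's lemma for locally compact Hausdorff spaces then produces $g\in C_0^+(X)$ with $0\leqslant g\leqslant1$, $g(x_0)=1$, and $\operatorname{supp}g\subset V$; the function $f:=tg$ lies in $C_0^+(X)$, satisfies $f(x_0)=t$, and verifies $f\leqslant\psi$ globally (since $f=0$ off $\operatorname{supp}g$ and $f\leqslant t<\psi$ on $\overline V\subset U$).

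The second ingredient is Bourbaki's definition of the upper integral of a non-negative l.s.c.\ function with respect to $\mu\in\mathfrak M^+$, namely
\[\int\psi\,d\mu=\sup\Bigl\{\int f\,d\mu:f\in C_0^+(X),\ f\leqslant\psi\Bigr\}.\]
Combining this with the first step, the functional $\mu\mapsto\int\psi\,d\mu$ is the pointwise supremum of the family of maps $\mu\mapsto\int f\,d\mu$ indexed by $f\in C_0^+(X)$ with $f\leqslant\psi$. Each such map is vaguely continuous on $\mathfrak M^+$ by the very definition of the vague topology (since $f\in C_0(X)$), and hence the supremum is vaguely l.s.c., as claimed.

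The main obstacle here is entirely conceptual rather than technical: one has to be sure that the ``upper integral'' in the statement is understood in Bourbaki's sense as a supremum over continuous compactly supported minorants. Once this identification is in hand, the argument reduces to a standard Urysohn construction together with the tautology that pointwise suprema preserve lower semicontinuity, with no further analytic input required.
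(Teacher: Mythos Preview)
Your argument is correct and is essentially the standard proof (indeed, it is the argument behind Bourbaki's Proposition~IV.1.4). Note, however, that the paper does not supply its own proof of this lemma at all: it is stated with a bare citation to \cite[Section~IV.1, Proposition~4]{B2} and used as a black box. So there is nothing in the paper to compare against beyond the reference, and your write-up simply unpacks that reference.

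One cosmetic remark: in your Urysohn step you write $f:=tg$ with $t<\psi(x_0)$; since $\psi\geqslant0$, it is harmless (and slightly cleaner) to restrict to $0\leqslant t<\psi(x_0)$ so that $f\in C_0^+(X)$ is immediate.
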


For any set $A\subset X$, denote by $\mathfrak M^+_A$ the cone of all $\mu\in\mathfrak M^+$ {\it concentrated on\/}
$A$, which means that $A^c:=X\setminus A$ is locally $\mu$-neg\-lig\-ible, or equivalently that $A$ is $\mu$-meas\-ur\-able and $\mu=\mu|_A$, $\mu|_A:=1_A\cdot\mu$ being the {\it trace\/} of $\mu$ to $A$ \cite[Section~IV.14.7]{E2}. (Note that for $\mu\in\mathfrak M^+_A$, the indicator function $1_A$ of $A$ is locally $\mu$-int\-egr\-able.) The total mass of $\mu\in\mathfrak M^+_A$ is $\mu(X)=\mu_*(A)$, $\mu_*(A)$ and $\mu^*(A)$ denoting the {\it inner\/} and the {\it outer\/} measure of $A$, respectively. If moreover $A$ is closed, or if $A^c$ is contained in a countable union of sets $Q_j$ with $\mu^*(Q_j)<\infty$,\footnote{If the latter holds, $A^c$ is said to be $\mu$-$\sigma$-{\it finite\/} \cite[Section~IV.7.3]{E2}. This in particular occurs if the measure $\mu$ is {\it bounded\/} (that is, with $\mu(X)<\infty$), or if the locally compact space $X$ is {\it $\sigma$-com\-pact\/} (that is, representable as a countable union of compact sets \cite[Section~I.9, Definition~5]{B1}).} then for any $\mu\in\mathfrak M^+_A$, $A^c$ is $\mu$-neg\-lig\-ible, that is, $\mu^*(A^c)=0$. In particular, if $A$ is closed, $\mathfrak M^+_A$ consists of all $\mu\in\mathfrak M^+$ with support $S(\mu)\subset A$, cf.\ \cite[Section~III.2.2]{B2}.

Given $A\subset X$, denote by $\mathfrak C_A$ the upward directed set of all compact subsets $K$ of $A$, where $K_1\leqslant K_2$ if and only if $K_1\subset K_2$. If a net $(x_K)_{K\in\mathfrak C_A}\subset Y$ converges to $x_0\in Y$, $Y$ being a topological space, then we shall indicate this fact by writing
\begin{equation*}\label{abr}x_K\to x_0\text{ \ in $Y$ as $K\uparrow A$}.\end{equation*}

\begin{lemma}[{\rm cf.\ \cite[Lemma~1.2.2]{F1}}]\label{l-lower}For any l.s.c.\ function\/ $\psi:X\to[0,\infty]$, any measure\/ $\mu\in\mathfrak M^+$, and any\/ $\mu$-measurable set\/ $A\subset X$,
\[\int\psi\,d\mu|_A=\lim_{K\uparrow A}\,\int\psi\,d\mu|_K.\]
\end{lemma}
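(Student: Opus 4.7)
The plan is to establish the two inequalities separately, noting first that the net $\bigl(\int\psi\,d\mu|_K\bigr)_{K\in\mathfrak C_A}$ is monotone increasing in $K$: since $K_1\subset K_2\subset A$ entails $\mu|_{K_1}\leqslant\mu|_{K_2}$, the limit exists and equals $\sup_{K\in\mathfrak C_A}\int\psi\,d\mu|_K$. The easy direction $\int\psi\,d\mu|_A\geqslant\sup_{K\in\mathfrak C_A}\int\psi\,d\mu|_K$ is immediate from the monotonicity of the upper integral applied to $\mu|_K\leqslant\mu|_A$.

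For the reverse inequality, I would invoke Bourbaki's defining property of the upper integral of a nonnegative l.s.c.\ function: for every $\nu\in\mathfrak M^+$,
\[\int\psi\,d\nu=\sup\Bigl\{\int f\,d\nu:\,f\in C_0(X),\ 0\leqslant f\leqslant\psi\Bigr\},\]
which is precisely the sense in which the integrals in Lemma~\ref{lemma-semi} are understood. This reduces matters to verifying, for each fixed $f\in C_0(X)$ with $0\leqslant f\leqslant\psi$, the identity
\[\int f\,d\mu|_A=\sup_{K\in\mathfrak C_A}\int f\,d\mu|_K,\]
after which the lemma follows by taking the supremum over such $f$ and interchanging the two suprema (an elementary operation valid for any double supremum, the inner one then being recognized as $\int\psi\,d\mu|_K$).

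For a fixed such $f$, I would appeal to the inner regularity of the bounded Radon measure $f\cdot\mu$: since $f$ is continuous, nonnegative, and of compact support, $f\cdot\mu$ is a bounded Radon measure on $X$, and inner regularity applied to the $\mu$-measurable (hence also $(f\cdot\mu)$-measurable) set $A$ of finite $(f\cdot\mu)$-mass gives
\[\int 1_A f\,d\mu=(f\cdot\mu)(A)=\sup_{K\in\mathfrak C_A}(f\cdot\mu)(K)=\sup_{K\in\mathfrak C_A}\int 1_K f\,d\mu,\]
which is the desired equality since $\int f\,d\mu|_B=\int 1_B f\,d\mu$ for any $\mu$-measurable $B$. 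The only step requiring care is this inner regularity argument, a standard property of bounded Radon measures on a locally compact space; once it is in hand, the remaining manipulations---the resolution of $\psi$ into continuous minorants of compact support and the interchange of suprema---are routine.
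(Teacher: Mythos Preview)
Your argument is correct. The monotone net has limit equal to the supremum; the easy inequality follows from $\mu|_K\leqslant\mu|_A$; and for the reverse you correctly invoke the Bourbaki characterization $\int\psi\,d\nu=\sup\bigl\{\int f\,d\nu:\ f\in C_0^+(X),\ f\leqslant\psi\bigr\}$ for l.s.c.\ $\psi\geqslant0$, reduce to a fixed $f\in C_0^+(X)$, and appeal to inner regularity of the bounded measure $f\cdot\mu$ on the $(f\cdot\mu)$-measurable set $A$ (which is $(f\cdot\mu)$-measurable because $f$ is continuous and $A$ is $\mu$-measurable). The interchange of suprema is unproblematic.

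As for comparison: the paper does not supply its own proof of this lemma; it simply records the statement with a ``cf.''\ reference to \cite[Lemma~1.2.2]{F1}, treating it as essentially known from Fuglede's paper. Your write-up therefore fills in what the paper leaves to the citation, and it does so along the standard Bourbaki lines one would expect (resolve the l.s.c.\ integrand into continuous compactly supported minorants, then use inner regularity of Radon measures). There is nothing to correct.
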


Following Fuglede \cite{F1}, a {\it kernel\/} $\kappa$ on $X$ is meant to be a symmetric, l.s.c.\ function $\kappa:X\times X\to(-\infty,\infty]$ that is positive (${}\geqslant0$) unless the space $X$ is compact.\footnote{The latter case can often be reduced to the former by introducing a kernel $\kappa':=\kappa+q\geqslant0$, where $q\in\mathbb R$, which is always possible, for a l.s.c.\ function on a compact space is lower bounded.\label{fIII}}
Given (signed) measures $\mu,\nu\in\mathfrak M$, define the {\it potential\/} and the {\it mutual energy\/}  by
\begin{align*}\kappa\mu(x)&:=\int\kappa(x,y)\,d\mu(y),\quad x\in X,\\
\kappa(\mu,\nu)&:=\int\kappa(x,y)\,d(\mu\otimes\nu)(x,y),
\end{align*}
respectively, provided the right-hand side is well defined as a finite number or $\pm\infty$ (for more details see \cite[Section~2.1]{F1}). For $\mu=\nu$, $\kappa(\mu,\nu)$ defines the {\it energy\/} $\kappa(\mu,\mu)$ of $\mu$.
In particular, if the measures are positive, then $\kappa\mu(x)$, resp.\ $\kappa(\mu,\nu)$, is always well defined and represents a l.s.c.\ function of $(x,\mu)\in X\times\mathfrak M^+$, resp.\ of $(\mu,\nu)\in\mathfrak M^+\times\mathfrak M^+$ (the {\it principle of descent\/} \cite[Lemma~2.2.1]{F1}, cf.\ Lemma~\ref{lemma-semi} and footnote~\ref{fIII}).

\begin{lemma}\label{222} For any\/ $A\subset X$ and any\/ $\mu\in\mathfrak M^+_A$,
\begin{equation*}\label{emon}\kappa(\mu,\mu)=\lim_{K\uparrow A}\,\kappa(\mu|_K,\mu|_K).\end{equation*}
\end{lemma}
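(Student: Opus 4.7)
The plan is to reduce the double integral defining $\kappa(\mu,\mu)$ to two nested applications of Lemma~\ref{l-lower}, one in each variable; the rest is monotonicity.

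First I would reduce to the case $\kappa\geqslant0$. When $X$ is compact the l.s.c.\ kernel is bounded below, so for a suitable constant $q\in\mathbb R$ the shifted kernel $\kappa':=\kappa+q\geqslant0$ (cf.\ footnote~\ref{fIII}) satisfies $\kappa'(\nu,\nu)=\kappa(\nu,\nu)+q\,\nu(X)^2$ for every bounded $\nu\in\mathfrak M^+$. Since $\mu(X)<\infty$ on a compact $X$ and $\mu|_K(X)\to\mu(X)$ as $K\uparrow A$ by Lemma~\ref{l-lower} applied to $\psi\equiv1$, the assertion for $\kappa$ is equivalent to the one for $\kappa'$. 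Henceforth I assume $\kappa\geqslant0$.

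Next, by Tonelli (applicable since $\kappa\geqslant0$) together with $\mu=\mu|_A$, I have
\[\kappa(\mu,\mu)=\int\kappa\mu\,d\mu|_A,\qquad\kappa(\mu|_L,\mu|_K)=\int\kappa(\mu|_L)\,d\mu|_K\quad\bigl(L,K\in\mathfrak C_A\bigr).\]
Both potentials $\kappa\mu$ and $\kappa(\mu|_L)$ are l.s.c.\ on $X$ (principle of descent, cf.\ Lemma~\ref{lemma-semi}). Applying Lemma~\ref{l-lower} first to $\psi:=\kappa\mu$ and then, for each fixed $L$, to $\psi:=\kappa(\mu|_L)$, and using the symmetry of $\kappa$, I obtain
\[\kappa(\mu,\mu)=\lim_{L\uparrow A}\kappa(\mu,\mu|_L)=\lim_{L\uparrow A}\Bigl(\lim_{K\uparrow A}\kappa(\mu|_L,\mu|_K)\Bigr).\]

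To identify this iterated limit with $\lim_{K\uparrow A}\kappa(\mu|_K,\mu|_K)$ I would invoke monotonicity: because $\mu|_{K_1}\leqslant\mu|_{K_2}$ whenever $K_1\subset K_2$ and $\kappa\geqslant0$, the net $\bigl(\kappa(\mu|_K,\mu|_K)\bigr)_{K\in\mathfrak C_A}$ is increasing, and its limit equals $\sup_K\kappa(\mu|_K,\mu|_K)$. The bound $\mu|_K\leqslant\mu$ gives $\kappa(\mu|_K,\mu|_K)\leqslant\kappa(\mu,\mu)$, hence $\sup_K\kappa(\mu|_K,\mu|_K)\leqslant\kappa(\mu,\mu)$. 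Conversely, for $L\subset K$ one has $\kappa(\mu|_L,\mu|_K)\leqslant\kappa(\mu|_K,\mu|_K)\leqslant\sup_K\kappa(\mu|_K,\mu|_K)$; passing first to the limit in $K$ (for fixed $L$) and then in $L$, and using the double-limit display above, yields the reverse inequality $\kappa(\mu,\mu)\leqslant\sup_K\kappa(\mu|_K,\mu|_K)$. The two inequalities close the squeeze. The only point needing care is the iteration of Lemma~\ref{l-lower} in the two variables and the subsequent interchange with $\sup_K$; since all quantities are nonnegative and the underlying nets are monotone, this is handled painlessly by monotone convergence, and I expect no conceptual obstacle beyond this bookkeeping.
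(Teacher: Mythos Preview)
Your argument is correct. The reduction to $\kappa\geqslant0$ via the shift $\kappa':=\kappa+q$ is exactly what the paper does for compact $X$, and in the positive case your two nested applications of Lemma~\ref{l-lower} together with the monotonicity squeeze are sound: the net $\bigl(\kappa(\mu|_K,\mu|_K)\bigr)_K$ is increasing and bounded above by $\kappa(\mu,\mu)$, while the iterated limit forces the reverse inequality.

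The paper's proof, however, is a one-liner in the positive case: it applies Lemma~\ref{l-lower} \emph{once}, directly on the product space $X\times X$ with the l.s.c.\ function $\kappa$, the measure $\mu\otimes\mu$, and the $\mu\otimes\mu$-measurable set $A\times A$, noting that $(\mu\otimes\mu)|_{K\times K}=\mu|_K\otimes\mu|_K$ and that the products $K\times K$ are cofinal among compact subsets of $A\times A$. This avoids Tonelli, the iterated limit, and the final squeeze altogether. Your route is perfectly valid and perhaps more transparent if one hesitates to invoke Lemma~\ref{l-lower} on a space other than $X$ itself, but the paper's product-space trick is shorter and worth noting as an alternative.
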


\begin{proof}
 If $\kappa\geqslant0$, this follows from Lemma~\ref{l-lower} noting that $(\mu\otimes\mu)|_{K\times K}=\mu|_K\otimes\mu|_K$, whereas the remaining case of compact $X$ is treated as described in footnote~\ref{fIII}.
\end{proof}

From now on we shall always assume that a kernel $\kappa$ is {\it pseudo-positive\/} \cite[p.~150]{F1}, which means that $\kappa(\mu,\mu)\geqslant0$ for all $\mu\in\mathfrak M^+$ (or equivalently, for all $\mu\in\mathfrak M^+$ of compact support, cf.\ Lemma~\ref{222}). A (pse\-udo-pos\-it\-ive) kernel $\kappa$ is said to be {\it strictly pse\-udo-pos\-itive\/} if $\kappa(\mu,\mu)>0$ for all $\mu\in\mathfrak M^+$, $\mu\ne0$, of compact support.

Denote by $\mathcal E$ the set of all (signed) $\mu\in\mathfrak M$ of finite energy $\kappa(\mu,\mu)$, and by $\mathcal E^+$
its subset consisting of all positive $\mu\in\mathcal E$. Given $A\subset X$, also denote
\[\mathcal E^+_A:=\mathcal E\cap\mathfrak M^+_A,\quad\breve{\mathcal E}^+_A:=\bigl\{\mu\in\mathcal E^+_A: \ \mu(X)=1\bigr\},\]
and\footnote{As usual, the infimum over the empty set is taken to be $+\infty$. We also agree that $1/(+\infty)=0$ and $1/0 = +\infty$.\label{f1}}
\begin{equation}w(A):=\inf_{\mu\in\breve{\mathcal E}^+_A}\,\kappa(\mu,\mu)\in[0,\infty].\label{W}\end{equation}
The value \[c_*(A):=1/w(A)\in[0,\infty]\] is said to be the (Wiener) {\it inner capacity\/} of the set $A$ (with respect to the kernel $\kappa$).

It is seen from Lemma~\ref{222} that $c_*(A)$ would be the same if the admissible measures in (\ref{W})
were required to be of compact support. This in turn implies that\footnote{We write $c(A)$ in place of $c_*(A)$ if $A$ is {\it capacitable}, that is, if $c_*(A)=c^*(A)$, where $c^*(A)$ is the {\it outer\/} capacity of $A$, defined as $\inf\,c_*(D)$, $D$ ranging over all open sets containing $A$. This occurs, for instance, if $A$ is compact \cite[Lemma~2.3.4]{F1} or open.}
\begin{equation}\label{153}c_*(A)=\lim_{K\uparrow A}\,c(K).\end{equation}

\begin{lemma}[{\rm cf.\ \cite[Lemma~2.3.1]{F1}}]\label{l-negl1} For any\/ $A\subset X$, \[c_*(A)=0\iff\mathcal E^+_A=\{0\}\iff\mathcal E^+_K=\{0\}\text{\ for all\/ $K\in\mathfrak C_A$}.\]
\end{lemma}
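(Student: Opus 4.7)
I would split the assertion into the two equivalences (A) $c_*(A)=0\iff\mathcal E^+_K=\{0\}$ for every $K\in\mathfrak C_A$, and (B) $\mathcal E^+_A=\{0\}\iff\mathcal E^+_K=\{0\}$ for every $K\in\mathfrak C_A$, which together yield the claim. Part~(A) should follow from the limit identity~(\ref{153}) together with the easier compact-set case of the lemma, and part~(B) from approximating an arbitrary $\mu\in\mathcal E^+_A$ by its compact traces $\mu|_K$ via Lemmas~\ref{222} and~\ref{l-lower}.

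For~(A), the monotone non-decreasing net $K\mapsto c(K)$ on $\mathfrak C_A$ converges to $c_*(A)$ by~(\ref{153}), so $c_*(A)=0\iff c(K)=0$ for every $K\in\mathfrak C_A$; it therefore suffices to establish the compact-set version $c(K)=0\iff\mathcal E^+_K=\{0\}$. The forward direction is trivial ($\mathcal E^+_K=\{0\}$ forces $\breve{\mathcal E}^+_K=\varnothing$, whence $w(K)=+\infty$ and $c(K)=0$). Conversely, any non-zero $\mu\in\mathcal E^+_K$ has $0<\mu(X)=\mu(K)<\infty$ by compactness of $K$, so $\mu/\mu(X)\in\breve{\mathcal E}^+_K$ and $c(K)\geqslant\mu(X)^2/\kappa(\mu,\mu)>0$.

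For~(B), the implication $\mathcal E^+_A=\{0\}\Rightarrow\mathcal E^+_K=\{0\}$ whenever $K\subset A$ is immediate, since any measure concentrated on $K$ is also concentrated on $A$. For the converse, fix $\mu\in\mathcal E^+_A$; Lemma~\ref{222} ensures $\kappa(\mu|_K,\mu|_K)\leqslant\kappa(\mu,\mu)<\infty$ (the net being non-decreasing in $K$ when $\kappa\geqslant0$), so each trace $\mu|_K$ lies in $\mathcal E^+_K=\{0\}$, giving $\mu(K)=0$ for every $K\in\mathfrak C_A$. Applying Lemma~\ref{l-lower} to the constant l.s.c.\ function $\psi\equiv1$ then yields $\mu(X)=\lim_{K\uparrow A}\mu(K)=0$, so $\mu=0$.

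The argument is largely routine; the only subtlety I anticipate is that $\kappa$ need not be non-negative when $X$ is compact, which could jeopardize the monotonicity of $K\mapsto\kappa(\mu|_K,\mu|_K)$ used above. I would resolve this by the standard shift $\kappa\mapsto\kappa+q\geqslant0$ of footnote~\ref{fIII}: on a compact space every Radon measure is bounded, so the shift affects neither membership in $\mathcal E^+_A$ nor the vanishing of $c_*(A)$, and we may assume $\kappa\geqslant0$ throughout.
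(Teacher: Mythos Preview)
The paper does not supply its own proof of this lemma; it merely cites \cite[Lemma~2.3.1]{F1}. Your argument is a correct self-contained verification using only the tools already available in Section~\ref{sec-1} (formula~(\ref{153}), Lemmas~\ref{l-lower} and~\ref{222}, and the shift trick of footnote~\ref{fIII}), so there is nothing to compare against. One cosmetic point: in the compact-set step of part~(A) you write $c(K)\geqslant\mu(X)^2/\kappa(\mu,\mu)>0$, which tacitly assumes $\kappa(\mu,\mu)>0$; under the paper's convention $1/0=+\infty$ the inequality still gives $c(K)>0$ when $\kappa(\mu,\mu)=0$, but it would be cleaner to remark that this degenerate case yields $w(K)=0$ and hence $c(K)=+\infty$ directly.
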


\begin{lemma}\label{l-negl}Given\/ $\mu\in\mathcal E^+$, let\/ $A\subset X$ be a\/ $\mu$-mea\-sur\-ab\-le and\/ $\mu$-$\sigma$-fi\-ni\-te set with\/ $c_*(A)=0$. Then\/ $A$ is\/ $\mu$-neg\-lig\-ible.\end{lemma}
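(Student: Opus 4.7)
The plan is to reduce the claim to Lemma~\ref{l-negl1} by exhibiting the trace $\mu|_A$ as an element of $\mathcal E^+_A$, and then to upgrade ``$\mu|_A=0$'' from local to global $\mu$-negligibility of $A$ using the $\mu$-$\sigma$-finiteness hypothesis.

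First, since $A$ is $\mu$-measurable, the indicator $1_A$ is locally $\mu$-integrable, so the trace $\mu|_A:=1_A\cdot\mu$ is a well-defined positive Radon measure concentrated on $A$. Next, I would verify that $\mu|_A$ has finite energy, hence belongs to $\mathcal E^+_A$. In the generic case $\kappa\geqslant0$, the pointwise bound $\mu|_A\leqslant\mu$ yields, by monotonicity of integration of the nonnegative function $\kappa$ on $X\times X$,
\[\kappa(\mu|_A,\mu|_A)\leqslant\kappa(\mu,\mu)<\infty.\]
In the remaining case where $X$ is compact, every positive Radon measure is bounded, so passing to the shifted kernel $\kappa':=\kappa+q\geqslant0$ (cf.\ footnote~\ref{fIII}) reduces to the previous estimate and gives $\kappa(\mu|_A,\mu|_A)<\infty$ as well.

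With $\mu|_A\in\mathcal E^+_A$ established, Lemma~\ref{l-negl1} applies: the hypothesis $c_*(A)=0$ forces $\mathcal E^+_A=\{0\}$, so $\mu|_A=0$. This means precisely that $A$ is locally $\mu$-negligible. Finally, writing $A\subset\bigcup_j Q_j$ with $\mu^*(Q_j)<\infty$ as provided by $\mu$-$\sigma$-finiteness, each slice $A\cap Q_j$ is locally $\mu$-negligible and contained in a set of finite outer $\mu$-measure, hence genuinely $\mu$-negligible; as a countable union of $\mu$-negligible sets, $A$ is itself $\mu$-negligible.

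I do not anticipate any real obstacle: the argument is essentially a bookkeeping exercise combining the Bourbaki-type fact that ``locally negligible $+$ $\sigma$-finite $=$ negligible'' with the characterization of sets of inner capacity zero given by Lemma~\ref{l-negl1}. The only mild subtlety is the separate treatment of compact $X$ to secure $\mu|_A\in\mathcal E^+$, which is handled uniformly by the standard shift $\kappa\mapsto\kappa+q$.
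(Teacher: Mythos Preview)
Your argument is correct and follows essentially the same route as the paper: both reduce to Lemma~\ref{l-negl1} and then invoke $\mu$-$\sigma$-finiteness to pass from local to global negligibility. The only cosmetic difference is that the paper works with the traces $\mu|_K$ for compact $K\subset A$ (invoking the characterization $\mathcal E^+_K=\{0\}$), while you work directly with the full trace $\mu|_A$ (invoking $\mathcal E^+_A=\{0\}$); your case split $\kappa\geqslant0$ versus compact $X$ to secure $\mu|_A\in\mathcal E^+$ is exactly the verification the paper leaves implicit.
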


\begin{proof} As $A$ is $\mu$-mea\-sur\-ab\-le and $\mu$-$\sigma$-fi\-ni\-te, $\mu^*(A)=0$  will follow once we show that $\mu(K)=0$ for every compact $K\subset A$, but this is obvious in view of Lemma~\ref{l-negl1}.
\end{proof}

To avoid trivialities, assume throughout that $\mathcal E^+\ne\{0\}$, or equivalently
\begin{equation}\label{x0}c(X)>0\end{equation}
(cf.\ Lemma~\ref{l-negl1}); then the kernel $\kappa$ is necessarily not identically infinite: $\kappa\not\equiv+\infty$.

An assertion $\mathcal A(x)$ involving a variable point $x\in X$ is said to hold {\it nearly everywhere\/} ({\it n.e.\/}) on $A\subset X$ if $c_*(E)=0$, $E$ being the set of all $x\in A$ where $\mathcal A(x)$ fails. Replacing here $c_*(E)$ by $c^*(E)$ we arrive at the concept of {\it qua\-si-ev\-ery\-whe\-re\/} ({\it q.e.\/}).

Assume for a moment that $A=K\subset X$ is a {\it compact\/} set with $w(K)<\infty$; then the infimum in (\ref{W}) is an actual minimum, the class $\breve{\mathcal E}^+_K$ being vaguely compact. A measure $\lambda_K\in\breve{\mathcal E}^+_K$ of minimal energy $\kappa(\lambda_K,\lambda_K)=w(K)$ is said to be a {\it capacitary distribution of unit mass\/} on $K$. If moreover $w(K)>0$ (which in particular occurs if the kernel is strictly pse\-udo-pos\-it\-ive), one can introduce the (normalized) measure
\[\gamma_K:=\lambda_K/w(K)\in\mathcal E^+_K,\]
called a {\it capacitary measure\/}
on the (compact) set $K$. A capacitary measure $\gamma_K$ is in general {\it non-unique}, and it
has the following properties \cite[Theorem~2.4]{F1}:\footnote{If Frostman's maximum principle holds (Sect.~\ref{subs-pr}), the capacitary measures $\gamma_K$ are also called {\it equilibrium\/} measures on $K$, for their potentials take the constant value $1$ for nearly all $x\in K$. (In fact, it follows from (\ref{eq-pr-c2}) that $\kappa\gamma_K\leqslant1$ on $X$, which together with (\ref{eq-pr-c1}) gives $\kappa\gamma_K=1$ n.e.\ on $K$.)\label{fr}}
\begin{align}
\label{eq-pr-c0}&\gamma_K(X)=\kappa(\gamma_K,\gamma_K)=c(K),\\
\label{eq-pr-c1}&\kappa\gamma_K\geqslant1\text{ \ n.e.\ on\ }K,\\
\label{eq-pr-c2}&\kappa\gamma_K\leqslant1\text{ \ on\ }S(\gamma_K),\\
\label{eq-pr-c3}&\kappa\gamma_K=1\text{ \ $\gamma_K$-a.e.\ on $X$.}
\end{align}

But if $A\subset X$ is {\it noncompact}, the class $\breve{\mathcal E}^+_A$ is no longer vaguely compact, and the infimum in (\ref{W}) is in general not attained among the admissible measures. In other words, for noncompact $A$, minimum energy problem (\ref{W}) is in general {\it unsolvable}.

\subsection{Potential-theoretic principles}\label{subs-pr}
A kernel $\kappa$ is said to be {\it positive definite\/} if the energy $\kappa(\mu,\mu)$ is ${}\geqslant0$ for all $\mu\in\mathfrak M$ (whenever it is well defined); then $\mathcal E$ is a pre-Hil\-bert space with the inner product $\langle\mu,\nu\rangle:=\kappa(\mu,\nu)$ and the energy seminorm $\|\mu\|:=\sqrt{\kappa(\mu,\mu)}$ \cite[Section~3.1]{F1}.\footnote{Any positive definite kernel is of course  pseudo-positive, but not the other way around. Except for Sects.~\ref{sec-pr-pos} and \ref{sec-pse}, in this study we shall mainly focus on positive definite kernels, thereby enjoying the advantages of a pre-Hilbert structure on the space $\mathcal E$.} The topology on $\mathcal E$ determined by this seminorm is said to be {\it strong}. A positive definite kernel is said to satisfy {\it the energy principle\/} (or to be {\it strictly positive definite\/}) if the seminorm $\|\cdot\|$ is a norm. (Concerning various poten\-tial-theore\-tic principles, see e.g.\ the comprehensive work by Ohtsuka \cite{O}.)

If a kernel $\kappa$ is positive definite, the potential of any $\mu\in\mathcal E$ is (well defined and) finite qua\-si-ev\-ery\-whe\-re on $X$ \cite[p.~165, Corollary]{F1}; furthermore, the potentials of any two given $\mu,\nu\in\mathcal E$ are equal nearly everywhere on $X$ if and only if $\|\mu-\nu\|=0$ \cite[Lemma~3.2.1]{F1}. Therefore, for a {\it strictly\/} positive definite kernel $\kappa$,
\begin{equation}\label{E}\kappa\mu=\kappa\nu\text{ \ n.e.\ on $X$}\iff\mu=\nu.\end{equation}

A positive definite kernel $\kappa$ is said to be {\it consistent\/} if every strong Cauchy net $(\mu_s)\subset\mathcal E^+$ converges strongly to any of its vague limit points $\mu$ \cite[Section~3.3]{F1}.\footnote{Since the space $\mathfrak M$ equipped with the vague topology does not necessarily satisfy the first axiom of countability, the vague convergence cannot in general be described in terms of sequences. We follow Moore and Smith's theory of convergence
\cite{MSm}, based on the concept of {\it nets}. However, if a locally compact space $X$ is sec\-ond-count\-able, then the space $\mathfrak M$ is first-count\-able \cite[Lemma~4.5]{Z-arx}, and the use of nets in $\mathfrak M$ may often be avoided.} (Such $\mu$ do exist e.g.\ if the kernel $\kappa$ is strictly positive definite \cite[Lemma~2.5.1]{F1}.)

A positive definite kernel $\kappa$ is said to be {\it perfect\/} if the energy and consistency principles are both fulfilled \cite[Section~3.3]{F1}. It follows that for a perfect kernel $\kappa$, the cone $\mathcal E^+$ is  complete in the induced strong topology, and moreover {\it every strong Cauchy net\/ $(\mu_s)\subset\mathcal E^+$ converges to the same unique limit\/ $\mu_0\in\mathcal E^+$ both strongly and vaguely}. In fact, all the vague limit points of such $(\mu_s)$ must coincide, being equal to the (unique) strong limit $\mu_0$; since the vague topology on $\mathfrak M$ is Hausdorff, the same $\mu_0$ must also serve as the vague limit of $(\mu_s)$ (see \cite[Section~I.9.1, Corollary]{B1}).

The following two maximum principles will often be used in the sequel.
A kernel $\kappa$ is said to satisfy {\it Frostman's maximum principle\/} ({}=\,the {\it first maximum principle\/}) if for any $\mu\in\mathfrak M^+$ with $\kappa\mu\leqslant1$ on $S(\mu)$, the same inequality holds on all of $X$;\footnote{For the purposes of the present paper, it is enough to
require this only for $\mu$ of finite energy.}
and it is said to satisfy the {\it domination principle\/} ({}=\,the {\it second maximum principle\/}) if for any $\mu,\nu\in\mathcal E^+$ with $\kappa\mu\leqslant\kappa\nu$ $\mu$-a.e., the same inequality is fulfilled everywhere on $X$.

\subsection{Principle of positivity of mass}\label{sec-pr-pos}
The principle of positivity of mass goes back to J.~Deny (see e.g.\ \cite{D2}). In the generality stated in Theorem~\ref{pr-pos}, it seems to be new.

\begin{theorem}\label{pr-pos} Assume that a locally compact space\/ $X$ is\/ $\sigma$-compact, a kernel\/ $\kappa$ is strictly pseudo-positive, and the first and second maximum principles are fulfilled.
For any\/ $\mu,\nu\in\mathcal E^+$ with\/
$\kappa\mu\leqslant\kappa\nu$ n.e.\ on\/ $X$, then
\begin{equation}\label{prpos}\mu(X)\leqslant\nu(X).\end{equation}
\end{theorem}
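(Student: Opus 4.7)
The plan is to test the n.e.\ inequality $\kappa\mu\leqslant\kappa\nu$ against equilibrium measures of compact exhausting subsets of $X$, using the first maximum principle to bound the relevant potentials by $1$, and then to pass to the limit via $\sigma$-compactness. We may assume $\nu(X)<\infty$, for otherwise the assertion is trivial.

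For each compact $K\subset X$ with $c(K)>0$, the vague compactness of $\breve{\mathcal E}^+_K$ together with the vague lower semicontinuity of the energy produces a minimizer $\lambda_K$ of $w(K)$, and strict pseudo-positivity guarantees $w(K)>0$, hence $c(K)<\infty$; the equilibrium measure $\gamma_K:=\lambda_K/w(K)$ then satisfies (\ref{eq-pr-c0})--(\ref{eq-pr-c3}). Frostman's maximum principle upgrades (\ref{eq-pr-c2}) to $\kappa\gamma_K\leqslant 1$ on all of $X$. The heart of the argument is then the chain
\[\mu(K)\leqslant\int_K\kappa\gamma_K\,d\mu\leqslant\int_X\kappa\gamma_K\,d\mu=\int_X\kappa\mu\,d\gamma_K\leqslant\int_X\kappa\nu\,d\gamma_K=\int_X\kappa\gamma_K\,d\nu\leqslant\nu(X),\]
in which the middle equalities are Fubini--Tonelli for the non-negative kernel, and the remaining inequalities rest on n.e.\ comparisons and on $\kappa\gamma_K\leqslant 1$.

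The role of the $\sigma$-compactness of $X$ is twofold. First, since any Radon $\mu$ is then automatically $\sigma$-finite, Lemma~\ref{l-negl} converts the n.e.\ bound (\ref{eq-pr-c1}) into $\kappa\gamma_K\geqslant 1$ $\mu$-a.e.\ on $K$ (justifying the first inequality above); applied to the bounded Radon measure $\gamma_K$ it likewise converts the n.e.\ hypothesis into $\kappa\mu\leqslant\kappa\nu$ $\gamma_K$-a.e.\ (justifying the fourth). Second, once one has $\mu(K)\leqslant\nu(X)$ for every compact $K$---the case $c(K)=0$ being handled trivially by Lemma~\ref{l-negl1}---the conclusion $\mu(X)\leqslant\nu(X)$ follows by exhausting $X$ with a countable increasing sequence of compacts.

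The main technical obstacle is ensuring the measurability of the exceptional sets so that Lemma~\ref{l-negl} applies: the lower semicontinuity of potentials of positive measures makes both $\{x\in K:\kappa\gamma_K(x)<1\}$ and $\{x\in X:\kappa\mu(x)>\kappa\nu(x)\}$ Borel, which suffices. This route visibly uses only Frostman's maximum principle; the domination principle, also assumed in the statement, would enter through an alternative path that first upgrades $\kappa\mu\leqslant\kappa\nu$ from n.e.\ to pointwise everywhere on $X$ before integrating against $\gamma_K$.
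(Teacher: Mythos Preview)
Your argument is correct and in fact slightly sharper than the paper's. The paper fixes an increasing sequence $(K_j)$ with union $X$, uses the \emph{domination principle} to show that $\kappa\gamma_{K_j}\leqslant\kappa\gamma_{K_{j+1}}$ everywhere, and then applies monotone convergence to the limit function $g=\lim_j\kappa\gamma_{K_j}$, which equals $1$ n.e.\ on $X$; integrating $\kappa\mu\leqslant\kappa\nu$ against $\gamma_{K_j}$ and passing to the limit gives $\int g\,d\mu\leqslant\int g\,d\nu$, whence $\mu(X)\leqslant\nu(X)$. Your route bypasses the monotonicity of $(\kappa\gamma_{K_j})$ entirely: by inserting the extra inequality $\mu(K)\leqslant\int_K\kappa\gamma_K\,d\mu$ (coming from $\kappa\gamma_K\geqslant1$ n.e.\ on $K$), you obtain $\mu(K)\leqslant\nu(X)$ for each compact $K$ individually and then take the supremum. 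As you observe, this uses only Frostman's maximum principle and never invokes the second maximum principle, so your proof actually strengthens the result (the paper's Remark~\ref{prposm} notes this only for compact $X$). The paper's approach, on the other hand, yields the auxiliary fact that the equilibrium potentials increase pointwise to a function equal to $1$ n.e., which is of independent interest.

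One small caveat: your appeal to ``Fubini--Tonelli for the non-negative kernel'' and to $\int_K\leqslant\int_X$ tacitly assumes $\kappa\geqslant0$, which by the paper's convention holds only when $X$ is noncompact. In the compact case you should either take $K=X$ directly (making the argument a one-liner, as in the paper's last paragraph) or invoke the shift $\kappa'=\kappa+q\geqslant0$ of footnote~\ref{fIII}.
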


\begin{proof}
Assume first that $\kappa\geqslant0$. The space $X$ being $\sigma$-compact, there is an increasing sequence $(K_j)$ of compact sets with the union $X$. The kernel being strictly pseu\-do-pos\-it\-ive, for each $j$ large enough, we have $0<c(K_j)<\infty$ (cf.\ (\ref{153}) and (\ref{x0})), and there is therefore an equilibrium measure $\gamma_{K_j}$ on the (compact) set $K_j$ (cf.\ footnote~\ref{fr}). By the countable subadditivity of inner capacity on universally measurable sets \cite[Lemma~2.3.5]{F1}, $\kappa\gamma_{K_j}=1=\kappa\gamma_{K_{j+1}}$ n.e.\ on $K_j$, hence $\gamma_{K_j}$-a.e.\ (Lemma~\ref{l-negl}), and consequently $\kappa\gamma_{K_j}\leqslant\kappa\gamma_{K_{j+1}}$ on all of $X$, by the domination principle.
Thus the sequence $(\kappa\gamma_{K_j})$ increases pointwise on $X$ to some function $g\geqslant0$, and moreover
\begin{equation}\label{prposs}g=1\text{ \ n.e.\ on $X$},\end{equation}
again by the countable subadditivity of inner capacity on universally measurable sets.

Given $\mu,\nu\in\mathcal E^+$ with $\kappa\mu\leqslant\kappa\nu$ n.e.\ on $X$, the same inequality is fulfilled $\gamma_{K_j}$-a.e.\ (Lemma~\ref{l-negl}); therefore, by Fubini's theorem,
\[\int\kappa\gamma_{K_j}\,d\mu=\int\kappa\mu\,d\gamma_{K_j}\leqslant
\int\kappa\nu\,d\gamma_{K_j}=\int\kappa\gamma_{K_j}\,d\nu.\]
By the monotone convergence theorem 
applied to the increasing sequence $(\kappa\gamma_{K_j})$ of positive functions with the upper envelope $g$ (cf.\ (\ref{prposs})), we obtain $\int g\,d\mu\leqslant\int g\,d\nu$,
whence (\ref{prpos}); for, $g=1$ $\mu$-a.e.\ as well as $\nu$-a.e.\ (Lemma~\ref{l-negl}), $X$ being $\sigma$-compact.

It remains to consider the case of compact $X$; then $0<c(X)<\infty$, and there is therefore an equilibrium measure $\gamma_X$ on the whole of $X$. Moreover, $\kappa\gamma_X=1$ holds true n.e.\ on $X$, hence $\mu$-a.e.\ as well as $\nu$-a.e.\ (Lemma~\ref{l-negl}). Likewise, $\kappa\mu\leqslant\kappa\nu$ $\gamma_X$-a.e., the measure $\gamma_X$ being of finite energy. This altogether gives
\[\mu(X)=\int\kappa\gamma_X\,d\mu=\int\kappa\mu\,d\gamma_X\leqslant\int\kappa\nu\,d\gamma_X=
\int\kappa\gamma_X\,d\nu=\nu(X)\]
as desired.
\end{proof}

\begin{remark}It is seen from the above proof that in the case of compact $X$, the domination principle is unnecessary for the validity of Theorem~\ref{pr-pos}.\label{prposm}\end{remark}

\subsection{Examples}\label{rem:clas} The $\alpha$-Riesz kernels $|x-y|^{\alpha-n}$ of order $\alpha\in(0,2]$, $\alpha<n$, on $\mathbb R^n$, $n\geqslant2$ (thus in particular the Newtonian kernel $|x-y|^{2-n}$ on $\mathbb R^n$, $n\geqslant3$), satisfy the energy and consistency principles as well as the first and second maximum principles \cite[Theorems~1.10, 1.15, 1.18, 1.27, 1.29]{L}. So do the associated $\alpha$-Green kernels on an arbitrary open subset of $\mathbb R^n$, $n\geqslant2$ \cite[Theorems~4.6, 4.9, 4.11]{FZ}. The ($2$-)Green kernel on a planar Greenian set is likewise strictly positive definite \cite[Section~I.XIII.7]{Doob} and consistent \cite{E}, and it fulfills the first and second maximum principles (see \cite[Theorem~5.1.11]{AG}, \cite[Section~I.V.10]{Doob}). Furthermore, all these kernels also satisfy the principle of positivity of mass (Theorem~\ref{pr-pos}).

Finally, the restriction of the logarithmic kernel $-\log\,|x-y|$ to a closed disc in $\mathbb R^2$ of radius ${}<1$ is strictly positive definite and fulfills Frostman's maximum principle \cite[Theorems~1.6, 1.16]{L}; hence, it is perfect \cite[Theorem~3.4.2]{F1}, and satisfies the principle of positivity of mass (cf.\ Theorem~\ref{pr-pos} and Remark~\ref{prposm}). However, the domination principle then fails in general; it does hold only in a weaker sense where the measures $\mu,\nu$ involved in the ab\-ove-quo\-ted definition (see Sect.~\ref{subs-pr}) meet the additional requirement that $\nu(\mathbb R^2)\leqslant\mu(\mathbb R^2)$, cf.\ \cite[Theorem~3.2]{ST}.

\section{Some basic facts of the theory of inner balayage}\label{i1}

In what follows, we shall use the notions and notations introduced in Sect.~\ref{sec-1}.

The theory of inner balayage of $\mu\in\mathcal E^+$ to arbitrary $A\subset X$, generalizing Cartan's theory \cite{Ca2} of Newtonian balayage to a suitable kernel $\kappa$ on a locally compact space $X$, was developed in \cite[Sections~4--8]{Z-arx1}. The present section reviews some basic facts of this theory, to be used in the sequel; throughout it, we require {\it the energy, consistency, and domination principles\/} to be fulfilled.

Assume for a moment that $A=K\subset X$ is {\it compact}. Based on the abo\-ve-men\-tio\-ned principles, one can prove by generalizing the classical Gauss variational method (see \cite{C0,Ca2}, cf.\ also \cite[Section~IV.5.23]{L}) that for any given $\mu\in\mathcal E^+$, there exists $\mu^K\in\mathcal E^+_K$ uniquely determined within $\mathcal E^+_K$ by the equality $\kappa\mu^K=\kappa\mu$ n.e.\ on $K$.\footnote{Concerning the uniqueness, see (\ref{E}) applied to the space $X:=K$ and the kernel $\kappa':=\kappa|_{K\times K}$.}
This $\mu^K$ is said to be the {\it balayage\/}  of $\mu\in\mathcal E^+$ onto (compact) $K$.

If now $A$ is {\it arbitrary}, there is in general no $\mu^A\in\mathcal E^+_A$ having  the property
$\kappa\mu^A=\kappa\mu$ n.e.\ on $A$.\footnote{Compare with Theorem~\ref{th-intr}(c).} Nevertheless, a substantial theory of inner balayage of $\mu\in\mathcal E^+$ to arbitrary $A$ was developed \cite{Z-arx1}, and this was performed by means of several alternative approaches described in Theorem~\ref{th-intr} below.

 To make an exposition of the theory of inner balayage similar to that of inner capacitary measures, cf.\ \cite[Section~4.1]{F1}, we have however chosen here to start with another approach, different from those utilized in \cite{Z-arx1}.

Given $\mu\in\mathcal E^+$ and $A\subset X$, denote
\begin{equation}\label{io-n}\Gamma_{A,\mu}^+:=\bigl\{\nu\in\mathcal E^+: \ \kappa\nu\geqslant\kappa\mu\text{ \ n.e.\ on $A$}\bigr\}.\end{equation}

\begin{definition}\label{def-bal-n} The {\it inner balayage\/} $\mu^A$ of $\mu\in\mathcal E^+$ to $A\subset X$ is defined as the measure of minimum energy in the class $\Gamma_{A,\mu}^+$, that is,  $\mu^A\in\Gamma_{A,\mu}^+$ and\footnote{Minimum energy problem (\ref{e-d}) makes sense, for $\Gamma_{A,\mu}^+\ne\varnothing$ because of the relation $\mu\in\Gamma_{A,\mu}^+$.}
\begin{equation}\label{e-d}\|\mu^A\|^2=\min_{\nu\in\Gamma_{A,\mu}^+}\,\|\nu\|^2.\end{equation}
\end{definition}

\begin{lemma}\label{def-bal-n-u}
  The inner balayage\/ $\mu^A$ is unique\/ {\rm(}if it exists\/{\rm)}.
\end{lemma}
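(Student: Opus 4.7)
The plan is to exploit the pre-Hilbert structure on $\mathcal{E}$ (which is available because the energy principle is assumed throughout this section) and run the standard parallelogram-identity argument for the nearest point to the origin in a convex set. Suppose $\mu_1,\mu_2\in\Gamma_{A,\mu}^+$ both realize the minimum $m:=\|\mu^A\|^2$ in \eqref{e-d}. I aim to show $\mu_1=\mu_2$.

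First I would verify that $\Gamma_{A,\mu}^+$ is convex. For $t\in[0,1]$, the measure $t\mu_1+(1-t)\mu_2$ again lies in $\mathcal{E}^+$, and by linearity of the potential
\[
\kappa\bigl(t\mu_1+(1-t)\mu_2\bigr)=t\,\kappa\mu_1+(1-t)\,\kappa\mu_2\geqslant\kappa\mu
\]
at every point of $A$ outside $E_1\cup E_2$, where $E_i\subset A$ is the (inner) exceptional set for the inequality $\kappa\mu_i\geqslant\kappa\mu$, so that $c_*(E_i)=0$. Since inner capacity is finitely subadditive on sets with $c_*=0$ (an immediate consequence of Lemma~\ref{l-negl1}, noting that any $\nu\in\mathcal E^+_K$ concentrated on a compact $K\subset E_1\cup E_2$ splits via its trace to closed subsets in a way forcing $\nu=0$), we have $c_*(E_1\cup E_2)=0$, so $t\mu_1+(1-t)\mu_2\in\Gamma_{A,\mu}^+$.

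Next I would apply the parallelogram identity in $\mathcal{E}$:
\[
\|\mu_1-\mu_2\|^2+\|\mu_1+\mu_2\|^2=2\|\mu_1\|^2+2\|\mu_2\|^2=4m.
\]
By the convexity just established, $\tfrac12(\mu_1+\mu_2)\in\Gamma_{A,\mu}^+$, whence
\[
\bigl\|\tfrac12(\mu_1+\mu_2)\bigr\|^2\geqslant m,\qquad\text{i.e.}\qquad\|\mu_1+\mu_2\|^2\geqslant 4m.
\]
Combining gives $\|\mu_1-\mu_2\|^2\leqslant 0$, so $\|\mu_1-\mu_2\|=0$. Finally, because the energy principle is in force in this section, $\|\cdot\|$ is a genuine norm on $\mathcal{E}$ (not merely a seminorm), and therefore $\mu_1=\mu_2$, as required.

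The only mildly delicate point is the convexity step, specifically the finite subadditivity of the ideal $\{E\subset X:c_*(E)=0\}$; this is standard but worth a line since $c_*$ is in general only countably subadditive on universally measurable sets (Lemma~2.3.5 in \cite{F1}). Everything else is the textbook uniqueness-of-nearest-point argument in a pre-Hilbert space, adapted to the shifted convex set $\Gamma_{A,\mu}^+$ instead of a linear subspace.
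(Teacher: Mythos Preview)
Your overall strategy---show $\Gamma_{A,\mu}^+$ is convex, then run the parallelogram identity in the pre-Hilbert space $\mathcal E$---is exactly the paper's approach (see the proof of Lemma~\ref{def-bal-n-u} and footnote~\ref{unique}). The parallelogram argument is fine.

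The gap is in your justification of convexity. You assert that finite subadditivity of the ideal $\{E:c_*(E)=0\}$ follows from Lemma~\ref{l-negl1} via ``splitting $\nu\in\mathcal E^+_K$ by its trace to closed subsets''. This does not work as stated: for \emph{arbitrary} $E_1,E_2$ with $c_*(E_i)=0$, a compact $K\subset E_1\cup E_2$ need not decompose into closed pieces lying in $E_1$ and $E_2$, and since $E_1,E_2$ need not be $\nu$-measurable you cannot form the traces $\nu|_{E_i}$ either. In fact, under AC one can (for, say, the Newtonian kernel) build a Bernstein-type partition of a compact set of positive capacity into two pieces each of inner capacity zero, so the general claim is false.

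What rescues the argument here---and what the paper invokes via Lemma~\ref{str}---is that your exceptional sets are not arbitrary: they have the form $E_i=A\cap U_i$ with $U_i:=\{\kappa\mu_i<\kappa\mu\}$ \emph{Borel} (both potentials being l.s.c.). Then either cite Lemma~\ref{str} directly, or run your trace argument correctly: for compact $K\subset E_1\cup E_2\subset A$ and $\nu\in\mathcal E^+_K$, the set $K\cap U_1$ is Borel, hence $\nu$-measurable; setting $\nu_1:=\nu|_{K\cap U_1}$ and $\nu_2:=\nu-\nu_1$, one has $\nu_1\in\mathcal E^+_{E_1}=\{0\}$ and $\nu_2\in\mathcal E^+_{E_2}=\{0\}$, so $\nu=0$. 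Either way, the word ``closed'' should be replaced by ``Borel'' (or ``universally measurable'').
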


\begin{proof} It is clear from a strengthened version of countable subadditivity for inner capacity,  presented by Lemma~\ref{str}, that the class $\Gamma_{A,\mu}^+$ is {\it convex}. Therefore, the lemma follows by standard arguments based on a pre-Hil\-bert structure on the space~$\mathcal E$.\footnote{In fact, if a set $\Gamma\subset\mathcal E$ is convex, and if $\gamma,\gamma'\in\Gamma$ are two measures of minimum energy, then an application of the parallelogram identity in the pre-Hilbert space $\mathcal E$ gives
\[0\leqslant\|\gamma-\gamma'\|^2=2\|\gamma\|^2+2\|\gamma'\|^2-\|\gamma+\gamma'\|^2\leqslant0,\]
for $(\gamma+\gamma')/2\in\Gamma$. Hence $\gamma=\gamma'$, by the energy principle.\label{unique}}\end{proof}

\begin{lemma}\label{str}
For arbitrary\/ $A\subset X$ and universally measurable\/ $U_j\subset X$, $j\in\mathbb N$,
\[c_*\Bigl(\bigcup_{j\in\mathbb N}\,A\cap U_j\Bigr)\leqslant\sum_{j\in\mathbb N}\,c_*(A\cap U_j).\]\end{lemma}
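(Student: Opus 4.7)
The plan is to reduce the statement to Fuglede's countable subadditivity for universally measurable sets (Lemma~2.3.5 in \cite{F1}, the version invoked in the proof of Theorem~\ref{pr-pos}) by means of the compact-exhaustion formula~(\ref{153}).

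First I would write $B:=\bigcup_{j\in\mathbb N}A\cap U_j$ and, using (\ref{153}), observe that
\[c_*(B)=\sup_{K\in\mathfrak C_B}\,c(K),\]
so it suffices to establish $c(K)\leqslant\sum_{j}c_*(A\cap U_j)$ for every compact $K\subset B$.

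Fix such a $K$. Since every point of $K$ lies in some $A\cap U_j$, one has $K\subset A$ and $K=\bigcup_{j}(K\cap U_j)$. Each set $K\cap U_j$ is universally measurable, being the intersection of the Borel set $K$ with the universally measurable set $U_j$. Fuglede's countable subadditivity on universally measurable sets therefore gives
\[c(K)=c_*\Bigl(\bigcup_{j\in\mathbb N}K\cap U_j\Bigr)\leqslant\sum_{j\in\mathbb N}c_*(K\cap U_j).\]
From $K\subset A$ we get $K\cap U_j\subset A\cap U_j$, and so by monotonicity of inner capacity $c_*(K\cap U_j)\leqslant c_*(A\cap U_j)$. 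Combining these two inequalities and taking the supremum over $K\in\mathfrak C_B$ yields the claim.

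No real obstacle is expected: the argument is a routine ``cut by a compact subset, then apply the known universally measurable case'' reduction. The only point to keep an eye on is the verification that $K\cap U_j$ lies in the domain of applicability of Fuglede's lemma, which is immediate since universally measurable sets form a $\sigma$-algebra containing the Borel sets.
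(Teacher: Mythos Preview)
Your argument is correct. The paper itself does not give a self-contained proof but simply cites \cite[p.~158, Remark]{F1} (and compares with \cite[Lemma~2.3.5]{F1}); your reduction via the compact-exhaustion formula~(\ref{153}) to Fuglede's countable subadditivity on the universally measurable sets $K\cap U_j$, followed by monotonicity, is precisely the natural way to justify that remark and is entirely valid.
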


\begin{proof}See \cite[p.~158, Remark]{F1}, compare with \cite[Lemma~2.3.5]{F1}.\footnote{For the Newtonian kernel $|x-y|^{2-n}$ on $\mathbb R^n$, $n\geqslant3$, this goes back to Cartan \cite[p.~253]{Ca2}. Also note that Lemma~\ref{str} actually holds true for any pse\-udo-pos\-it\-ive kernel on a locally compact space.}\end{proof}

\subsection{On the existence of $\mu^A$ and its alternative characterizations}
As before, we denote by $\mathcal E^+_A$ the convex cone of all $\nu\in\mathcal E^+$ concentrated on $A$, and by $\mathcal E_A'$ the closure of $\mathcal E^+_A$ in the strong topology on $\mathcal E^+$.

\begin{theorem}\label{th-intr}For any\/ $\mu\in\mathcal E^+$ and any\/ $A\subset X$, the inner balayage\/ $\mu^A$, introduced by Definition\/~{\rm\ref{def-bal-n}}, does exist, and it  satisfies the three relations
\begin{align}
\label{eq-pr-10}\kappa\mu^A&=\kappa\mu\text{ \ n.e.\ on\ }A,\\
\label{eq-pr-101}\kappa\mu^A&=\kappa\mu\text{ \ $\mu^A$-a.e.,}\\
\label{eq-pr-102}\kappa\mu^A&\leqslant\kappa\mu\text{ \ on $X$.}
\end{align}
Furthermore, $\mu^A$ can alternatively be characterized by means of any of the following\/ {\rm(}equivalent\/{\rm)} assertions\/ {\rm(a)--(d)}:
\begin{itemize}
\item[\rm(a)] $\mu^A$ is the unique measure of minimum potential in the class\/ $\Gamma_{A,\mu}^+$, that is,  $\mu^A\in\Gamma_{A,\mu}^+$ and
\[\kappa\mu^A=\min_{\nu\in\Gamma_{A,\mu}^+}\,\kappa\nu\text{ \ on\/ $X$}.\]
\item[\rm(b)] $\mu^A$ is the unique orthogonal projection of\/ $\mu$ in the pre-Hilbert space\/ $\mathcal E$ onto the\/ {\rm(}convex, strongly complete\/{\rm)} cone $\mathcal E'_A$, that is, $\mu^A\in\mathcal E'_A$ and\/\footnote{Concerning the concept of orthogonal projection, see e.g.\ \cite[Theorem~1.12.3]{E2}. (Observe that, being a strongly closed subset of the strongly complete cone $\mathcal E^+$, $\mathcal E'_A$ is likewise strongly complete.)}
\[\|\mu-\mu^A\|=\min_{\nu\in\mathcal E_A'}\,\|\mu-\nu\|.\]
\item[\rm(c)] $\mu^A$ is the unique measure in\/ $\mathcal E'_A$ having the property
\[\kappa\mu^A=\kappa\mu\text{ \ n.e.\ on\ }A.\]
\item[\rm(d)] $\mu^A$ is the unique measure in\/ $\mathcal E^+$ satisfying any of the three limit relations
\begin{align*}
 &\mu^K\to\mu^A\text{ \ strongly in\/ $\mathcal E^+$ as\/ $K\uparrow A$},\\
 &\mu^K\to\mu^A\text{ \ vaguely in\/ $\mathcal E^+$ as\/ $K\uparrow A$},\\
 &\kappa\mu^K\uparrow\kappa\mu^A\text{ \ pointwise on\/ $X$ as\/ $K\uparrow A$},
\end{align*}
where\/ $\mu^K$ denotes the only measure in\/ $\mathcal E^+_K$ with\/ $\kappa\mu^K=\kappa\mu$ n.e.\ on\/ $K$.
\end{itemize}
\end{theorem}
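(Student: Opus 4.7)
The plan is to construct $\mu^A$ as the strong limit of the directed net of classical balayages $(\mu^K)_{K\in\mathfrak C_A}$, in close analogy with Fuglede's construction of inner capacitary measures in \cite[Section~4.1]{F1}. For compact $K$, the balayage $\mu^K\in\mathcal E^+_K$ with $\kappa\mu^K=\kappa\mu$ n.e.\ on $K$ is already supplied by the Gauss variational method; my first task is to identify this classical $\mu^K$ with the minimum energy element of $\Gamma_{K,\mu}^+$. For any $\nu\in\Gamma_{K,\mu}^+$, Lemma~\ref{l-negl} gives $\kappa\nu\geqslant\kappa\mu=\kappa\mu^K$ $\mu^K$-a.e., hence on all of $X$ by the domination principle, so that Fubini yields
\[\|\mu^K\|^2=\int\kappa\mu^K\,d\mu^K\leqslant\int\kappa\nu\,d\mu^K=\int\kappa\mu^K\,d\nu\leqslant\|\nu\|^2,\]
confirming agreement with Definition~\ref{def-bal-n} in the compact case and yielding the bound $\|\mu^K\|\leqslant\|\mu\|$ upon choosing $\nu=\mu$.

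The same inequality, applied to $\nu=\mu^{K_2}\in\Gamma_{K_1,\mu}^+$ for $K_1\subset K_2$ in $\mathfrak C_A$, gives $\kappa\mu^{K_1}\leqslant\kappa\mu^{K_2}$ pointwise on $X$; hence the net $(\kappa\mu^K)$ increases to some lower semicontinuous envelope $u\leqslant\kappa\mu$. A direct expansion using $\kappa(\mu^{K_1},\mu^{K_2})=\int\kappa\mu^{K_2}\,d\mu^{K_1}=\int\kappa\mu\,d\mu^{K_1}=\|\mu^{K_1}\|^2$ (via Lemma~\ref{l-negl} and Fubini) produces the identity $\|\mu^{K_2}-\mu^{K_1}\|^2=\|\mu^{K_2}\|^2-\|\mu^{K_1}\|^2$, so that the bounded monotone sequence $\|\mu^K\|^2$ forces $(\mu^K)$ to be strongly Cauchy. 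Perfectness then delivers a unique $\mu_0\in\mathcal E^+$ to which the net converges both strongly and vaguely; by construction $\mu_0\in\mathcal E'_A$, and this already yields characterization~(d).

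To verify \eqref{eq-pr-10}--\eqref{eq-pr-102} and the remaining characterizations, I would chain together: (i) vague convergence with the principle of descent gives $\kappa\mu_0\leqslant u\leqslant\kappa\mu$ on $X$, which is \eqref{eq-pr-102}; (ii) $u=\kappa\mu$ n.e.\ on each $K\in\mathfrak C_A$ (from $\kappa\mu^K\leqslant u\leqslant\kappa\mu$ together with $\kappa\mu^K=\kappa\mu$ n.e.\ on $K$), and monotonicity of $(\kappa\mu^K)$ forces any $x\in A$ with $u(x)<\kappa\mu(x)$ to lie in the exceptional null set of every $K_0\in\mathfrak C_A$ containing~$x$; equation~\eqref{153} then yields $c_*(\{u<\kappa\mu\}\cap A)=0$, hence $u=\kappa\mu$ n.e.\ on $A$; (iii) strong convergence gives $\kappa(\mu^K,\nu)\to\kappa(\mu_0,\nu)$ for each $\nu\in\mathcal E^+$, while monotone convergence gives $\kappa(\mu^K,\nu)\to\int u\,d\nu$, so that $\int u\,d\mu_0=\|\mu_0\|^2=\int\kappa\mu_0\,d\mu_0$ and consequently $u=\kappa\mu_0$ $\mu_0$-a.e., which combined with (ii) delivers \eqref{eq-pr-101} and \eqref{eq-pr-10}. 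Minimality of $\mu_0$ in $\Gamma_{A,\mu}^+$ follows by letting $K\uparrow A$ in the compact-case inequality $\|\mu^K\|^2\leqslant\|\nu\|^2$ valid for every $\nu\in\Gamma_{A,\mu}^+$, and uniqueness is Lemma~\ref{def-bal-n-u}. Characterization~(a) comes from repeating the domination argument with $\nu\in\Gamma_{A,\mu}^+$ to get $\kappa\nu\geqslant\kappa\mu_0$ on $X$; (b) follows from the pre-Hilbert expansion of $\|\mu-\nu\|^2$ for $\nu\in\mathcal E'_A$ together with \eqref{eq-pr-101} and the strong completeness of $\mathcal E'_A$; (c) reduces to (d) plus a uniqueness argument proving $\kappa(\sigma,\mu_0-\mu_0')=0$ for all $\sigma\in\mathcal E'_A$, followed by the energy principle.

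The main obstacle I expect is the transfer of n.e.\ identities from each compact~$K$ to the whole set~$A$, because the exceptional null sets depend on $K$ and $\mathfrak C_A$ is generally uncountable, blocking a direct appeal to countable subadditivity. Monotonicity of $(\kappa\mu^K)$ is what rescues the argument: it localizes the bad set inside any single $K_0\in\mathfrak C_A$ containing the point, after which \eqref{153} and Lemma~\ref{str} close the gap. A secondary subtlety lies in upgrading the $\mu_0$-a.e.\ form of \eqref{eq-pr-101} to the n.e.-on-$A$ form of \eqref{eq-pr-10}, which requires combining strong convergence of energies against arbitrary test measures with monotone convergence for potentials.
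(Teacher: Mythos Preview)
Your construction is essentially correct and self-contained, whereas the paper's proof of this theorem is almost entirely a citation: it invokes \cite[Section~4]{Z-arx1} for the existence of a measure $\mu'$ satisfying (a)--(d) together with \eqref{eq-pr-10}--\eqref{eq-pr-102}, and then only supplies the short argument (via (a) and Cauchy--Schwarz) showing that this same $\mu'$ minimizes energy in $\Gamma_{A,\mu}^+$, i.e.\ agrees with Definition~\ref{def-bal-n}. In effect you have reconstructed the content of \cite{Z-arx1} from scratch, using the Gauss variational method on compacts, monotonicity via domination, and the Cauchy identity $\|\mu^{K_2}-\mu^{K_1}\|^2=\|\mu^{K_2}\|^2-\|\mu^{K_1}\|^2$; this is precisely the intended route, so your approach and the paper's differ only in what is proved versus what is cited.

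One point deserves sharpening. Your step~(iii) yields $u=\kappa\mu_0$ only $\mu_0$-a.e., while the third limit relation in~(d) demands $u=\kappa\mu_0$ \emph{everywhere} on~$X$, and \eqref{eq-pr-10} needs it at least n.e.\ on~$A$~--- which does not follow from the $\mu_0$-a.e.\ statement combined with~(ii), because $\mu_0\in\mathcal E'_A$ need not be concentrated on~$A$. The fix is the one you gesture at in the last paragraph: from $\int u\,d\nu=\kappa(\mu_0,\nu)=\int\kappa\mu_0\,d\nu$ for every $\nu\in\mathcal E^+$ one gets $u=\kappa\mu_0$ $\nu$-a.e.\ for all such~$\nu$; taking $\nu:=\mu^K$ gives $\kappa\mu^K\leqslant u=\kappa\mu_0$ $\mu^K$-a.e., whence $\kappa\mu^K\leqslant\kappa\mu_0$ on all of~$X$ by the domination principle, and passing to the supremum over~$K$ yields $u\leqslant\kappa\mu_0$ everywhere, closing the gap. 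Similarly, \eqref{eq-pr-101} is obtained more cleanly from $\kappa(\mu,\mu_0)=\lim_K\int\kappa\mu\,d\mu^K=\lim_K\|\mu^K\|^2=\|\mu_0\|^2$ (together with $\kappa\mu\geqslant\kappa\mu_0$) than by combining (ii) with the $\mu_0$-a.e.\ identity.
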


\begin{proof} It was shown in \cite[Section~4]{Z-arx1} that there exists $\mu'\in\mathcal E^+$ uniquely determined by any of (a)--(d) (with $\mu^A$ replaced by $\mu'$), and that it satisfies relations (\ref{eq-pr-10})--(\ref{eq-pr-102}) (again with $\mu^A$ replaced by $\mu'$).\footnote{A kernel $\kappa$ was defined in \cite{Z-arx1} to be positive; however, this restriction on $\kappa$ is unnecessary for the validity of the ab\-ove-quo\-ted results from \cite{Z-arx1}.}

It thus remains to show that the same $\mu'$ meets Definition~\ref{def-bal-n}, and so it serves as the (unique) inner balayage $\mu^A$. To this end, one can assume the two conditions
\begin{equation}\label{2}c_*(A)>0\text{ \ and \ }\kappa\mu\ne0\text{ \ n.e.\ on $A$}\end{equation}
to be fulfilled, for if not, zero measure belongs to $\Gamma_{A,\mu}^+$, and therefore
\[\mu^A=0=\mu',\] the former equality being obvious from Definition~\ref{def-bal-n}, while the latter being clear from (a).
Applying (c), we now conclude from (\ref{2}) that then necessarily
\begin{equation}\label{prime}\mu'\ne0.\end{equation}

As $\mu'\in\Gamma_{A,\mu}^+$ by (\ref{eq-pr-10}), the theorem will follow once we show that
\begin{equation}\label{bal-ch1}\|\mu'\|\leqslant\|\nu\|,\end{equation}
$\nu\in\Gamma_{A,\mu}^+$ being arbitrarily given. But, by virtue of (a), $\kappa\mu'\leqslant\kappa\nu$  everywhere on $X$,
which yields, by the Cauchy--Schwarz (Bunyakovski) inequality,
\[\|\mu'\|^2=\int\kappa\mu'\,d\mu'\leqslant\int\kappa\nu\,d\mu'=\langle\nu,\mu'\rangle\leqslant\|\nu\|\cdot\|\mu'\|,\]
and dividing by $\|\mu'\|\ne0$ (cf.\ (\ref{prime})) results in (\ref{bal-ch1}).
\end{proof}

\begin{corollary}\label{bal-tot-m} Assume that\/ $X$ is\/ $\sigma$-compact while\/ $\kappa$ satisfies Frostman's maximum principle.
For any\/ $\mu\in\mathcal E^+$ and\/ $A\subset X$, the inner balayage\/ $\mu^A$ is of minimum total mass in the class\/ $\Gamma_{A,\mu}^+$, that is,
\begin{equation}\label{eq-t-m}\mu^A(X)=\min_{\nu\in\Gamma_{A,\mu}^+}\,\nu(X).\end{equation}
\end{corollary}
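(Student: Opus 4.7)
The plan is very short: this is a direct application of the principle of positivity of mass (Theorem~\ref{pr-pos}) to the pair $(\mu^A,\nu)$, where $\nu\in\Gamma_{A,\mu}^+$ is arbitrary, combined with the fact established in Theorem~\ref{th-intr}(a) that $\kappa\mu^A$ is the pointwise minimum of the potentials of the admissible measures.

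More precisely, I would proceed as follows. First, I would note that $\mu^A$ itself belongs to $\Gamma_{A,\mu}^+$: indeed, relation (\ref{eq-pr-10}) in Theorem~\ref{th-intr} gives $\kappa\mu^A=\kappa\mu$ n.e.\ on $A$, which in particular means $\kappa\mu^A\geqslant\kappa\mu$ n.e.\ on $A$. Hence the infimum on the right-hand side of (\ref{eq-t-m}) is realized provided one shows $\mu^A(X)\leqslant\nu(X)$ for every $\nu\in\Gamma_{A,\mu}^+$. Fix such a $\nu$. By Theorem~\ref{th-intr}(a), $\mu^A$ is the measure of minimum potential in the class $\Gamma_{A,\mu}^+$, so
\[\kappa\mu^A\leqslant\kappa\nu\text{ \ everywhere on }X,\]
and a fortiori n.e.\ on $X$.

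Next I would verify that all hypotheses of Theorem~\ref{pr-pos} are at hand for the pair $(\mu^A,\nu)\in\mathcal E^+\times\mathcal E^+$. The space $X$ is $\sigma$-compact by the assumption of the corollary. The kernel $\kappa$ is strictly pseudo-positive: indeed, the standing hypotheses of Section~\ref{i1} include the energy principle, which means $\kappa$ is strictly positive definite, hence in particular strictly pseudo-positive. Frostman's maximum principle (the first maximum principle) is assumed in the corollary, while the domination principle (the second maximum principle) is part of the standing hypotheses of Section~\ref{i1}. Thus Theorem~\ref{pr-pos} applies and yields $\mu^A(X)\leqslant\nu(X)$, which together with $\mu^A\in\Gamma_{A,\mu}^+$ establishes (\ref{eq-t-m}).

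There is essentially no technical obstacle: the corollary is a compact consequence of Theorem~\ref{th-intr}(a) together with Theorem~\ref{pr-pos}. The only point requiring momentary attention is bookkeeping of the standing assumptions — in particular checking that the domination principle needed by Theorem~\ref{pr-pos} is part of the blanket hypotheses of Section~\ref{i1} and thus need not be re-imposed in the statement of the corollary.
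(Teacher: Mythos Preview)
Your proof is correct and follows exactly the same approach as the paper: note that $\mu^A\in\Gamma_{A,\mu}^+$, then use Theorem~\ref{th-intr}(a) together with the principle of positivity of mass (Theorem~\ref{pr-pos}) to conclude $\mu^A(X)\leqslant\nu(X)$ for every $\nu\in\Gamma_{A,\mu}^+$. Your additional verification that the standing hypotheses of Section~\ref{i1} supply the energy and domination principles needed by Theorem~\ref{pr-pos} is accurate and simply makes explicit what the paper leaves implicit.
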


\begin{proof}
  Since $\mu^A\in\Gamma_{A,\mu}^+$, we only need to show that
  $\mu^A(X)\leqslant\nu(X)$  for all $\nu\in\Gamma_{A,\mu}^+$, but this follows immediately from Theorem~\ref{th-intr}(a) by use of
  the principle of positivity of mass (Theorem~\ref{pr-pos}).
\end{proof}

\begin{remark}\label{t-m-nonun} However, extremal property (\ref{eq-t-m}) cannot serve as an alternative characterization of inner balayage, for it does not determine $\mu^A$ uniquely within $\Gamma_{A,\mu}^+$. Indeed, consider the $\alpha$-Riesz kernel $|x-y|^{\alpha-n}$ of order $\alpha\leqslant2$, $\alpha<n$, on $\mathbb R^n$, $n\geqslant2$, and a proper, closed subset $A$ of $\mathbb R^n$ that is {\it not\/ $\alpha$-thin at infinity\/} \cite[Definition~3.1]{KM} (take, for instance, $A:=\{|x|\geqslant 1\}$). Then for any $\mu\in\mathcal E^+_{A^c}$,
\begin{equation}\label{eq-t-m1}\mu^A\ne\mu\text{ \ and \ }\mu^A(\mathbb R^n)=\mu(\mathbb R^n),\end{equation}
the former being obvious e.g.\ from Theorem~\ref{th-intr}(b), whereas the latter holds true by   \cite[Corollary~5.3]{Z-bal2}.
Noting that $\mu,\mu^A\in\Gamma_{A,\mu}^+$, we conclude by combining (\ref{eq-t-m}) with (\ref{eq-t-m1}) that there are actually infinitely many measures of minimum total mass in $\Gamma_{A,\mu}^+$, for so is every measure of the form $a\mu+b\mu^A$, where $a,b\in[0,1]$ and $a+b=1$.
\end{remark}

\subsection{The case of the strongly closed $\mathcal E^+_A$} In the particular case where the class $\mathcal E^+_A$ is {\it strongly closed}, assertions (b) and (c) in Theorem~\ref{th-intr} remain valid with $\mathcal E^+_A$ in place of $\mathcal E'_A$, and these refined characterizations of inner balayage do hold, for instance, if the set $A$ is closed or even quasiclosed (see Definition~\ref{def-quasi} and Lemma~\ref{l-quasi}).

\begin{definition}[{\rm Fuglede \cite{F71}}]\label{def-quasi}
A set $A\subset X$ is said to be {\it quasiclosed\/} if
it can be approximated in outer capacity by closed sets, that is, if
\begin{equation*}\label{def-q}
\inf\,\bigl\{c^*(A\bigtriangleup F):\ F\text{ closed, }F\subset X\bigr\}=0,
\end{equation*}
$\bigtriangleup$ being the symmetric difference.
\end{definition}

\begin{lemma}\label{l-quasi}If\/ $A\subset X$ is quasiclosed, the cone\/ $\mathcal E^+_A$ is strongly closed; hence
\[\mathcal E'_A=\mathcal E^+_A.\]
\end{lemma}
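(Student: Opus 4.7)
The plan is to reduce the lemma to the strong closedness of $\mathcal E^+_A$ (the equality $\mathcal E'_A = \mathcal E^+_A$ is then automatic), so I take an arbitrary $\mu$ in the strong closure of $\mathcal E^+_A$ and show that it is concentrated on $A$. To this end, fix a net $(\mu_s) \subset \mathcal E^+_A$ converging strongly to $\mu$. Since the ambient kernel is perfect, strong convergence in $\mathcal E^+$ entails vague convergence, so $\mu_s \to \mu$ vaguely; moreover, being strongly Cauchy, the net satisfies $\|\mu_s\| \leq M$ for some $M < \infty$ (eventually), and $\|\mu\| \leq M$ as well.

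The workhorse of the argument will be the Cauchy--Schwarz-type capacity estimate
\[
\nu(V) \leq \|\nu\|\sqrt{c(V)}\quad\text{for every open $V$ of finite capacity and every $\nu\in\mathcal E^+$.}
\]
I would derive this from the defining identity $w(K) = 1/c(K)$: for a compact $K \subset V$ and $\rho := \nu|_K \in \mathcal E^+_K$, normalizing $\rho/\rho(X) \in \breve{\mathcal E}^+_K$ yields $\rho(X) \leq \|\rho\|\sqrt{c(K)} \leq \|\nu\|\sqrt{c(V)}$; then by inner regularity of the Radon measure $\nu$ on the open set $V$, passing to the supremum over $K \uparrow V$ gives the claim. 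The compact-exhaustion step is necessary because $\nu(V)$ may a priori be infinite, so direct normalization of $\nu|_V$ is unavailable.

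Now, given $\varepsilon > 0$, I would use quasiclosedness to pick a closed $F$ with $c^*(A \triangle F) < \varepsilon$, and then, by the very definition of outer capacity, an open $U \supset A \triangle F$ with $c(U) < \varepsilon$. Since every $\mu_s$ is concentrated on $A$ and $F^c \cap A \subset A \triangle F \subset U$, the capacity estimate gives
\[
\mu_s(F^c) = \mu_s(A \cap F^c) \leq \mu_s(U) \leq M\sqrt{\varepsilon}.
\]
As $F^c$ is open, vague lower semicontinuity (Lemma~\ref{lemma-semi} applied to $\psi = 1_{F^c}$) yields $\mu(F^c) \leq \liminf_s \mu_s(F^c) \leq M\sqrt{\varepsilon}$; likewise the same estimate applied directly to $\mu$ gives $\mu(U) \leq M\sqrt{\varepsilon}$. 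Splitting $A^c = (F \setminus A) \cup (A^c \cap F^c)$ and noting $F \setminus A \subset A \triangle F \subset U$, subadditivity of outer measure yields
\[
\mu^*(A^c) \leq \mu^*(F \setminus A) + \mu^*(A^c \cap F^c) \leq \mu(U) + \mu(F^c) \leq 2M\sqrt{\varepsilon}.
\]
Letting $\varepsilon \downarrow 0$ forces $\mu^*(A^c) = 0$, whence $\mu$ is concentrated on $A$ and belongs to $\mathcal E^+_A$, as required.

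The main obstacle will be the careful bookkeeping between three different flavours of capacity: the inner capacity $c_*$ implicit in the definition of $\mathcal E^+_A$ via the notion of concentration on $A$, the outer capacity $c^*$ that enters the definition of quasiclosedness, and the capacity $c(U)$ of the open enveloping set used in the Cauchy--Schwarz estimate. The connecting thread is the immediate observation that a set of small outer capacity is contained in an open set of equally small capacity, which together with the capacity estimate on open sets lets one transfer the quasiclosedness hypothesis into a quantitative bound on the mass of $\mu_s$ (and of $\mu$) outside $F$.
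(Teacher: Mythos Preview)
Your proof is correct and follows a genuinely different route from the paper. Both arguments begin identically: since the kernel is perfect, strong convergence of $(\mu_s)\subset\mathcal E^+_A$ to $\mu\in\mathcal E^+$ entails vague convergence, so it suffices to show that $\mathcal E^+_A$ is vaguely closed. At this point the paper simply invokes Fuglede \cite[Corollary~6.2]{Fu4}, which asserts precisely that $\mathcal E^+_A$ is vaguely closed whenever $A$ is quasiclosed. You instead supply a direct, self-contained proof of this vague closedness, using the Cauchy--Schwarz estimate $\nu(U)\leqslant\|\nu\|\sqrt{c(U)}$ for open $U$ of finite capacity, together with the envelope $U\supset A\triangle F$ coming from the definition of quasiclosedness. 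Your argument thus unpacks what the cited corollary does behind the scenes, trading brevity for transparency; the paper's version is a one-line citation, while yours exhibits the mechanism by which small outer capacity of $A\triangle F$ controls the mass of $\mu$ off $A$. One small caveat: your intermediate inequality $\|\rho\|\leqslant\|\nu\|$ for $\rho=\nu|_K$ uses $\kappa\geqslant0$, which the paper only guarantees when $X$ is noncompact; for compact $X$ the cleaner route is to bound $\nu(K)$ directly via $\nu(K)\leqslant\int\kappa\gamma_K\,d\nu=\langle\gamma_K,\nu\rangle\leqslant\sqrt{c(K)}\,\|\nu\|$ (the exceptional set where $\kappa\gamma_K<1$ being $\nu$-negligible by Lemma~\ref{l-negl}), which avoids restricting $\nu$ altogether.
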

\begin{proof}
The kernel $\kappa$ being perfect, the strong convergence of any net $(\nu_s)\subset\mathcal E^+_A$ to $\nu_0\in\mathcal E^+$ implies the vague convergence to the same limit. Since for quasiclosed $A$,  $\mathcal E^+_A$ is vaguely closed in consequence of \cite[Corollary~6.2]{Fu4}, the lemma follows.
\end{proof}

\section{Characterizations of inner capacity for pse\-udo-pos\-itive kernels}\label{sec-pse}

In this section, $\kappa$ is {\it pse\-udo-pos\-itive\/} (Sect.~\ref{sec-1}). For any $\mu\in\mathcal E^+$ and $A\subset X$, denote
\begin{align}
&G(\mu):=2\mu(X)-\kappa(\mu,\mu)\in(-\infty,\infty],\label{def1}\\
&\widehat{\mathcal E}^+_A:=\bigl\{\nu\in\mathcal E^+_A: \ \kappa\nu\leqslant1\text{\ $\nu$-a.e.\ on $X$}\bigr\}.\label{def2}
\end{align}

\begin{theorem}\label{th-ap}
For any set\/ $A\subset X$ and any pse\-udo-pos\-itive kernel\/ $\kappa$,
\begin{equation}\label{G0}
c_*(A)=\sup_{\nu\in\mathcal E^+_A}\,G(\nu).\end{equation}
\end{theorem}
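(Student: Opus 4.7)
The plan is to reduce the identity to a one-variable optimization via the homogeneity of $G$ under scalar multiplication. Recall that $c_*(A) = 1/w(A)$ where $w(A) = \inf_{\mu \in \breve{\mathcal{E}}^+_A} \kappa(\mu,\mu)$, with the conventions from footnote~\ref{f1}. First I would dispose of the degenerate case: if $\mathcal{E}^+_A = \{0\}$, then $\breve{\mathcal{E}}^+_A = \varnothing$, so $w(A)=+\infty$ and $c_*(A)=0$; simultaneously $G(0) = 0$, giving equality. By Lemma~\ref{l-negl1} this case is equivalent to $c_*(A)=0$.

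Next, assume $\mathcal{E}^+_A \neq \{0\}$ and pick an arbitrary nonzero $\nu \in \mathcal{E}^+_A$, setting $m := \nu(X) > 0$ and $E := \kappa(\nu,\nu) \in [0,\infty)$ (recall $E \geq 0$ since $\kappa$ is pseudo-positive). For $t \geq 0$, the measure $t\nu$ again lies in $\mathcal{E}^+_A$, and
\[
G(t\nu) = 2tm - t^2 E.
\]
If $E > 0$, this quadratic in $t$ is maximized at $t = m/E$ with maximal value $m^2/E$; if $E = 0$, then $G(t\nu) = 2tm \to +\infty$ as $t \to \infty$. In either case,
\[
\sup_{t \geq 0}\, G(t\nu) = \frac{\nu(X)^2}{\kappa(\nu,\nu)},
\]
using the convention $m^2/0 = +\infty$.

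Now I would rewrite the right-hand side of (\ref{G0}) by grouping measures in $\mathcal{E}^+_A \setminus \{0\}$ according to their normalizations. Every nonzero $\nu \in \mathcal{E}^+_A$ can be written as $t\mu$ with $\mu := \nu/\nu(X) \in \breve{\mathcal{E}}^+_A$ and $t = \nu(X) > 0$; conversely, every $t \geq 0$ and $\mu \in \breve{\mathcal{E}}^+_A$ yields an element of $\mathcal{E}^+_A$. Hence, noting $G(0)=0$ and using the calculation above,
\[
\sup_{\nu \in \mathcal{E}^+_A}\, G(\nu) = \sup_{\mu \in \breve{\mathcal{E}}^+_A}\,\sup_{t \geq 0}\, G(t\mu) = \sup_{\mu \in \breve{\mathcal{E}}^+_A}\, \frac{1}{\kappa(\mu,\mu)} = \frac{1}{w(A)} = c_*(A).
\]

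There is no serious obstacle here; the only mildly delicate point is bookkeeping the degenerate cases $\kappa(\mu,\mu)=0$ (which forces $c_*(A) = +\infty$, matched on the right by letting $t\to\infty$) and $\breve{\mathcal{E}}^+_A = \varnothing$ (handled at the outset). Note that the argument uses only pseudo-positivity of $\kappa$ to ensure $E \geq 0$, and does not require the energy principle or any maximum principle, consistent with the generality claimed by the theorem.
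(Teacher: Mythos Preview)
Your scaling argument is essentially the one the paper uses, but there is a genuine gap at the sentence ``Every nonzero $\nu\in\mathcal E^+_A$ can be written as $t\mu$ with $\mu:=\nu/\nu(X)\in\breve{\mathcal E}^+_A$.'' This normalization requires $\nu(X)<\infty$, and nothing in the hypotheses guarantees that. A measure $\nu\in\mathcal E^+_A$ may perfectly well have $\nu(X)=\infty$ while $\kappa(\nu,\nu)<\infty$; such a $\nu$ does not lie on any ray through $\breve{\mathcal E}^+_A$, yet it contributes $G(\nu)=+\infty$ to the supremum on the right of~(\ref{G0}). For your chain of equalities to go through you must show that the presence of such an unbounded $\nu$ forces $c_*(A)=\infty$ as well. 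That implication is true (it is the easy direction (i)$\Rightarrow$(ii) of Theorem~\ref{capf}, valid for pseudo-positive kernels per Remark~\ref{pseudo}: restrict $\nu$ to compact $K\subset A$, normalize $\nu|_K/\nu(K)\in\breve{\mathcal E}^+_K$, and let $K\uparrow A$ using Lemma~\ref{222}), but you have not supplied it.

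The paper closes this gap in two different ways. The first proof (Section~\ref{proof}) never normalizes a general $\nu$: it treats compact $X$ directly (where all measures are bounded) and then, for noncompact $X$ with $\kappa\geqslant0$, reduces to compact $K\subset A$ via the identity $G(\nu)=\lim_{K\uparrow A}G(\nu|_K)$ from Lemma~\ref{l-lower}. The alternative proof (Section~\ref{p}) is exactly your argument, but preceded by Theorem~\ref{capf} to dispose of the case $c_*(A)=\infty$ and to guarantee boundedness of every $\nu\in\mathcal E^+_A$ when $c_*(A)<\infty$; note that this second route is carried out only under the stronger hypothesis of a strictly positive definite kernel. It is worth observing that Remark~\ref{Fuu} singles out precisely this kind of oversight in Fuglede's original argument as the reason the theorem needed a fresh proof.
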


\begin{remark}\label{Fuu}Formula (\ref{G0}) was declared by Fuglede \cite[p.~162, Eq.~(1)]{F1}; however, the proof given in \cite[p.~162, footnote~1]{F1} was incomplete, being actually only applicable to a relatively compact set $A$ and a strictly pse\-udo-pos\-itive kernel $\kappa$. This in turn has led to the incompleteness of some other proofs in \cite{F1}; thus the theories of inner and outer capacitary measures and capacitability \cite[Sections~4.2--4.5]{F1} turned out to be unjustified. In the present work we have restored Fuglede's theories in full generality by showing that formula (\ref{G0}) does indeed hold as stated (see Sect.~\ref{proof}).\end{remark}

\begin{theorem}\label{cor52}Given\/ $A\subset X$ and a pse\-udo-pos\-itive kernel\/ $\kappa$, assume that either\/ $\kappa\geqslant0$ or\/ $c(K)<\infty$ for all compact\/ $K\subset A$.\footnote{This occurs e.g.\ if either $X$ is noncompact, or $\kappa$ is strictly pse\-udo-pos\-itive, or $c_*(A)<\infty$.} Then
\begin{equation}\label{G00}c_*(A)=\sup_{\nu\in\widehat{\mathcal E}^+_A}\,\nu(X).\end{equation}
\end{theorem}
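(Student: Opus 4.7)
The plan is to establish the two inequalities in~(\ref{G00}) separately, relying on Theorem~\ref{th-ap} together with the identity~(\ref{153}). For the bound $\sup_{\nu\in\widehat{\mathcal E}^+_A}\nu(X)\leqslant c_*(A)$, I would take any $\nu\in\widehat{\mathcal E}^+_A$ and combine the defining inequality $\kappa\nu\leqslant1$ $\nu$-a.e.\ with Fubini's theorem to obtain $\kappa(\nu,\nu)=\int\kappa\nu\,d\nu\leqslant\nu(X)$, whence
\[G(\nu)=2\nu(X)-\kappa(\nu,\nu)\geqslant\nu(X).\]
Theorem~\ref{th-ap} then yields $\nu(X)\leqslant G(\nu)\leqslant c_*(A)$, so this direction is essentially a one-line calculation.

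For the reverse inequality, by~(\ref{153}) it will be enough to show that $\sup_{\nu\in\widehat{\mathcal E}^+_K}\nu(X)\geqslant c(K)$ for every compact $K\subset A$. If $c(K)<\infty$ (which exhausts the second branch of the hypothesis, and the $c(K)$-finite portion of the first), then for $c(K)=0$ the zero measure works, while for $0<c(K)<\infty$ the capacitary measure $\gamma_K$ exists and satisfies (\ref{eq-pr-c0})--(\ref{eq-pr-c3}); in particular, (\ref{eq-pr-c3}) gives $\kappa\gamma_K=1$ $\gamma_K$-a.e., so that $\gamma_K\in\widehat{\mathcal E}^+_K$, while (\ref{eq-pr-c0}) gives $\gamma_K(X)=c(K)$.

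The only delicate situation is $c(K)=\infty$ for some compact $K\subset A$, which under the hypothesis forces $\kappa\geqslant0$ and makes $c_*(A)=\infty$; the task is then to exhibit members of $\widehat{\mathcal E}^+_K$ of arbitrarily large total mass. Since $w(K)=0$, I would select $\mu_n\in\breve{\mathcal E}^+_K$ with $\varepsilon_n:=\kappa(\mu_n,\mu_n)\to0$ and truncate as follows: letting $F_n:=\{x\in X:\kappa\mu_n(x)\leqslant 1/t_n\}$ (closed, since $\kappa\mu_n$ is l.s.c.\ when $\kappa\geqslant0$) and $\nu_n:=t_n\mu_n|_{F_n}$, the pointwise bound $\kappa\nu_n\leqslant t_n\kappa\mu_n\leqslant 1$ on $F_n\supset S(\nu_n)$ gives $\nu_n\in\widehat{\mathcal E}^+_K$, while Chebyshev's inequality yields $\mu_n(X\setminus F_n)\leqslant t_n\varepsilon_n$ and hence $\nu_n(X)\geqslant t_n-t_n^2\varepsilon_n$. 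Choosing $t_n\to\infty$ with $t_n^2\varepsilon_n\to0$ (e.g.\ $t_n:=\varepsilon_n^{-1/3}$, or arbitrary $t_n\to\infty$ if $\varepsilon_n=0$) then makes this last quantity tend to $\infty$.

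The main obstacle is precisely this final case: in the absence of an equilibrium measure one must construct the competitors by hand out of a minimizing sequence, and the success of the construction hinges on a careful interplay of the l.s.c.\ of the potentials $\kappa\mu_n$ (so that the truncation sets are Borel), the hypothesis $\kappa\geqslant0$ (so that $\kappa\nu_n\leqslant t_n\kappa\mu_n$ pointwise and $\kappa(\nu_n,\nu_n)\leqslant t_n^2\varepsilon_n<\infty$), Chebyshev's inequality to control the mass lost in truncation, and a scaling $t_n$ that blows up just slowly enough relative to $\varepsilon_n^{-1}$.
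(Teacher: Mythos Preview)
Your proof is correct. The upper bound via $G(\nu)\geqslant\nu(X)$ and Theorem~\ref{th-ap}, and the lower bound via~(\ref{153}) and the capacitary measures $\gamma_K$ when $c(K)<\infty$, coincide with the paper's argument.

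The difference lies in how the case $c(K)=\infty$ (hence $\kappa\geqslant0$) is handled. The paper observes at the outset that one may assume $\kappa(\nu,\nu)>0$ for every bounded nonzero $\nu\in\mathcal E^+_A$: if this fails, there is a bounded $\xi\in\mathcal E^+_A$, $\xi\ne0$, with $\kappa(\xi,\xi)=0$, whence (since $\kappa\geqslant0$) $\kappa\xi=0$ $\xi$-a.e., so $h\xi\in\widehat{\mathcal E}^+_A$ for all $h>0$ and both sides of~(\ref{G00}) equal $+\infty$. Under this standing assumption, every compact $K\subset A$ automatically has $c(K)<\infty$ (the restricted kernel is strictly pseudo-positive, and the infimum $w(K)$ is attained by vague compactness of $\breve{\mathcal E}^+_K$ and lower semicontinuity of the energy), so the troublesome case simply does not arise. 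In effect, the zero-energy minimizer on $K$ that your construction is approximating actually \emph{exists}, and scaling it directly replaces the entire truncation--Chebyshev machinery. Your argument is more elaborate than necessary here, though it has the merit of not relying on attainment of the infimum and would survive in settings where $\breve{\mathcal E}^+_K$ fails to be vaguely compact.
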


\begin{corollary}\label{cor52'}For any pse\-udo-pos\-itive kernel\/ $\kappa$ and any compact set\/ $K\subset X$ with\/ $c(K)\in(0,\infty)$, extremal problems\/ {\rm(\ref{G0})} and\/ {\rm(\ref{G00})} are both solvable with the same maximizing measures, forming the class of capacitary measures\/ $\gamma_K$ on\/ $K$.\end{corollary}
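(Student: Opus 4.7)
The plan is to prove that the three classes of measures---capacitary measures $\gamma_K$ on $K$, maximizers of (\ref{G0}) over $\mathcal E^+_K$, and maximizers of (\ref{G00}) over $\widehat{\mathcal E}^+_K$---coincide, by combining the known properties (\ref{eq-pr-c0})--(\ref{eq-pr-c3}) of capacitary measures with Theorems~\ref{th-ap} and~\ref{cor52}. The hypothesis $c(K)\in(0,\infty)$ ensures both that a capacitary measure $\gamma_K$ exists (as recalled right after (\ref{W}), since $0<w(K)<\infty$) and that Theorem~\ref{cor52} applies (because $c(K)<\infty$).

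First I would verify that every capacitary measure $\gamma_K$ is optimal in both problems. By (\ref{eq-pr-c0}), $\gamma_K(X)=\kappa(\gamma_K,\gamma_K)=c(K)$, whence $G(\gamma_K)=2c(K)-c(K)=c(K)$; by Theorem~\ref{th-ap} this equals $\sup_{\nu\in\mathcal E^+_K}G(\nu)$. Moreover (\ref{eq-pr-c3}) gives $\kappa\gamma_K=1$ $\gamma_K$-a.e.\ on $X$, so $\gamma_K\in\widehat{\mathcal E}^+_K$, and then $\gamma_K(X)=c(K)=\sup_{\nu\in\widehat{\mathcal E}^+_K}\nu(X)$ by Theorem~\ref{cor52}.

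For the reverse inclusion I would exploit the elementary bound
\[\kappa(\nu,\nu)\geqslant\nu(X)^2/c(K),\]
valid for every nonzero $\nu\in\mathcal E^+_K$ (just normalize: $\nu/\nu(X)\in\breve{\mathcal E}^+_K$, so $\kappa(\nu,\nu)/\nu(X)^2\geqslant w(K)=1/c(K)$), with equality iff $\nu/\nu(X)$ is a minimizer for $w(K)$. If $\nu$ maximizes (\ref{G0}), then $G(\nu)=c(K)>0$ excludes $\nu=0$, and substituting $\kappa(\nu,\nu)=2\nu(X)-c(K)$ into the displayed bound yields $(\nu(X)-c(K))^2\leqslant0$; hence $\nu(X)=\kappa(\nu,\nu)=c(K)$, equality is forced, and $\nu/c(K)\in\breve{\mathcal E}^+_K$ is a capacitary distribution of unit mass. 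If instead $\nu$ maximizes (\ref{G00}), then $\nu(X)=c(K)>0$ and Tonelli applied to the $\nu$-a.e.\ bound $\kappa\nu\leqslant1$ gives $\kappa(\nu,\nu)\leqslant\nu(X)=c(K)$, while the displayed bound gives $\kappa(\nu,\nu)\geqslant c(K)$; equality again renders $\nu$ capacitary.

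I do not anticipate any real obstacle here: the proof is a short quadratic manipulation feeding off the universal energy lower bound intrinsic to the definition $c(K)=1/w(K)$, together with the two sup-formulas already established. Uniqueness of the maximizer is \emph{not} claimed, consistent with the noted possible non-uniqueness of the capacitary distribution $\lambda_K$ itself.
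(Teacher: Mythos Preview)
Your proof is correct and follows essentially the same route as the paper's. The paper's one-line argument simply points back to the computations (\ref{G0comp}), (\ref{mcompp'''}), and (\ref{mcompp''}) inside the proofs of Theorems~\ref{th-ap} and~\ref{cor52}: the derivation of (\ref{G0comp}) is exactly your quadratic optimization (write $\nu=q\mu$ with $\mu\in\breve{\mathcal E}^+_K$, so the maximizers of $G$ are the $c(K)\lambda_K=\gamma_K$), and the chain (\ref{mcompp'''})--(\ref{mcompp''}) is your sandwich $\nu(X)\leqslant G(\nu)\leqslant c(K)$ for $\nu\in\widehat{\mathcal E}^+_K$, which forces any maximizer of (\ref{G00}) to maximize (\ref{G0}) as well. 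Your write-up merely unpacks these references explicitly; the only cosmetic remark is that in the compact-$X$ case $\kappa$ may take negative values, so ``Tonelli'' is really Fubini (the negative part of $\kappa$ being bounded on $K\times K$), exactly as the paper tacitly uses when writing $\kappa(\nu,\nu)=\int\kappa\nu\,d\nu\leqslant\nu(X)$.
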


\begin{proof}
  This is clear from (\ref{G0comp}), (\ref{mcompp'''}), and (\ref{mcompp''}) (applied to compact $A:=K$).
\end{proof}

\begin{remark} Let now $A\subset X$ be {\it noncompact}, and $c_*(A)<\infty$. We shall show below that if moreover the kernel $\kappa$ is {\it perfect}, whereas the class $\mathcal E^+_A$ is {\it strongly closed\/} (or in particular if $A$ is quasiclosed), then each of extremal problems (\ref{G0}) and (\ref{G00}) is uniquely solvable with the same maximizing measure, serving simultaneously as the unique inner capacitary measure $\gamma_A$ (Theorem~\ref{cor3}). This result admits a further generalization to {\it arbitrary\/} $A$, which however additionally requires upon the perfect kernel $\kappa$ {\it the first and the second maximum principles\/} (see Theorem~\ref{G0'ar}).\end{remark}

\subsection{Proof of Theorem~\ref{th-ap}}\label{proof} Assume $c_*(A)>0$, since otherwise $\mathcal E^+_A=\{0\}$ by Lemma~\ref{l-negl1}, and (\ref{G0}) is trivial. There is also no loss of generality in assuming that
\begin{equation}\label{b1}\kappa(\nu,\nu)>0\text{ \ for all bounded\ }\nu\in\mathcal E^+_A, \ \nu\ne0,\end{equation}
for if not, both sides in (\ref{G0}) are equal to $+\infty$. In fact, if there exists $\xi\in\mathcal E^+_A$, $\xi\ne0$, with $\xi(X)<\infty$ and $\kappa(\xi,\xi)=0$, then $c_*(A)=\infty$, for $\xi':=\xi/\xi(X)\in\breve{\mathcal E}^+_A$, whence
\[0\leqslant w(A)\leqslant\kappa(\xi',\xi')=0.\] Noting also that $h\xi\in\mathcal E^+_A$ for all $h>0$, whereas $G(h\xi)=2h\xi(X)>0$, we arrive at (\ref{G0}) by letting $h\to\infty$.

Suppose first that the space $X$ is compact; then all the measures on $X$ must be {\it bounded}. Excluding $\nu=0$, for every given $\nu\in\mathcal E^+_A$ write $\nu=q\mu$, where $\mu\in\breve{\mathcal E}^+_A$ and $q:=\nu(X)\in(0,\infty)$. Then $G(\nu)$ takes the form
$2q-q^2\kappa(\mu,\mu)$,
which attains its maximum in $q\in(0,\infty)$ at $q_0:=1/\kappa(\mu,\mu)\in(0,\infty)$, cf.\ (\ref{b1}), and this maximum equals $q_0=1/\kappa(\mu,\mu)\in(0,\infty)$. Taking now the supremum over $\mu\in\breve{\mathcal E}^+_A$ we obtain
\begin{equation}\label{G0comp}
\sup_{\nu\in\mathcal E^+_A}\,G(\nu)=\Bigl[\inf_{\mu\in\breve{\mathcal E}^+_A}\,\kappa(\mu,\mu)\Bigr]^{-1}=c_*(A).
\end{equation}

To complete the proof, it remains to consider noncompact $X$; then $\kappa\geqslant0$ by convention.
By Lemma~\ref{l-lower} applied to $\psi:=\kappa$, and subsequently to $\psi:=1$,
\begin{equation}\label{L}
 G(\nu)=\lim_{K\uparrow A}\,G(\nu|_K)\text{ \ for all $\nu\in\mathcal E^+_A$}.
\end{equation}
Therefore,
\[\sup_{\nu\in\mathcal E^+_A}\,G(\nu)=\lim_{K\uparrow A}\,\Bigl(\sup_{\nu\in\mathcal E^+_K}\,G(\nu)\Bigr)=\lim_{K\uparrow A}\,c(K)=c_*(A),\]
where the first equality follows easily from (\ref{L}), while the second holds by (\ref{G0comp}), applied to compact $X:=K$ and the kernel $\kappa|_{K\times K}$.  Formula (\ref{G0}) is thus proved.

\subsection{Proof of Theorem~\ref{cor52}}\label{prooff}
Likewise as in the preceding proof, assume $c_*(A)>0$, since otherwise $\widehat{\mathcal E}^+_A=\mathcal E^+_A=\{0\}$ (Lemma~\ref{l-negl1}), and (\ref{G00}) is trivial.

In the case where $\kappa\geqslant0$, one can also assume (\ref{b1}) to be fulfilled,
for if not, both sides in (\ref{G00}) equal $+\infty$. In fact, if there exists a bounded measure $\xi\in\mathcal E^+_A$, $\xi\ne0$, with $\kappa(\xi,\xi)=0$, then $c_*(A)=\infty$ (see Sect.~\ref{proof}). On the other hand, since $\kappa\xi\geqslant0$ on $X$, we infer from $\kappa(\xi,\xi)=\int\kappa\xi\,d\xi=0$ that $\kappa\xi=0$ $\xi$-a.e. This in turn implies that $h\xi\in\widehat{\mathcal E}^+_A$ for all $h>0$, which leads to (\ref{G00}) by taking $h$ infinitely large.

We stress that  under condition (\ref{b1}), the restriction of $\kappa$ to $K\times K$, $K\subset A$ being compact, represents a strictly pse\-udo-pos\-itive kernel on the (compact) space $K$.

It follows that under either of the hypotheses of the theorem, for each $K\in\mathfrak C_A$ sufficiently large, we have $0<c(K)<\infty$, and hence a capacitary measure $\gamma_K$ on $K$ does exist. Since $\gamma_K(X)=c(K)$ while $\gamma_K\in\widehat{\mathcal E}^+_K$ (cf.\ (\ref{eq-pr-c0}) and (\ref{eq-pr-c3})),
\begin{equation}\label{mcompp'''}c(K)=\gamma_K(X)\leqslant\sup_{\nu\in\widehat{\mathcal E}^+_K}\,\nu(X)\leqslant\sup_{\nu\in\widehat{\mathcal E}^+_A}\,\nu(X).\end{equation}

On the other hand, it is clear from (\ref{def1}) and (\ref{def2}) that for every $\nu\in\widehat{\mathcal E}^+_A$,
\begin{equation}\label{estt}
G(\nu)\geqslant\nu(X),\end{equation}
because $\kappa(\nu,\nu)=\int\kappa\nu\,d\nu\leqslant\nu(X)$. On account of (\ref{G0}), we therefore get
\begin{equation}\label{mcompp''}
  \sup_{\nu\in\widehat{\mathcal E}^+_A}\,\nu(X)\leqslant\sup_{\nu\in\widehat{\mathcal E}^+_A}\,G(\nu)\leqslant\sup_{\nu\in\mathcal E^+_A}\,G(\nu)=c_*(A).
\end{equation}
Combining this with (\ref{mcompp'''}), and then letting $K\uparrow A$ results in (\ref{G00}).

\section{An alternative proof of Theorem~\ref{th-ap}}\label{sec-further}

Throughout this section, a kernel $\kappa$ is assumed to be {\it strictly positive definite}.
Then Theorem~\ref{th-ap} admits an alternative proof, to be particularly useful in the sequel.
It is based on the following crucial auxiliary result, being of independent interest.

\begin{theorem}\label{capf} For any\/ $A\subset X$,
the following two assertions are equivalent:
\begin{itemize}
   \item[{\rm(i)}] $c_*(A)<\infty$.
   \item[{\rm(ii)}] $\nu(X)<\infty$ for all\/ $\nu\in\mathcal E^+_A$.
   \end{itemize}
\end{theorem}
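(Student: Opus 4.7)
\emph{Direction (i) $\Rightarrow$ (ii).} The plan is to exploit the elementary capacitary inequality $\mu(X)^2\le c(K)\,\|\mu\|^2$, valid for every compact $K\subset X$ and every $\mu\in\mathcal E^+_K$ (a direct consequence of $w(K)=1/c(K)$ upon normalising $\mu$ to a probability in $\breve{\mathcal E}^+_K$). Given $\nu\in\mathcal E^+_A$, I apply this to the trace $\nu|_K\in\mathcal E^+_K$ for each compact $K\subset A$; using $\|\nu|_K\|\le\|\nu\|$ (which holds since $\kappa\ge 0$, the compact-$X$ case being trivial because strict positive definiteness forces $c(X)<\infty$ automatically) and monotonicity $c(K)\le c_*(A)$, I obtain $\nu(K)^2\le c_*(A)\,\|\nu\|^2$. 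Lemma~\ref{l-lower} applied with $\psi\equiv 1$ then yields $\nu(X)=\lim_{K\uparrow A}\nu(K)\le\sqrt{c_*(A)}\,\|\nu\|<\infty$.

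\emph{Direction (ii) $\Rightarrow$ (i).} I argue contrapositively: supposing $c_*(A)=\infty$, I shall construct $\nu\in\mathcal E^+_A$ with $\nu(X)=\infty$. A preliminary observation is that strict positive definiteness ensures $c(L)<\infty$ for every compact $L$ (if $\breve{\mathcal E}^+_L\ne\varnothing$ the vaguely attained energy-minimizer is a nonzero probability measure, hence of strictly positive energy, so $w(L)>0$). Combined with the strong subadditivity of inner capacity (Lemma~\ref{str}), this yields $c_*(A\setminus L)=\infty$ for every compact $L\subset X$. An inductive selection then produces pairwise disjoint compact sets $K_n\subset A$ with $c(K_n)\ge 16^n$, and for each one strict positive definiteness supplies a capacitary measure $\gamma_n:=\gamma_{K_n}\in\mathcal E^+_{K_n}$ with $\gamma_n(X)=\|\gamma_n\|^2=c(K_n)$.

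The candidate is $\nu:=\sum_{n\ge 1}a_n\gamma_n$ with $a_n:=c(K_n)^{-3/4}$, understood as the vague limit of its monotone increasing partial sums. Cauchy--Schwarz in the pre-Hilbert space $\mathcal E$ gives $\|\nu\|\le\sum a_n\|\gamma_n\|=\sum c(K_n)^{-1/4}\le\sum 2^{-n}<\infty$, so $\nu$ has finite energy, and monotone convergence delivers $\nu(X)=\sum a_n\gamma_n(X)=\sum c(K_n)^{1/4}=\infty$, while concentration of $\nu$ on $A$ is inherited from that of the $\gamma_n$'s on $K_n\subset A$. The \emph{main obstacle} is to verify that this formal series actually defines a \emph{Radon} measure, i.e.\ is finite on every compact $L\subset X$, since a priori the $K_n$'s might crowd into a single compact. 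This is overcome by reapplying the capacitary inequality to the trace $\gamma_n|_L\in\mathcal E^+_L$: since $\|\gamma_n|_L\|\le\|\gamma_n\|=\sqrt{c(K_n)}$, it gives $\gamma_n(L)^2\le c(L)\,c(K_n)$, so that $\nu(L)\le\sqrt{c(L)}\sum_n c(K_n)^{-1/4}<\infty$. Thus the \emph{single} summability condition $\sum c(K_n)^{-1/4}<\infty$ simultaneously controls the energy of $\nu$ and its mass on every compact, completing the construction.
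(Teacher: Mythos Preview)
Your proof is correct. The direction (i)$\Rightarrow$(ii) is essentially identical to the paper's: both rest on the elementary capacitary inequality $\nu(K)^2\le c(K)\,\|\nu|_K\|^2$, the paper phrasing it contrapositively and you directly.

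For (ii)$\Rightarrow$(i) the architecture is the same---pick pairwise disjoint compacts in $A$ of rapidly growing capacity and sum rescaled capacitary-type measures---but the justification that the sum is a \emph{Radon} measure is genuinely different. The paper nests its compacts as $L_j\subset K_{j+1}\setminus K_j$ and asserts that every compact subset of $X$ meets only finitely many $L_j$, so that $\omega(f)$ is a finite sum for each $f\in C_0(X)$; this locality claim is not argued in the paper and is not obvious for an arbitrary perfect kernel on an arbitrary locally compact space (it holds for the classical kernels on $\mathbb R^n$, where large capacity forces sets ``far out,'' but that reasoning is kernel-specific). You sidestep this entirely by re-applying the very same capacitary inequality to the trace $\gamma_n|_L$, obtaining $\gamma_n(L)\le\sqrt{c(L)\,c(K_n)}$, so that the single summability condition $\sum c(K_n)^{-1/4}<\infty$ simultaneously controls energy and mass on every compact. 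This is more self-contained and works regardless of how the $K_n$'s accumulate in $X$. Two cosmetic remarks: what you call ``Cauchy--Schwarz'' for $\|\nu\|\le\sum a_n\|\gamma_n\|$ is really the triangle inequality on partial sums followed by the principle of descent; and ``concentration of $\nu$ on $A$ is inherited'' deserves one line---since $\sum_{n>N}a_n\gamma_n(L)\to 0$ for every compact $L$, one gets $\nu(B\cap L)=\lim_N\nu_N(B\cap L)$ for Borel $B$, hence $\nu\bigl(L\setminus\bigcup_n K_n\bigr)=0$.
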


\begin{remark}\label{pseudo}
 Actually, (i)$\Rightarrow$(ii) holds true even for any pseu\-do-pos\-itive kernel. This follows in the same manner as in the proof below, the only difference being in replacing $w(K)>0$ by $w(K)\geqslant0$, see (\ref{th1111}).
\end{remark}

\begin{remark}\label{pseudo-no}
 On the contrary, (ii)$\Rightarrow$(i) would not hold in general if the requirement of the energy principle were omitted. (Take $X$ to be compact, $\kappa\equiv0$, $A:=X$. Then $c_*(A)=\infty$, though the measures on the compact space $X$ must be bounded.)
\end{remark}

\subsection{Proof of Theorem~\ref{capf}}\label{subs}
To prove (i)$\Rightarrow$(ii), let to the contrary there be $\nu\in\mathcal E^+_A$ with $\nu(X)=\infty$.
Then
\begin{equation}\label{th0}\lim_{K\uparrow A}\,\nu(K)=\nu_*(A)=\nu(X)=\infty,\end{equation}
and hence one can choose $K_0\in\mathfrak C_A$ so that $\nu(K)>0$ for all $K\geqslant K_0$ ($K\in\mathfrak C_A$).
Since for each of those $K$, the measure $\nu|_K/\nu(K)$ belongs to $\breve{\mathcal E}^+_K$,
\begin{equation}\label{th1111}\frac{\|\nu|_K\|^2}{\nu(K)^2}\geqslant w(K)>0.\end{equation}
Letting now $K\uparrow A$ we obtain $w(A)=0$ by virtue of (\ref{153}), (\ref{th0}), and Lemma~\ref{222}.
Thus $c_*(A)=\infty$, which contradicts (i).

Assume now that $c_*(A)=\infty$; we need to construct $\omega\in\mathcal E^+_A$ with $\omega(X)=\infty$.

Choose an increasing sequence of compact sets $K_j\subset A$, $j\in\mathbb N$, such that
\begin{equation}\label{a}\lim_{j\to\infty}\,c(K_j)=c_*(A)=\infty.\end{equation}
Since $c(K_j)<\infty$ by the energy principle, an application of Lemma~\ref{str} gives
\[c_*(A\setminus K_j)=\infty\text{ \ for all $j$}.\]
Therefore, there exists a compact set $L_j\subset A\setminus K_j$ with $c(L_j)>j^4$, or equivalently
\[w(L_j)<j^{-4}.\]
We can certainly assume that $L_j\subset K_{j+1}$, for if not, we replace $K_{j+1}$ by $K_{j+1}\cup L_j$, which does not affect limit relation (\ref{a}). Define
\[\omega(f):=\sum_{j\in\mathbb N}\,\lambda_{L_j}(f)\text{ \ for all $f\in C_0(X)$},\]
where $\lambda_{L_j}$ is the (unique) solution to extremal problem (\ref{W}) for the set $L_j$. Then $\omega$ represents a positive Radon measure on $X$, because any compact subset of $X$ has points in common with at most finitely many $L_j$. Furthermore, the measure $\omega$ is concentrated on the union of $L_j$, $j\in\mathbb N$, and hence on the (larger) set $A$. Besides,
\[\omega(X)=\sum_{j\in\mathbb N}\,\lambda_{L_j}(X)=\sum_{j\in\mathbb N}\,1=+\infty,\]
the sets $L_j$ being mutually disjoint. It thus remains to verify that
\begin{equation}\label{enf}
 \kappa(\omega,\omega)<\infty.
\end{equation}

To this end, observe that $\omega$ is the vague limit of the partial sums
$\lambda_N:=\sum_{j=1}^N\,\lambda_{L_j}$
(as $N\to\infty$), because for every $f\in C_0(X)$, $\omega(f)=\lambda_N(f)$ for all $N$ sufficiently large. Noting that, by the triangle inequality,
\[\|\lambda_N\|\leqslant\sum_{j=1}^N\,\|\lambda_{L_j}\|<\sum_{j=1}^N\,j^{-2}<\sum_{j\in\mathbb N}\,j^{-2}<\infty\text{ \ for all $N$},\]
we therefore conclude by the principle of descent that
\[\kappa(\omega,\omega)\leqslant\liminf_{N\to\infty}\,\kappa(\lambda_N,\lambda_N)<\infty,\]
whence (\ref{enf}). This establishes (ii)$\Rightarrow$(i), thereby completing the whole proof.

\subsection{An alternative proof of Theorem~\ref{th-ap}}\label{p} If $c_*(A)=0$, (\ref{G0}) is trivial, for then $\mathcal E^+_A=\{0\}$ by Lemma~\ref{l-negl1}. If now $c_*(A)=\infty$, there exists $\nu_0\in\mathcal E^+_A$ with $\nu_0(X)=\infty$ (Theorem~\ref{capf}), and (\ref{G0}) holds in the sense that its both sides are equal to $+\infty$.

It is left to verify (\ref{G0}) in the remaining case $0<c_*(A)<\infty$; then, again by Theorem~\ref{capf}, all $\nu\in\mathcal E^+_A$ must be {\it bounded}. Excluding the measure $\nu=0$, write $\nu=q\mu$, where $\mu\in\breve{\mathcal E}^+_A$ and $q:=\nu(X)\in(0,\infty)$. Then $G(\nu)$ takes the form $2q-q^2\|\mu\|^2$,
which attains its maximum in $q\in(0,\infty)$ at $q_0:=1/\|\mu\|^2\in(0,\infty)$ (the kernel being strictly positive definite), and this maximum equals $q_0=1/\|\mu\|^2\in(0,\infty)$.
Taking now the supremum over $\mu\in\breve{\mathcal E}^+_A$ we obtain
\begin{equation}\label{minsup}\sup_{\nu\in\mathcal E^+_A}\,G(\nu)=\Bigl[\inf_{\mu\in\breve{\mathcal E}^+_A}\,\|\mu\|^2\Bigr]^{-1}=c_*(A),\end{equation}
and the proof is complete.

\section{Basic facts on inner capacities and inner capacitary measures}\label{sec-2}

In the rest of the paper we shall assume a kernel $\kappa$ to be {\it perfect\/} (Sect.~\ref{sec-1}).

\subsection{Inner capacitary measures} Suppose for a moment that a set $A=K\subset X$ is compact, and  $w(K)<\infty$, cf.\ (\ref{W}). Then there exists the unique capacitary distribution $\lambda_K$ of unit mass on $K$, minimizing the energy over the class $\breve{\mathcal E}^+_K$:
\[\lambda_K\in\breve{\mathcal E}^+_K\text{ \ and \ }\|\lambda_K\|^2=w(K).\]
In fact, the existence of this $\lambda_K$ is due to the vague compactness of the class $\breve{\mathcal E}^+_K$, whereas the uniqueness follows easily by means of standard arguments based on the convexity of the class $\breve{\mathcal E}^+_K$ and a pre-Hilbert structure on the space $\mathcal E$ (cf.\ footnote~\ref{unique}).
Noting that then necessarily $w(K)>0$ by the energy principle, one can also consider the (normalized) capacitary measure $\gamma_K$ on the (compact) set $K$, introduced by
\begin{equation}\label{G}\gamma_K:=\lambda_K/w(K)=c(K)\lambda_K\in\mathcal E^+_K\end{equation}
and having properties (\ref{eq-pr-c0})--(\ref{eq-pr-c3}).

However, if $A\subset X$ is noncompact, $\breve{\mathcal E}^+_A$ is no longer vaguely compact, and the infimum in (\ref{W})
is in general not attained among the admissible measures~--- even for a perfect kernel.
To overcome this inconvenience, Fuglede has formulated the (dual) minimum energy problem over the class $\Gamma_A\subset\mathcal E$ (see below), which leads to the same concept of inner capacity, but it is already solvable (see \cite[Theorem~4.1]{F1}).

Given $A\subset X$, let $\Gamma_A$ denote the class of all (signed) $\nu\in\mathcal E$ having the property $\kappa\nu\geqslant1$ n.e.\ on $A$, and $\Gamma_A^+$ its subclass consisting of all {\it positive\/} measures:
\begin{equation}\label{def-in}\Gamma_A^+:=\bigl\{\nu\in\mathcal E^+: \ \kappa\nu\geqslant1\text{ \ n.e.\ on $A$}\bigr\}.\end{equation}
Due to the energy principle accepted in the present section, \cite[Theorem~4.1]{F1} can be specified as follows.

\begin{theorem}\label{prop.1.2'}For any\/ $A\subset X$,
\begin{equation*}\label{I}\inf_{\nu\in\Gamma_A^+}\,\|\nu\|^2=\inf_{\nu\in\Gamma_A}\,\|\nu\|^2=c_*(A).\end{equation*}
If now\/ $c_*(A)<\infty$, then each of these two infima is an actual minimum with the same unique minimizing measure\/ $\gamma_A\in\Gamma_A^+\subset\Gamma_A$, called the inner capacitary measure of\/ $A$ and having the properties\/\footnote{It follows that $c_*(A)<\infty\iff\Gamma_A^+\ne\varnothing\iff\Gamma_A\ne\varnothing$.\label{f5}}
\begin{align}
\label{pr1}&\gamma_A(X)=\|\gamma_A\|^2=c_*(A),\\
\label{pr0}&\kappa\gamma_A\geqslant1\text{ \ n.e.\ on\/ $A$},\\
\label{pr2}&\kappa\gamma_A\leqslant1\text{ \ on\/ $S(\gamma_A)$},\\
\label{pr3}&\kappa\gamma_A=1\text{ \ $\gamma_A$-a.e.\ on\/ $X$.}\end{align}
If moreover Frostman's maximum principle is fulfilled, then also\/\footnote{Then $\gamma_A$ is also said to be the inner {\it equilibrium\/} measure of $A$ (cf.\ footnote~\ref{fr}).}
\begin{equation}\label{F}\kappa\gamma_A=1\text{ \ n.e.\ on\/ $A$.}\end{equation}
\end{theorem}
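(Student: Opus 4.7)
The plan is to combine a Cauchy--Schwarz duality bound---yielding the lower estimate on both infima---with a strong-limit construction of the minimizer $\gamma_A$ along a compact exhaustion of $A$, crucially exploiting perfectness of $\kappa$. For the lower bound, I would fix $\nu\in\Gamma_A$ and $\mu\in\breve{\mathcal E}^+_A$ and observe that, since $\nu\in\mathcal E$, its potential $\kappa\nu$ is well defined and finite n.e.\ on $X$ (cf.\ \cite[p.~165, Corollary]{F1}); adjoining this exceptional set to $\{x\in A:\kappa\nu(x)<1\}$ still yields a set of inner capacity zero, which is $\mu$-$\sigma$-finite (as $\mu$ is bounded) and hence $\mu$-negligible by Lemma~\ref{l-negl}. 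Thus $\kappa\nu\geqslant 1$ holds $\mu$-a.e., and the Cauchy--Schwarz inequality in the pre-Hilbert space $\mathcal E$ gives
\[1=\mu(X)\leqslant\int\kappa\nu\,d\mu=\langle\nu,\mu\rangle\leqslant\|\nu\|\,\|\mu\|.\]
Since this holds for every $\mu\in\breve{\mathcal E}^+_A$, we obtain $\|\nu\|^2\geqslant 1/w(A)=c_*(A)$; combined with $\Gamma_A^+\subset\Gamma_A$, this bounds both infima below by $c_*(A)$ and forces $\Gamma_A=\varnothing$ whenever $c_*(A)=\infty$.

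Assume henceforth $c_*(A)<\infty$. For each compact $K\subset A$ with $c(K)\in(0,\infty)$, (\ref{G}) delivers $\gamma_K\in\mathcal E^+_K\cap\Gamma_K^+$ with $\|\gamma_K\|^2=c(K)$, and the preceding paragraph---applied on $K$ in place of $A$---identifies $\gamma_K$ as the unique minimizer of $\|\cdot\|^2$ on the convex class $\Gamma_K^+$ (convexity being furnished by Lemma~\ref{str}). For $K_1\subset K_2$ in $\mathfrak C_A$ we then have $\gamma_{K_2}\in\Gamma_{K_2}^+\subset\Gamma_{K_1}^+$, so $(\gamma_{K_1}+\gamma_{K_2})/2\in\Gamma_{K_1}^+$; comparing its norm-squared with $c(K_1)$ via the parallelogram identity yields
\[\|\gamma_{K_1}-\gamma_{K_2}\|^2\leqslant 2\bigl(c(K_2)-c(K_1)\bigr).\]
Since $c(K)\uparrow c_*(A)<\infty$ by (\ref{153}), the net $(\gamma_K)_{K\in\mathfrak C_A}$ is strong Cauchy, and perfectness of $\kappa$ delivers $\gamma_A\in\mathcal E^+$ as its simultaneous strong and vague limit, with $\|\gamma_A\|^2=c_*(A)$.

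The main obstacle is then to verify $\gamma_A\in\Gamma_A^+$, i.e.\ (\ref{pr0}): strong (or vague) convergence of $\gamma_K\to\gamma_A$ does not \emph{a priori} transfer the n.e.-inequality on $K$ up to one on all of $A$. My intended route is to first establish $\int\kappa\gamma_A\,d\mu\geqslant\mu(X)$ for every $\mu\in\mathcal E^+_A$: for compact $K'\subset K\subset A$ one has $\langle\gamma_K,\mu|_{K'}\rangle\geqslant\mu(K')$ (since $\kappa\gamma_K\geqslant 1$ n.e.\ on $K\supset K'$, hence $\mu|_{K'}$-a.e.\ by Lemma~\ref{l-negl}); passing $K\uparrow A$ via strong continuity of $\langle\cdot,\mu|_{K'}\rangle$ gives $\int\kappa\gamma_A\,d\mu|_{K'}\geqslant\mu(K')$, and a final passage $K'\uparrow A$ via Lemma~\ref{l-lower} delivers the assertion. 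Were $E:=\{x\in A:\kappa\gamma_A(x)<1\}$ of positive inner capacity, Lemma~\ref{l-negl1} would furnish $\mu\in\mathcal E^+_E\subset\mathcal E^+_A$ with $\mu\ne 0$; stratifying $E$ by $\{\kappa\gamma_A\leqslant 1-1/n\}$ would then produce $\int\kappa\gamma_A\,d\mu<\mu(X)$, contradicting the just-established inequality. Properties (\ref{pr2}), (\ref{pr3}), and (\ref{pr1}) would then follow from standard variational perturbation arguments on the convex cone $\Gamma_A^+$, together with the identity $\gamma_A(X)=\int\kappa\gamma_A\,d\gamma_A=\|\gamma_A\|^2=c_*(A)$ supplied by (\ref{pr3}).

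Uniqueness of $\gamma_A$ in $\Gamma_A$ is immediate from convexity of $\Gamma_A$ (Lemma~\ref{str}) and the parallelogram/energy-principle calculation of footnote~\ref{unique}. Finally, under Frostman's maximum principle, (\ref{pr2}) extends to $\kappa\gamma_A\leqslant 1$ on all of $X$, which together with (\ref{pr0}) yields (\ref{F}).
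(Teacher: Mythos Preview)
Your approach is correct and genuinely different from the paper's. The paper invokes Fuglede's Theorem~4.1 as a black box to obtain a (possibly signed) minimizer $\gamma\in\Gamma_A$ satisfying (\ref{pr1})--(\ref{pr3}), and then its sole contribution is to show $\gamma\geqslant0$: by Lemma~\ref{O}, $\gamma$ is the strong limit of the positive $\gamma_{K_j}$ along an increasing sequence with $c(K_j)\to c_*(A)$, hence (perfectness) also the vague limit, and $\mathfrak M^+$ is vaguely closed. Your route is self-contained: you reprove the lower bound (essentially Fuglede's Lemma~3.2.2) via Cauchy--Schwarz, construct $\gamma_A$ directly as the strong/vague limit over the full net $\mathfrak C_A$, and verify (\ref{pr0}) by the elegant integral inequality $\int\kappa\gamma_A\,d\mu\geqslant\mu(X)$ for $\mu\in\mathcal E^+_A$ together with stratification of the exceptional set. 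This buys independence from \cite{F1}, at the cost of more detail.

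One point needs correction: your claim that (\ref{pr2}) follows from ``standard variational perturbation arguments on the convex cone $\Gamma_A^+$'' does not work as stated. To obtain $\kappa\gamma_A\leqslant1$ on $S(\gamma_A)$ by perturbation you would need to \emph{decrease} $\gamma_A$ locally, but subtracting $t\tau$ with $\tau\leqslant\gamma_A$ lowers $\kappa(\gamma_A-t\tau)$ everywhere on $X$ (not just near $S(\tau)$), so the constraint $\kappa\nu\geqslant1$ n.e.\ on $A$ is generally violated. The correct route---which your construction already supports---is the principle of descent: for $x\in S(\gamma_A)$ and any neighborhood $V\ni x$, vague convergence gives $\liminf_K\gamma_K(V)\geqslant\gamma_A(V)>0$, so one can select $y_K\in S(\gamma_K)\cap V$ with $y_K\to x$, and then joint lower semicontinuity of $(y,\mu)\mapsto\kappa\mu(y)$ yields $\kappa\gamma_A(x)\leqslant\liminf_K\kappa\gamma_K(y_K)\leqslant1$ by (\ref{eq-pr-c2}). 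Once (\ref{pr2}) is in hand, (\ref{pr1}) follows from $\|\gamma_A\|^2\leqslant\gamma_A(X)$ (integrate (\ref{pr2})) combined with $\gamma_A(X)\leqslant\liminf_K\gamma_K(X)=c_*(A)$ (vague l.s.c.\ of total mass) and $\|\gamma_A\|^2=c_*(A)$; and (\ref{pr3}) is then immediate from (\ref{pr1}) and (\ref{pr2}).
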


\begin{proof}By \cite[Lemma~3.2.2]{F1} with $t=1$, $\|\nu\|^2\geqslant c_*(A)$ for all $\nu\in\Gamma_A$, and hence
\begin{equation}\label{ineq}\inf_{\nu\in\Gamma_A^+}\,\|\nu\|^2\geqslant\inf_{\nu\in\Gamma_A}\,\|\nu\|^2\geqslant c_*(A).\end{equation}
It is thus enough to consider the case where $c_*(A)<\infty$.

The uniqueness of the solution to the minimum energy problem over $\Gamma_A$ $(\Gamma_A^+)$ follows from the convexity of $\Gamma_A$ $(\Gamma_A^+)$, which in turn is obvious from Lemma~\ref{str}.

The (unique) measure $\gamma\in\Gamma_A$ of minimal energy in the class $\Gamma_A$ does exist, and moreover it satisfies (\ref{pr1})--(\ref{pr3}) (see \cite[Theorem~4.1]{F1}). It thus remains to prove that the same $\gamma$ solves the minimum energy problem over $\Gamma_A^+$. In view of (\ref{ineq}), this will follow once we show that this $\gamma$ is positive.
Assume $c_*(A)>0$, for if not, $\gamma=\gamma_A=0$.

Analyzing the proof of Theorem~4.1 in \cite{F1}, we deduce the following.

\begin{lemma}\label{O}The above measure\/ $\gamma$ is the strong limit of\/ $(\gamma_{K_j})$, where\/ $(K_j)$ is an increasing sequence of compact sets\/ $K_j\subset A$ with\/ $c(K_j)>0$ and such that\/\footnote{Such a sequence $(K_j)$ exists by virtue of (\ref{153}).}
\begin{equation}\label{limm}\lim_{j\to\infty}\,c(K_j)=c_*(A),\end{equation}
while\/ $\gamma_{K_j}=\lambda_{K_j}/w(K_j)\in\mathcal E^+_{K_j}$ is the\/ {\rm(}unique\/{\rm)} capacitary measure on\/ $K_j$.
\end{lemma}

Since the kernel is perfect, $\gamma_{K_j}\to\gamma$ also vaguely. Hence $\gamma$ is indeed positive, the cone $\mathfrak M^+$ being vaguely closed \cite[Section~III.1.9, Proposition~14]{B2}.
\end{proof}

\begin{remark}\label{rem4}In general, $\gamma_A$ is {\it not\/} concentrated on the set $A$ itself, but on its closure in $X$.
For instance, if $A:=B_r$ is an open ball $\{|x|<r\}$, $r>0$, in $\mathbb R^n$, $n\geqslant3$, and $\kappa(x,y)$ is the Newtonian kernel $|x-y|^{2-n}$, then $\gamma_{B_r}$ is the positive measure of total mass $r^{n-2}$, uniformly distributed over the sphere $\{|x|=r\}$. Thus $S(\gamma_{B_r})\cap B_r=\varnothing$.
\end{remark}

We shall now show that the problem of the determination of the inner capacitary measure for arbitrary $A$ can always be reduced to that for a (Borel) $K_\sigma$-set $A'\subset A$.\footnote{A set $A\subset X$ is said to be of class $K_\sigma$ (in short, a $K_\sigma$-set) if it is representable as a countable union of compact subsets.} This observation will be particularly useful in the proof of Theorem~\ref{th-cont2} (Sect.~\ref{proof-th-cont2}), dealing with the convergence of inner capacitary measures and their potentials.

\begin{theorem}\label{th-count}For arbitrary\/ $A\subset X$, there exists a\/ $K_\sigma$-set\/ $A'\subset A$ such that, for every\/ $H$ with the property\/ $A'\subset H\subset A$,
\begin{equation}\label{eq2}c_*(A')=c_*(H)=c_*(A).\end{equation}
If now\/ $c_*(A)<\infty$, then also
\begin{equation}\label{eq3}\gamma_{A'}=\gamma_H=\gamma_A.\end{equation}
\end{theorem}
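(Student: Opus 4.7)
The natural choice is $A':=\bigcup_{j\in\mathbb N}K_j$, where $(K_j)$ is an increasing sequence in $\mathfrak C_A$ with $c(K_j)\to c_*(A)$. Such a sequence exists by (\ref{153}): pick first any $(K_j')\subset\mathfrak C_A$ with $c(K_j')\to c_*(A)$ (possibly $=\infty$), then replace $K_j'$ by the finite union $K_1'\cup\cdots\cup K_j'$, which remains compact and contained in $A$ while $c(K_j)\geqslant c(K_j')$. The resulting $A'$ is a countable union of compact (hence closed) subsets of $A$, so it is a Borel $K_\sigma$-set with $A'\subset A$.

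\textbf{Verification of (\ref{eq2}).} Monotonicity of the inner capacity (immediate from the inclusion $\breve{\mathcal E}^+_{B_1}\subset\breve{\mathcal E}^+_{B_2}$ for $B_1\subset B_2$) gives, for any $H$ with $A'\subset H\subset A$,
\[c_*(A')\leqslant c_*(H)\leqslant c_*(A).\]
Conversely, $K_j\subset A'$ for every $j$, so $c_*(A')\geqslant c(K_j)\to c_*(A)$; hence equality holds throughout.

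\textbf{Verification of (\ref{eq3}) and expected obstacle.} Assume now $c_*(A)<\infty$, so that $\gamma_{A'},\gamma_H,\gamma_A$ all exist and are unique minimizers, cf.\ Theorem~\ref{prop.1.2'}. The key observation is that the admissible class satisfies
\[\Gamma_{B_2}^+\subset\Gamma_{B_1}^+\text{ \ whenever }B_1\subset B_2,\]
since the condition $\kappa\nu\geqslant1$ n.e.\ on the larger set $B_2$ \emph{a fortiori} holds n.e.\ on $B_1$. Applied to $A'\subset H$, this yields $\gamma_H\in\Gamma_H^+\subset\Gamma_{A'}^+$; moreover, by (\ref{pr1}) combined with (\ref{eq2}),
\[\|\gamma_H\|^2=c_*(H)=c_*(A').\]
Since $\gamma_{A'}$ is the \emph{unique} energy-minimizer in $\Gamma_{A'}^+$ by Theorem~\ref{prop.1.2'}, we must have $\gamma_H=\gamma_{A'}$. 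Specializing to $H:=A$ closes the chain $\gamma_{A'}=\gamma_A$, and then the case of a general $H$ yields $\gamma_{A'}=\gamma_H=\gamma_A$ as desired. I do not foresee a genuine obstacle: the whole argument rests on the uniqueness clause in Theorem~\ref{prop.1.2'} together with the elementary fact that enlarging the target set can only shrink the class $\Gamma^+_{\cdot}$; the only point requiring mild care is ensuring that the sequence $(K_j)$ is increasing, handled by the standard union trick above.
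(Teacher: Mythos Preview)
Your proof is correct and follows essentially the same route as the paper: construct $A'$ as the union of an increasing sequence $(K_j)\subset\mathfrak C_A$ with $c(K_j)\to c_*(A)$, deduce (\ref{eq2}) by monotonicity, and then obtain (\ref{eq3}) from the inclusion $\Gamma_A^+\subset\Gamma_H^+\subset\Gamma_{A'}^+$ together with the uniqueness clause in Theorem~\ref{prop.1.2'}. Your write-up is slightly more explicit (the union trick to force $(K_j)$ increasing, and spelling out $\|\gamma_H\|^2=c_*(H)=c_*(A')$), but the argument is the same.
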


\begin{proof} Given $A\subset X$, choose an increasing sequence $(K_j)\subset A$ of compact sets such that $c(K_j)\to c_*(A)$,
and let $A'$ denote the union of all those $K_j$.
By (\ref{153}) applied to $A'$ (or alternatively by \cite[Lemma~2.3.3]{F1}, the sets $K_j$ being universally measurable),
\[c_*(A')=\lim_{j\to\infty}\,c(K_j),\]
and hence $c_*(A')=c_*(A)$. For any $H$ such that $A'\subset H\subset A$, (\ref{eq2}) now follows
from the monotonicity of the inner capacity:
\[c_*(A')\leqslant c_*(H)\leqslant c_*(A)=c_*(A').\]

Assume moreover that $c_*(A)<\infty$. Noting that $\Gamma_A^+\subset\Gamma_H^+\subset\Gamma_{A'}^+$, we derive (\ref{eq3}) from (\ref{eq2}) by use of the uniqueness of the inner capacitary measure.
\end{proof}

\subsection{Relations between the minimum energy problems over $\Gamma^+_A$ and $\breve{\mathcal E}^+_A$}\label{sec-rel}
Despite the fact that the minimum energy problem over the class $\breve{\mathcal E}^+_A$ is in general unsolvable, it is often useful to consider it in parallel with the (dual) minimum energy problem over the class $\Gamma^+_A$.
To analyze relations between these two problems, assume $w(A)<\infty$, for then (and only then) $\breve{\mathcal E}^+_A$ is nonempty, and problem (\ref{W}) makes sense.
To avoid trivialities, assume also that $w(A)>0$, for if not, problem (\ref{W}) admits no solution by the energy principle. Thus, in the rest of this section,
\begin{equation}\label{ASS}0<c_*(A)<\infty,\end{equation}
and hence there exists the (unique) inner capacitary measure $\gamma_A\ne0$ (Theorem~\ref{prop.1.2'}).

\begin{definition}\label{def-min}
  A net $(\mu_s)_{s\in S}\subset\breve{\mathcal E}^+_A$ is said to be {\it minimizing\/} in problem (\ref{W}) if
\begin{equation}\label{8}\lim_{s\in S}\,\|\mu_s\|^2=w(A).\end{equation}
\end{definition}

As $\breve{\mathcal E}^+_A$ is nonempty, so is the set $\mathbb M(A)$ consisting of all minimizing nets $(\mu_s)_{s\in S}$.

\begin{lemma}\label{l-extr}For every\/ $(\mu_s)_{s\in S}\in\mathbb M(A)$,
\begin{equation}\label{limm'}\mu_s\to\xi_A\text{ \ strongly and vaguely},\end{equation}
where
\begin{equation}\label{extrcap}\xi_A:=\gamma_A/c_*(A)\in\mathcal E^+.
\end{equation}
\end{lemma}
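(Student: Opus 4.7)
The plan is to prove strong convergence $\mu_s\to\xi_A$ directly by expanding the energy norm $\|\mu_s-\xi_A\|^2$ and exploiting the variational properties of the inner capacitary measure $\gamma_A$ from Theorem~\ref{prop.1.2'}. Vague convergence will then be free, since for a perfect kernel every strongly convergent net in $\mathcal E^+$ has the same vague limit (see Sect.~\ref{sec-1}).

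The computation is $\|\mu_s-\xi_A\|^2=\|\mu_s\|^2-2\langle\mu_s,\xi_A\rangle+\|\xi_A\|^2$. First I record that, by (\ref{pr1}) and (\ref{extrcap}),
\[
\|\xi_A\|^2=\frac{\|\gamma_A\|^2}{c_*(A)^2}=\frac{c_*(A)}{c_*(A)^2}=\frac{1}{c_*(A)}=w(A).
\]
The crucial step is the lower bound $\langle\mu_s,\xi_A\rangle\geqslant\|\xi_A\|^2$. Since $\mu_s\in\breve{\mathcal E}^+_A$ is bounded and concentrated on $A$, and $\kappa\gamma_A\geqslant1$ nearly everywhere on $A$ by (\ref{pr0}), the exceptional set of inner capacity zero is $\mu_s$-negligible by Lemma~\ref{l-negl} (the set $\{\kappa\gamma_A<1\}$ is Borel because $\kappa\gamma_A$ is l.s.c., and $\mu_s$ is $\sigma$-finite because bounded). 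Hence $\kappa\gamma_A\geqslant1$ $\mu_s$-a.e., yielding
\[
\langle\mu_s,\xi_A\rangle=\frac{1}{c_*(A)}\int\kappa\gamma_A\,d\mu_s\geqslant\frac{\mu_s(X)}{c_*(A)}=\frac{1}{c_*(A)}=\|\xi_A\|^2.
\]

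Feeding this into the expansion gives $\|\mu_s-\xi_A\|^2\leqslant\|\mu_s\|^2-\|\xi_A\|^2=\|\mu_s\|^2-w(A)$, and the right-hand side tends to $0$ by the minimizing property (\ref{8}). Thus $\mu_s\to\xi_A$ strongly in $\mathcal E^+$. The net is then strongly Cauchy in $\mathcal E^+$, so by perfectness of $\kappa$ it converges vaguely to the same limit $\xi_A$, as claimed.

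The only subtle point is the measurability/negligibility step used to pass from ``$\kappa\gamma_A\geqslant1$ n.e.\ on $A$'' to ``$\kappa\gamma_A\geqslant1$ $\mu_s$-a.e.''; once Lemma~\ref{l-negl} is applied correctly (using boundedness of $\mu_s\in\breve{\mathcal E}^+_A$ and lower semicontinuity of $\kappa\gamma_A$), everything reduces to a one-line pre-Hilbert estimate. Note that the argument is essentially a ``one-sided parallelogram'' computation; no convexity argument inside $\breve{\mathcal E}^+_A$ is needed, which is fortunate because $\xi_A$ itself need not lie in $\breve{\mathcal E}^+_A$ (cf.\ Remark~\ref{rem4}).
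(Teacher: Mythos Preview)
Your proof is correct and takes a genuinely different route from the paper's. The paper first uses the parallelogram identity and convexity of $\breve{\mathcal E}^+_A$ to show that \emph{any two} minimizing nets satisfy $\|\mu_s-\nu_t\|\to0$, hence every minimizing net is strong Cauchy; it then invokes vague compactness (from the uniform bound $\mu_s(X)=1$) plus consistency to obtain a limit $\nu_0$, and finally \emph{identifies} $\nu_0$ with $\xi_A$ by exhibiting a specific minimizing sequence $(\lambda_{K_j})$ drawn from Lemma~\ref{O}. You bypass all of this by computing $\|\mu_s-\xi_A\|^2$ directly: the key inequality $\langle\mu_s,\xi_A\rangle\geqslant\|\xi_A\|^2$ follows from (\ref{pr0}) and Lemma~\ref{l-negl}, and then $\|\mu_s-\xi_A\|^2\leqslant\|\mu_s\|^2-w(A)\to0$. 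This is shorter and avoids both the parallelogram step and the appeal to Lemma~\ref{O}; it exploits that we already \emph{know} the candidate limit $\xi_A$ and its variational characterization via $\gamma_A$. The paper's approach, by contrast, is closer to a general template for minimum-energy problems (first establish Cauchy, then identify the limit), and makes the link to the compact approximants $\lambda_{K_j}$ explicit.
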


\begin{proof} We shall first show that for any $(\mu_s)_{s\in S}$ and $(\nu_t)_{t\in T}$
from $\mathbb M(A)$,
\begin{equation}\label{ST}
\lim_{(s,t)\in S\times T}\,\|\mu_s-\nu_t\|=0,
\end{equation}
$S\times T$ being the directed product of the directed sets $S$ and
$T$ (see e.g.\ \cite[p.~68]{K}).
In fact, due to the convexity of the class $\breve{\mathcal E}^+_A$, for any $(s,t)\in S\times T$ we have
\[\|\mu_s+\nu_t\|^2\geqslant4w(A).\]
Applying the parallelogram identity in $\mathcal E$ therefore gives
\[
0\leqslant\|\mu_s-\nu_t\|^2\leqslant-4w(A)+2\|\mu_s\|^2+2\|\nu_t\|^2,\]
which yields (\ref{ST}) when combined with (\ref{8}).

Taking the two nets in (\ref{ST}) to be equal, we see that every $(\nu_t)_{t\in T}\in\mathbb M(A)$ is strong Cauchy. But $(\nu_t)_{t\in T}$ is vaguely bounded, for $\nu_t(X)=1$ for all $t\in T$, and hence vaguely relatively compact \cite[Section~III.1.9, Proposition~15]{B2}. If $\nu_0$ denotes a vague limit point of $(\nu_t)_{t\in T}$, then  $\nu_t\to\nu_0$ strongly, the kernel $\kappa$ being perfect. The same unique $\nu_0$ also serves as the strong limit of any other net $(\mu_s)_{s\in S}\in\mathbb M(A)$, which is obvious from (\ref{ST}). Since the strong convergence implies the vague convergence,
\[\mu_s\to\nu_0\text{ \ strongly and vaguely}.\]

To prove that this $\nu_0$ actually equals $\xi_A$, $\xi_A$ being introduced by (\ref{extrcap}), it is thus enough to construct a minimizing net converging to $\xi_A$ strongly. Let a sequence $(K_j)$ of compact subsets of $A$ be as in Lemma~\ref{O}; it is clear from (\ref{limm}) that
\[(\lambda_{K_j})\in\mathbb M(A),\]
$\lambda_{K_j}\in\breve{\mathcal E}^+_{K_j}\subset\breve{\mathcal E}^+_A$ solving problem (\ref{W}) for the set $K_j$. But according to Lemma~\ref{O},
$c(K_j)\lambda_{K_j}\to\gamma_A$ strongly, which combined with (\ref{limm}) and (\ref{extrcap}) shows that, indeed,
\[\lambda_{K_j}\to\xi_A\text{ \ strongly},\]
thereby completing the proof of the lemma.
\end{proof}

\begin{remark}\label{extr-rem}
In view of (\ref{limm'}), it is natural to call the measure $\xi_A\in\mathcal E^+$ {\it extremal\/} in problem (\ref{W}). It is obvious from (\ref{pr1}) and (\ref{extrcap}) that
\begin{equation}\label{and}\xi_A(X)=1,\quad\|\xi_A\|^2=w(A).\end{equation}
Nevertheless, the extremal measure $\xi_A$ may not be the solution to problem (\ref{W}), being in general not concentrated on $A$ (cf.\  Remark~\ref{rem4}).
 \end{remark}

\begin{theorem}\label{cor1}Under assumption\/ {\rm(\ref{ASS})}, the three assertions are equivalent:
\begin{itemize}\item[{\rm(i)}] The extremal measure\/ $\xi_A$ is concentrated on\/ $A$.
\item[{\rm(ii)}]Problem\/ {\rm(\ref{W})} has the\/ {\rm(}unique\/{\rm)} minimizing measure\/ $\lambda_A\in\breve{\mathcal E}^+_A$:
\[\|\lambda_A\|^2=\min_{\nu\in\breve{\mathcal E}^+_A}\,\|\nu\|^2,\]
called the capacitary distribution of unit mass on\/ $A$.
\item[{\rm(iii)}] The inner capacitary measure\/ $\gamma_A$ serves simultaneously as the\/ {\rm(}unique\/{\rm)} solution to the minimum energy problem over the class\/ $\bigl\{\nu\in\mathcal E^+_A: \ \nu(X)=c_*(A)\bigr\}$.
\end{itemize}
If any of these\/ {\rm(i)}--{\rm(iii)} holds true, then actually
\begin{equation}\label{3}\lambda_A=\xi_A=\gamma_A/c_*(A).
\end{equation}
\end{theorem}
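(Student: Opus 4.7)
The plan is to establish the cyclic pattern (ii)$\Rightarrow$(i)$\Rightarrow$(iii)$\Rightarrow$(ii), with the identity $\lambda_A=\xi_A=\gamma_A/c_*(A)$ dropping out along the way. The key engine is Lemma~\ref{l-extr}, which identifies $\xi_A$ as the common strong (and vague) limit of every minimizing net for problem~(\ref{W}), together with the energy principle and a simple rescaling.

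First I would prove (ii)$\Rightarrow$(i). Assume $\lambda_A\in\breve{\mathcal E}^+_A$ attains $\|\lambda_A\|^2=w(A)$. Then the constant net $(\lambda_A)$ (indexed by any directed set) belongs to $\mathbb M(A)$, so by Lemma~\ref{l-extr} it converges strongly to $\xi_A$. Since a constant net converges strongly to its value, $\|\lambda_A-\xi_A\|=0$, and the energy principle gives $\lambda_A=\xi_A$. In particular, $\xi_A$ is concentrated on $A$, establishing (i), and already yielding $\lambda_A=\xi_A=\gamma_A/c_*(A)$ by definition~(\ref{extrcap}).

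Next I would prove (i)$\Rightarrow$(iii). Assume $\xi_A\in\mathcal E^+_A$. By~(\ref{extrcap}) and~(\ref{pr1}), $\gamma_A=c_*(A)\xi_A$ then lies in $\mathcal E^+_A$ with $\gamma_A(X)=c_*(A)$, so it is admissible in the problem of~(iii). To see it is a minimizer, observe that the substitution $\nu=c_*(A)\mu$ sets up a bijection between $\breve{\mathcal E}^+_A$ and the class $\mathcal C_A:=\{\nu\in\mathcal E^+_A:\nu(X)=c_*(A)\}$, under which $\|\nu\|^2=c_*(A)^2\|\mu\|^2$. Hence
\[
\inf_{\nu\in\mathcal C_A}\|\nu\|^2=c_*(A)^2\,w(A)=c_*(A),
\]
which equals $\|\gamma_A\|^2$ by~(\ref{pr1}); so $\gamma_A$ is a minimizer. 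Uniqueness follows from the convexity of $\mathcal C_A$ (itself a consequence of Lemma~\ref{str}) via the parallelogram identity in $\mathcal E$, exactly as in footnote~\ref{unique}.

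Finally I would prove (iii)$\Rightarrow$(ii) by reversing the same scaling: if $\gamma_A$ uniquely minimizes the energy on $\mathcal C_A$, then $\mu:=\gamma_A/c_*(A)\in\breve{\mathcal E}^+_A$ uniquely minimizes the energy on $\breve{\mathcal E}^+_A$, with $\|\mu\|^2=w(A)$. By the (ii)$\Rightarrow$(i) step already proven, this $\mu$ must coincide with $\xi_A$, which closes the cycle and confirms the formula $\lambda_A=\xi_A=\gamma_A/c_*(A)$ in~(\ref{3}). No step presents a genuine obstacle; the only subtle point is the passage (ii)$\Rightarrow$(i), where one must notice that a \emph{constant} net qualifies as a minimizing net in the sense of Definition~\ref{def-min}, allowing Lemma~\ref{l-extr} to be invoked.
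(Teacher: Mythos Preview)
Your proof is correct and follows essentially the same route as the paper. The paper establishes (i)$\Leftrightarrow$(ii) directly---using (\ref{and}) for (i)$\Rightarrow$(ii) and the constant minimizing net with Lemma~\ref{l-extr} for (ii)$\Rightarrow$(i)---and then observes (ii)$\Leftrightarrow$(iii) ``for reasons of homogeneity''; you organize this as a cycle (ii)$\Rightarrow$(i)$\Rightarrow$(iii)$\Rightarrow$(ii), but the underlying ingredients are identical. One small remark: the convexity of $\mathcal C_A$ (and of $\breve{\mathcal E}^+_A$) comes simply from the fact that total mass is a linear functional, so your appeal to Lemma~\ref{str} there is unnecessary.
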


\begin{proof} The fact that (i) implies (ii) (with $\lambda_A:=\xi_A$)
follows directly from (\ref{and}).

Let now (ii) hold. Since the trivial sequence $(\lambda_A)$ is minimizing:
$(\lambda_A)\in\mathbb M(A)$,
we conclude from Lemma~\ref{l-extr} that then necessarily $\lambda_A=\xi_A$ (the strong topology on $\mathcal E$ being Hausdorff), whence (i). Also, combining $\lambda_A=\xi_A$ with (\ref{extrcap}) gives (\ref{3}).

Finally, the equivalence of (ii) and (iii) is obvious
for reasons of homogeneity.
\end{proof}

\begin{corollary}\label{cor1qu}Assume\/ {\rm(\ref{ASS})} is fulfilled. For\/ {\rm(i)}--{\rm(iii)} in Theorem\/~{\rm\ref{cor1}} to hold, it is sufficient that\/ $\mathcal E^+_A$ be strongly closed\/ {\rm(}or in particular that\/ $A$ be quasiclosed\/{\rm)}.\end{corollary}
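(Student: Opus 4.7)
The plan is to deduce the corollary directly from Lemma~\ref{l-extr} together with the hypothesis that $\mathcal{E}^+_A$ is strongly closed. Under assumption~(\ref{ASS}) we have $w(A)\in(0,\infty)$, so the class $\breve{\mathcal E}^+_A$ is nonempty, and hence the set $\mathbb{M}(A)$ of minimizing nets for problem~(\ref{W}) is nonempty as well. I would pick any such minimizing net $(\mu_s)_{s\in S}\in\mathbb{M}(A)$; by Lemma~\ref{l-extr} it converges strongly (and vaguely) to the extremal measure $\xi_A=\gamma_A/c_*(A)\in\mathcal{E}^+$.

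Now I would observe that each $\mu_s$ belongs to $\breve{\mathcal E}^+_A\subset\mathcal{E}^+_A$. Since $\mathcal{E}^+_A$ is by hypothesis strongly closed, the strong limit $\xi_A$ must also lie in $\mathcal{E}^+_A$; in particular, $\xi_A$ is concentrated on $A$. This is exactly assertion~(i) of Theorem~\ref{cor1}, and the equivalence of (i)--(iii) established there then yields all three conclusions simultaneously, together with the identification $\lambda_A=\xi_A=\gamma_A/c_*(A)$.

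For the parenthetical assertion, I would invoke Lemma~\ref{l-quasi}: when $A$ is quasiclosed, $\mathcal{E}^+_A=\mathcal{E}'_A$ is strongly closed, so the hypothesis of the corollary is satisfied and the previous paragraph applies.

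There is essentially no obstacle here, since Lemma~\ref{l-extr} already performs the analytic work of producing a strongly convergent minimizing net with limit $\xi_A$; the only conceptual point is the verification that strong closure of $\mathcal{E}^+_A$ is precisely what is needed to force $\xi_A$ to be concentrated on $A$, converting the extremal measure into a genuine solution $\lambda_A$ of problem~(\ref{W}). The quasiclosed case requires nothing beyond citing Lemma~\ref{l-quasi}.
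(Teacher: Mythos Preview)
Your proof is correct and follows essentially the same route as the paper: use Lemma~\ref{l-extr} to see that any minimizing net converges strongly to $\xi_A$, invoke strong closedness of $\mathcal E^+_A$ to force $\xi_A\in\mathcal E^+_A$, conclude (i), and then apply Theorem~\ref{cor1} for (ii)--(iii); the quasiclosed case is handled via Lemma~\ref{l-quasi}. The paper's argument is the same, only stated more tersely.
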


\begin{proof} If $\mathcal E^+_A$ is strongly closed (or in particular if $A$ is quasiclosed, cf.\ Lemma~\ref{l-quasi}), then the extremal measure, being the strong limit of any minimizing net, must be concentrated on $A$. Thus (i) holds true, and hence so do both (ii) and (iii).
\end{proof}

\section{Further alternative characterizations of $\gamma_A$}\label{sec-ch}

As in Sect.~\ref{sec-2}, in the present section a kernel $\kappa$ is assumed to be {\it perfect}.
We first establish necessary and sufficient conditions for (extremal) problems (\ref{G0}) and (\ref{G00}) to be uniquely solvable (Theorem~\ref{th-ch}), which will further be utilized to characterize the inner capacitary measure $\gamma_A$ for
 $A$ with the strongly closed $\mathcal E^+_A$ (Theorem~\ref{cor3}).

\begin{theorem}\label{th-ch}For any\/ $A\subset X$ with\/ $0<c_*(A)<\infty$,\footnote{If $c_*(A)=0$, then $\mathcal E^+_A=\widehat{\mathcal E}^+_A=\{0\}$ (Lemma~\ref{l-negl1}), and problems (\ref{G0}) and (\ref{G00}) are obviously solvable with the same unique solution $\theta_A=\sigma_A=0$ \ $({}=\gamma_A)$.}
each of assertions\/ {\rm(i)}--{\rm(iii)} in Theorem\/~{\rm\ref{cor1}} is equivalent to either of the following\/ {\rm(iv)} and\/ {\rm(v)}:
\begin{itemize}
  \item[{\rm(iv)}] There exists a solution\/ $\theta_A$ to problem\/~{\rm(\ref{G0})}, that is, $\theta_A\in\mathcal E^+_A$ and
  \[G(\theta_A)=\max_{\nu\in\mathcal E^+_A}\,G(\nu)\quad\bigl({}=c_*(A)\bigr).\]
  \item[{\rm(v)}] There exists a solution\/ $\sigma_A$ to problem\/~{\rm(\ref{G00})}, that is, $\sigma_A\in\widehat{\mathcal E}^+_A$ and
  \[\sigma_A(X)=\max_{\nu\in\widehat{\mathcal E}^+_A}\,\nu(X)\quad\bigl({}=c_*(A)\bigr).\]
  \end{itemize}
  If any of these\/ {\rm(i)}--{\rm(v)} is fulfilled, then\/ $\theta_A$ and\/ $\sigma_A$ are unique, and moreover
  \[\theta_A=\sigma_A=\gamma_A=c_*(A)\lambda_A.\]
  \end{theorem}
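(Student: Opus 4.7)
The plan is to deduce (iv) and (v) from the machinery already in place by two elementary passages. Under the hypothesis $0<c_*(A)<\infty$, Theorems~\ref{th-ap} and~\ref{cor52} already identify both suprema in (iv) and (v) with $c_*(A)$, so the actual content of the theorem is the \emph{attainment} of these suprema, the uniqueness of the maximizers, and their explicit form $\theta_A=\sigma_A=\gamma_A=c_*(A)\lambda_A$. Uniqueness, incidentally, comes for free: on the convex cone $\mathcal{E}^+_A$ the functional $G$ is strictly concave by the energy principle, so the parallelogram identity (cf.\ footnote~\ref{unique}) forces at most one maximizer of $G$, and once (iv) $\Leftrightarrow$ (v) is established the uniqueness of $\sigma_A$ will follow from that of $\theta_A$.

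For the implication (iv) $\Leftrightarrow$ (ii), I would replay the scaling argument from Sect.~\ref{p}: writing any nonzero $\nu\in\mathcal{E}^+_A$ as $\nu=q\mu$ with $\mu\in\breve{\mathcal{E}}^+_A$ and $q=\nu(X)>0$, one has $G(\nu)=2q-q^2\|\mu\|^2$, whose supremum in $q\in(0,\infty)$ equals $1/\|\mu\|^2$ and is attained at $q_0=1/\|\mu\|^2$. Hence a maximizer $\theta_A$ for $G$ on $\mathcal{E}^+_A$ exists if and only if the infimum $w(A)=\inf_{\mu\in\breve{\mathcal{E}}^+_A}\|\mu\|^2$ is attained at some $\lambda_A\in\breve{\mathcal{E}}^+_A$, that is, precisely (ii); and in that case $\theta_A=\lambda_A/w(A)=c_*(A)\lambda_A$. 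Invoking Theorem~\ref{cor1} together with the identity $\lambda_A=\xi_A=\gamma_A/c_*(A)$ from (\ref{3}) finally identifies $\theta_A$ with $\gamma_A$.

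For (v) $\Rightarrow$ (iv), I would use the pointwise inequality (\ref{estt}): any $\sigma_A\in\widehat{\mathcal{E}}^+_A$ with $\sigma_A(X)=c_*(A)$ satisfies
\[G(\sigma_A)\geqslant\sigma_A(X)=c_*(A)=\sup_{\nu\in\mathcal{E}^+_A}\,G(\nu),\]
so $\sigma_A$ is automatically a maximizer of $G$ over $\mathcal{E}^+_A$, whence $\sigma_A=\theta_A$ by uniqueness. For the converse (iv) $\Rightarrow$ (v), having already shown $\theta_A=\gamma_A$ in the previous paragraph, the properties (\ref{pr1}) and (\ref{pr3}) of $\gamma_A$ supplied by Theorem~\ref{prop.1.2'} guarantee $\kappa\gamma_A\leqslant 1$ $\gamma_A$-a.e.\ together with $\gamma_A(X)=c_*(A)$; thus $\sigma_A:=\gamma_A$ belongs to $\widehat{\mathcal{E}}^+_A$ and solves problem~(\ref{G00}).

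I do not anticipate any serious obstacle: the reparametrization is a one-line calculus argument, and (v) $\Rightarrow$ (iv) reduces to the one-line sandwich above. The only real point of care is the bookkeeping of the three different normalizations $\lambda_A$, $\xi_A=\gamma_A/c_*(A)$, $\gamma_A$ so that the final identity $\theta_A=\sigma_A=\gamma_A=c_*(A)\lambda_A$ emerges unambiguously from the chain of equivalences.
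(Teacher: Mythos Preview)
Your proposal is correct and follows essentially the same route as the paper: the scaling argument from Sect.~\ref{p} for (ii)$\Leftrightarrow$(iv), the properties (\ref{pr1}) and (\ref{pr3}) of $\gamma_A$ for (iv)$\Rightarrow$(v), and the sandwich via (\ref{estt}) for (v)$\Rightarrow$(iv). The only minor addition is your independent uniqueness argument for $\theta_A$ via strict concavity of $G$; the paper instead reads uniqueness off directly from the scaling bijection $\theta_A=c_*(A)\lambda_A$ and the (already known) uniqueness of $\lambda_A$, which is slightly more economical but amounts to the same thing.
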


\begin{proof}
It follows from the proof given in Sect.~\ref{p} (see in particular formula (\ref{minsup})) that the solution $\theta_A$ to problem (\ref{G0}) exists if and only if so does
the solution $\lambda_A$ to problem (\ref{W}), and in the affirmative case they are related to one another as follows:
\[\theta_A=c_*(A)\lambda_A=\gamma_A,\]
the latter equality being valid by (\ref{3}). This proves the equivalence of (ii) and (iv), as well as the uniqueness of $\theta_A$, the solution $\lambda_A$ to problem (\ref{W}) being unique.

Furthermore, (iv) implies (v) with $\sigma_A:=\gamma_A=\theta_A$. Indeed,
$\gamma_A\in\widehat{\mathcal E}^+_A$ by (\ref{pr3}), which together with (\ref{G00}) and (\ref{pr1}) gives
\[\gamma_A(X)\leqslant\sup_{\nu\in\widehat{\mathcal E}^+_A}\,\nu(X)=c_*(A)=\gamma_A(X).\]

Finally, if (v) is fulfilled for some $\sigma_A\in\widehat{\mathcal E}^+_A$, then, by (\ref{estt}) with $\nu:=\sigma_A$,
\[c_*(A)=\sigma_A(X)\leqslant G(\sigma_A)\leqslant\sup_{\nu\in\mathcal E^+_A}\,G(\nu)=c_*(A),\]
and hence the same $\sigma_A$ must be the (unique) solution to problem (\ref{G0}).
\end{proof}

\begin{theorem}\label{cor3} Given a set\/ $A\subset X$ with\/ $c_*(A)<\infty$, assume that the class\/ $\mathcal E^+_A$ is strongly closed\/ {\rm(}or in particular that\/ $A$ is quasiclosed\/{\rm)}. Then
the inner capacitary measure\/ $\gamma_A$ can alternatively be characterized as the unique solution to either of extremal problems\/ {\rm(\ref{G0})} or\/ {\rm(\ref{G00})}; that is, $\gamma_A\in\widehat{\mathcal E}^+_A\subset\mathcal E^+_A$ and
  \begin{align}
    &\max_{\nu\in\mathcal E^+_A}\,G(\nu)=G(\gamma_A)\quad\bigl({}=c_*(A)\bigr),\label{cl1}\\
    &\max_{\nu\in\widehat{\mathcal E}^+_A}\,\nu(X)=\gamma_A(X)\quad\bigl({}=c_*(A)\bigr).\label{cl2}
  \end{align}
Furthermore, $\gamma_A$ is uniquely determined within\/ $\mathcal E^+_A$ by each of\/ {\rm(a)} or\/ {\rm(b)}, where
\begin{itemize}
\item[{\rm(a)}] $\gamma_A(X)\geqslant\|\gamma_A\|^2\geqslant c_*(A)$.
\item[{\rm(b)}] $\kappa\gamma_A\geqslant1$ n.e.\ on $A$, and\/ $\kappa\gamma_A\leqslant1$ $\gamma_A$-a.e.
\end{itemize}
If moreover Frostman's maximum principle is fulfilled, then\/ $\gamma_A$ is the only measure in\/ $\mathcal E^+_A$ having the property
\begin{itemize}
\item[{\rm(c)}] $\kappa\gamma_A=1$ n.e.\ on\/ $A$.
\end{itemize}
\end{theorem}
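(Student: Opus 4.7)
The plan is to chain previously established results, reducing the refined characterizations (\ref{cl1}), (\ref{cl2}), (a), (b), (c) to the more primitive machinery of Theorem~\ref{cor1} and Theorem~\ref{th-ch}. For (\ref{cl1}) and (\ref{cl2}), assuming $c_*(A) > 0$ (the case $c_*(A) = 0$ being trivial via Lemma~\ref{l-negl1}), strong closedness of $\mathcal{E}^+_A$ triggers Corollary~\ref{cor1qu}, which yields assertions (i)--(iii) of Theorem~\ref{cor1}; Theorem~\ref{th-ch} then delivers the unique solutions $\theta_A = \sigma_A = \gamma_A$. Membership $\gamma_A \in \mathcal{E}^+_A$ is obtained by invoking Lemma~\ref{O} to realize $\gamma_A$ as a strong limit of $\gamma_{K_j} \in \mathcal{E}^+_{K_j} \subset \mathcal{E}^+_A$ combined with strong closedness, while $\gamma_A \in \widehat{\mathcal{E}}^+_A$ follows from (\ref{pr3}).

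For uniqueness under each of (a), (b), (c) within $\mathcal{E}^+_A$, I run the chain of implications (c) $\Rightarrow$ (b) $\Rightarrow$ (a) $\Rightarrow$ $\nu = \gamma_A$. The last implication is the key reduction: if $\nu \in \mathcal{E}^+_A$ satisfies (a), then
\[G(\nu) = 2\nu(X) - \|\nu\|^2 \geq \|\nu\|^2 \geq c_*(A),\]
and Theorem~\ref{th-ap} gives $G(\nu) \leq c_*(A)$, so $\nu$ must be the unique $G$-maximizer, namely $\gamma_A$. The step (b) $\Rightarrow$ (a) is immediate: the condition $\kappa\nu \leq 1$ $\nu$-a.e.\ yields $\|\nu\|^2 = \int \kappa\nu \, d\nu \leq \nu(X)$, while $\kappa\nu \geq 1$ n.e.\ on $A$ places $\nu$ in $\Gamma^+_A$, whence $\|\nu\|^2 \geq c_*(A)$ by Theorem~\ref{prop.1.2'}.

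The subtle implication is (c) $\Rightarrow$ (b), where I must promote the nearly-everywhere identity $\kappa\nu = 1$ on $A$ to the $\nu$-almost-everywhere upper bound $\kappa\nu \leq 1$. Set $F := \{x \in A : \kappa\nu(x) > 1\}$, so $c_*(F) = 0$; the key observation is that Chebyshev's inequality bounds $\nu(\{\kappa\nu > 1\}) \leq \int \kappa\nu \, d\nu = \|\nu\|^2 < \infty$, making $F$ both $\nu$-measurable and $\nu$-finite. Lemma~\ref{l-negl} then gives $\nu(F) = 0$, and concentration of $\nu$ on $A$ upgrades this to $\nu(\{\kappa\nu > 1\}) = 0$. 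For existence, $\gamma_A$ itself satisfies (a) by (\ref{pr1}), (b) by (\ref{pr0}) and (\ref{pr3}), and (c) by (\ref{F})---the last being where Frostman's principle enters via Theorem~\ref{prop.1.2'}. The main technical obstacle is precisely this $\sigma$-finiteness step in (c) $\Rightarrow$ (b); notably, Frostman's principle is required only for the existence half of (c), not for the uniqueness argument.
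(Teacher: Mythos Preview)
Your argument is correct, and for (\ref{cl1}), (\ref{cl2}), and the uniqueness under (a) and (b) it matches the paper's proof essentially line for line: both invoke Corollary~\ref{cor1qu} to trigger Theorem~\ref{th-ch}, and both reduce (b) to (a) and then (a) to the uniqueness of the $G$-maximizer via the chain $c_*(A)\geqslant G(\nu)\geqslant\|\nu\|^2\geqslant c_*(A)$.

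The genuine divergence is in the uniqueness half of (c). The paper argues as follows: for each compact $K\subset A$ one has $\kappa\tau|_K\leqslant\kappa\tau\leqslant1$ n.e.\ on $K$, hence on $S(\tau|_K)$ by lower semicontinuity, and then \emph{Frostman's principle} lifts this to $\kappa\tau|_K\leqslant1$ on all of $X$; passing to the vague limit $\tau|_K\to\tau$ and invoking the principle of descent gives $\kappa\tau\leqslant1$ everywhere. Your route instead isolates the exceptional set $F=A\cap\{\kappa\nu>1\}$, uses Markov's inequality to force $\nu$-$\sigma$-finiteness of $\{\kappa\nu>1\}$, and then kills $F$ via Lemma~\ref{l-negl} and the remainder $\{\kappa\nu>1\}\cap A^c$ via concentration. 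This is more elementary and, as you observe, shows that Frostman is needed only to produce a measure satisfying (c), not to pin it down uniquely --- a sharpening the paper's argument does not yield. One small caveat: your Markov bound $\nu(\{\kappa\nu>1\})\leqslant\int\kappa\nu\,d\nu$ presupposes $\kappa\nu\geqslant0$, hence $\kappa\geqslant0$; in the residual case of compact $X$ (where $\kappa$ may take negative values) the finiteness of $\nu(\{\kappa\nu>1\})$ is trivial since $\nu(X)<\infty$, so the argument still goes through, but you should say so explicitly.
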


\begin{proof}Let the hypotheses of the theorem be fulfilled. Also assume that $c_*(A)>0$, for if not, $\mathcal E^+_A=\widehat{\mathcal E}^+_A=\{0\}$  (Lemma~\ref{l-negl1}), and the theorem holds trivially with $\gamma_A=0$.
Taking Corollary~\ref{cor1qu} into account, we then conclude from Theorem~\ref{th-ch} that the inner capacitary measure $\gamma_A$ can alternatively be characterized as the unique maximizing measure in either of problems (\ref{G0}) and (\ref{G00}), which establishes (\ref{cl1}) and (\ref{cl2}).

Besides, it is obvious from (\ref{pr1}), (\ref{pr0}), and (\ref{pr3}) that $\gamma_A$ has properties (a) and (b). If now $\mu$ is another measure in $\mathcal E^+_A$ satisfying (a), then, in view of (\ref{G0}),
\[c_*(A)=\sup_{\nu\in\mathcal E^+_A}\,G(\nu)\geqslant G(\mu)=2\mu(X)-\|\mu\|^2\geqslant\|\mu\|^2\geqslant c_*(A).\]
Being therefore the maximizing measure in problem (\ref{G0}), $\mu$ necessarily equals $\gamma_A$.

Assume now that some $\chi\in\mathcal E^+_A$ meets (b). By the former relation in (b), we have $\chi\in\Gamma^+_A$, hence $\|\chi\|^2\geqslant c_*(A)$, whereas the latter relation in (b) yields
\[\|\chi\|^2=\int\kappa\chi\,d\chi\leqslant\chi(X).\] Thus (b) is actually reduced to (a), whence  $\chi=\gamma_A$ (cf.\ the preceding paragraph).

Assume finally that Frostman's maximum principle holds; then $\gamma_A$ satisfies (c) (Theorem~\ref{prop.1.2'}).
Given $\tau\in\mathcal E^+_A$ meeting (c), we need to prove that then necessarily $\tau=\gamma_A$, which would follow immediately if $\tau$ were shown to fulfill (b), or equivalently
\begin{equation}\label{tauu}\kappa\tau\leqslant1\text{ \ $\tau$-a.e.}\end{equation}
We may verify this only for positive $\kappa$, because the remaining case of compact $X$ can be reduced to the former case in the manner described in footnote~\ref{fIII}.

For any compact $K\subset A$, then $\kappa\tau|_K\leqslant\kappa\tau$ on $X$; therefore, $\kappa\tau|_K\leqslant1$ holds n.e.\ on $K$, hence everywhere on $S(\tau|_K)\subset K$, the potential of a positive measure being l.s.c.\ on $X$, and an application of Frostman's maximum principle gives
\[\kappa\tau|_K\leqslant1\text{ \ on $X$}.\]
Since $\tau|_K\to\tau|_A=\tau$ vaguely as $K\uparrow A$ (cf.\ Lemma~\ref{l-lower} with $\psi:=f\in C^+_0(X)$),\footnote{As usual, $C_0^+(X)$ denotes the subclass of $C_0(X)$ consisting of all $f\geqslant0$.}
\[\kappa\tau\leqslant\liminf_{K\uparrow A}\,\kappa\tau|_K\leqslant1\text{ \ on $X$}\]
(the principle of descent), whence (\ref{tauu}).\end{proof}

\begin{remark}\label{closedness}
  The requirement of the strong closedness of the class $\mathcal E^+_A$ is essential for the validity of Theorem~\ref{cor3}. Indeed, if $\mathcal E^+_A$ is strongly unclosed, it may happen that $\gamma_A\not\in\mathcal E^+_A$ (see  Remark~\ref{rem4}), and then both (\ref{cl1}) and (\ref{cl2}) do fail to hold. Furthermore, even if Frostman's maximum principle is fulfilled, there may exist no $\nu\in\mathcal E^+_A$ with $\kappa\nu=1$ n.e.\ on $A$ (see footnote~\ref{Foot}). In view of these facts, Theorem~\ref{G0'ar}, generalizing Theorem~\ref{cor3} to {\it arbitrary\/} $A$, is of particular interest to the present study.
\end{remark}

\section{Convergence of inner capacitary measures and their potentials}\label{sec-conv}

As in Sects.~\ref{sec-2} and \ref{sec-ch}, in this section a kernel $\kappa$ is assumed to be {\it perfect}.

\begin{theorem}\label{cor2}For arbitrary\/ $A\subset X$ with\/ $c_*(A)<\infty$,
\begin{equation}\label{eq4}\gamma_K\to\gamma_A\text{ \ strongly and vaguely in\/ $\mathcal E^+$ as\/ $K\uparrow A$.}\end{equation}
If moreover the first and the second maximum principles both hold,
then also
\begin{equation*}\label{eq5}\kappa\gamma_K\uparrow\kappa\gamma_A\text{ \ pointwise on\/ $X$ as\/ $K\uparrow A$,}\end{equation*}
and hence
\begin{equation}\label{eq6}\kappa\gamma_A=\sup_{K\in\mathfrak C_A}\,\kappa\gamma_K\text{ \ on\/ $X$.}\end{equation}
\end{theorem}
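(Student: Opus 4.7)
\textbf{Proof plan for Theorem~\ref{cor2}.} The trivial case $c_*(A)=0$ is disposed of immediately: then $\mathcal E^+_A=\{0\}$ (Lemma~\ref{l-negl1}), so $\gamma_A=0$, every $\gamma_K=0$, and there is nothing to prove. So assume $c_*(A)\in(0,\infty)$, and choose $K_0\in\mathfrak C_A$ with $c(K_0)>0$; for every $K\in\mathfrak C_A$ with $K\geqslant K_0$, the capacitary measure $\gamma_K$ exists, $\|\gamma_K\|^2=\gamma_K(X)=c(K)$, and by (\ref{153}) the monotone net $c(K)$ increases to $c_*(A)$ as $K\uparrow A$.

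For (\ref{eq4}), the plan is to expand the square of the strong distance and exploit the defining inequality (\ref{pr0}) for $\gamma_A$. Concretely,
\[\|\gamma_A-\gamma_K\|^2=\|\gamma_A\|^2-2\int\kappa\gamma_A\,d\gamma_K+\|\gamma_K\|^2=c_*(A)-2\int\kappa\gamma_A\,d\gamma_K+c(K).\]
Since $\kappa\gamma_A\geqslant1$ n.e.\ on $A$ and $K\subset A$, the exceptional set on $K$ is universally measurable of inner capacity zero, hence $\gamma_K$-negligible by Lemma~\ref{l-negl} (as $\gamma_K$ is bounded and of finite energy). Therefore $\int\kappa\gamma_A\,d\gamma_K\geqslant\gamma_K(X)=c(K)$, which gives
\[\|\gamma_A-\gamma_K\|^2\leqslant c_*(A)-c(K)\to0\text{ \ as \ }K\uparrow A.\]
This yields strong convergence; vague convergence follows automatically, because the kernel is perfect and so every strong Cauchy net converges to the same limit both strongly and vaguely (cf.\ Sect.~\ref{subs-pr}).

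For the statement under the additional maximum principles, I would first establish that the net $(\kappa\gamma_K)_{K\in\mathfrak C_A}$ is monotone increasing on $X$ and bounded above by $\kappa\gamma_A$. For $K_1\subset K_2$ in $\mathfrak C_A$ (both of positive capacity), Frostman's principle gives $\kappa\gamma_{K_j}=1$ n.e.\ on $K_j$ (footnote~\ref{fr}), so $\kappa\gamma_{K_1}=\kappa\gamma_{K_2}=1$ n.e.\ on $K_1$, hence $\gamma_{K_1}$-a.e.\ on $X$; the domination principle then yields $\kappa\gamma_{K_1}\leqslant\kappa\gamma_{K_2}$ on all of $X$. The same argument with $K$ in place of $K_1$ and $\gamma_A$ in place of $\gamma_{K_2}$ (using $\kappa\gamma_A\geqslant1$ n.e.\ on $K$ by (\ref{pr0})) gives $\kappa\gamma_K\leqslant\kappa\gamma_A$ on $X$. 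Consequently $g:=\sup_{K\in\mathfrak C_A}\kappa\gamma_K\leqslant\kappa\gamma_A$ on $X$.

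For the reverse inequality, I invoke lower semicontinuity of the potential in the measure variable (Lemma~\ref{lemma-semi} applied to $\psi:=\kappa(x,\cdot)$ for each fixed $x$): since $\gamma_K\to\gamma_A$ vaguely by the first part,
\[\kappa\gamma_A(x)\leqslant\liminf_{K\uparrow A}\,\kappa\gamma_K(x)=\sup_{K\in\mathfrak C_A}\,\kappa\gamma_K(x)=g(x),\]
the middle equality holding because a monotone increasing net coincides with its pointwise supremum. Combining the two bounds gives $\kappa\gamma_K\uparrow\kappa\gamma_A$ pointwise on $X$ and hence (\ref{eq6}). The only step demanding care is the monotonicity of $\kappa\gamma_K$ along the net, which genuinely uses both maximum principles (Frostman to obtain equality n.e.\ on the compact subsets, domination to propagate the comparison from $\gamma_{K_1}$-a.e.\ to everywhere); the strong-convergence part, by contrast, is entirely an exercise in the pre-Hilbert structure of~$\mathcal E$ together with the n.e.\ inequality (\ref{pr0}) for $\gamma_A$.
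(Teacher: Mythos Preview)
Your proof is correct. For the pointwise monotone convergence of potentials under the two maximum principles, your argument is essentially identical to the paper's: Frostman to get $\kappa\gamma_K=1$ n.e.\ on $K$, Lemma~\ref{l-negl} to pass to $\gamma_K$-a.e., domination to propagate the inequality to all of $X$, and the principle of descent for the reverse bound.

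For the strong convergence, however, you take a genuinely different and more direct route. The paper argues via Lemma~\ref{l-extr}: it observes that the normalized measures $\lambda_K=\gamma_K/c(K)$ form a minimizing net in problem~(\ref{W}), and then invokes the machinery of minimizing nets (parallelogram identity plus the identification of the extremal measure $\xi_A$ with $\gamma_A/c_*(A)$, itself relying on Lemma~\ref{O}) to obtain convergence. You bypass all of this by expanding $\|\gamma_A-\gamma_K\|^2$ directly and using only (\ref{pr0}), (\ref{pr1}), (\ref{eq-pr-c0}), and Lemma~\ref{l-negl} to bound the cross term $\int\kappa\gamma_A\,d\gamma_K\geqslant c(K)$. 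This yields the sharp estimate $\|\gamma_A-\gamma_K\|^2\leqslant c_*(A)-c(K)$ in one line; in the paper that estimate only appears afterwards as Corollary~\ref{prop.1.11}, deduced from \cite[Lemma~4.1.1]{F1}. Your approach is more elementary and self-contained, while the paper's route has the side benefit of simultaneously establishing the convergence of \emph{every} minimizing net to the extremal measure, a statement of independent interest used elsewhere (e.g.\ in Theorem~\ref{cor1}).
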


\begin{corollary}\label{prop.1.11} For any\/ $A,H\subset X$ such that\/ $H\subset A$ and\/ $c_*(A)<\infty$,
\begin{equation}\label{eq.1.11}\|\gamma_A-\gamma_H\|^2\leqslant\|\gamma_A\|^2-\|\gamma_H\|^2.\end{equation}
If moreover Frostman's maximum principle is fulfilled, equality prevails here:
\begin{equation}\label{eq.1.111}\|\gamma_A-\gamma_H\|^2=\|\gamma_A\|^2-\|\gamma_H\|^2.\end{equation}
\end{corollary}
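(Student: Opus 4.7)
The plan is to rewrite both claims as statements about the inner product $\langle\gamma_A,\gamma_H\rangle$, and then exploit the variational characterization of $\gamma_H$ (and, for equality, Frostman's maximum principle applied to $\gamma_A$).

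First, expanding gives
\[\|\gamma_A-\gamma_H\|^2=\|\gamma_A\|^2-2\langle\gamma_A,\gamma_H\rangle+\|\gamma_H\|^2,\]
so (\ref{eq.1.11}) is equivalent to $\langle\gamma_A,\gamma_H\rangle\geqslant\|\gamma_H\|^2$, and (\ref{eq.1.111}) to equality in the same relation.

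Second, since $H\subset A$, Lemma~\ref{str} shows that an exceptional set within $H$ inherits inner capacity $0$ from one within $A$; hence $\Gamma_A^+\subset\Gamma_H^+$, and in particular $\gamma_A\in\Gamma_H^+\subset\Gamma_H$. The class $\Gamma_H$ is convex (again by Lemma~\ref{str} applied to the two exceptional sets), and by Theorem~\ref{prop.1.2'}, $\gamma_H$ minimizes $\|\cdot\|^2$ over $\Gamma_H$. Considering the line segment $(1-t)\gamma_H+t\gamma_A\in\Gamma_H$ for $t\in[0,1]$ and computing the one-sided derivative of $\|(1-t)\gamma_H+t\gamma_A\|^2$ at $t=0$ (this is legitimate since $\mathcal E$ is pre-Hilbert), the minimality forces $\langle\gamma_A-\gamma_H,\gamma_H\rangle\geqslant0$, that is,
\[\langle\gamma_A,\gamma_H\rangle\geqslant\|\gamma_H\|^2,\]
which is (\ref{eq.1.11}).

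For the equality case, assume Frostman's maximum principle. By (\ref{pr2}), $\kappa\gamma_A\leqslant1$ on $S(\gamma_A)$, and Frostman's principle extends this to $\kappa\gamma_A\leqslant1$ on all of $X$. Hence, using (\ref{pr1}),
\[\langle\gamma_A,\gamma_H\rangle=\int\kappa\gamma_A\,d\gamma_H\leqslant\gamma_H(X)=\|\gamma_H\|^2.\]
Combining with the reverse inequality from the previous step yields $\langle\gamma_A,\gamma_H\rangle=\|\gamma_H\|^2$, which plugged into the expansion above gives (\ref{eq.1.111}).

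I do not foresee a serious obstacle: the argument is a clean application of the convex-minimization first-variation inequality in the pre-Hilbert space $\mathcal E$, combined with the global upper bound $\kappa\gamma_A\leqslant1$ provided by Frostman's maximum principle. The only point requiring care is verifying that $\gamma_A\in\Gamma_H$ (so that the variational inequality applies), which is handled by the subadditivity of inner capacity (Lemma~\ref{str}); all other manipulations are purely algebraic in the inner product.
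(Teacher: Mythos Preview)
Your proof is correct. For the inequality~(\ref{eq.1.11}) you take essentially the same route as the paper: the paper invokes \cite[Lemma~4.1.1]{F1}, which is precisely the first-variation inequality for the minimizer of a convex energy problem that you spell out by hand via the segment $t\mapsto(1-t)\gamma_H+t\gamma_A$.

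For the equality~(\ref{eq.1.111}) your argument is genuinely different and in fact more elementary. You use Frostman's maximum principle only to obtain the global bound $\kappa\gamma_A\leqslant1$ on $X$, then integrate directly against $\gamma_H$ and invoke $\gamma_H(X)=\|\gamma_H\|^2$ from~(\ref{pr1}); combining with the first-part inequality gives the result. The paper instead uses the stronger consequence~(\ref{F}) of Frostman (namely $\kappa\gamma_A=1$ n.e.\ on $A$), and because $\gamma_H$ need not be concentrated on $H$, it cannot integrate directly against $\gamma_H$; it therefore approximates $\gamma_H$ by $\gamma_K$ for compact $K\subset H$ (where $\kappa\gamma_A=1$ does hold $\gamma_K$-a.e.), and passes to the limit via the strong convergence $\gamma_K\to\gamma_H$ from Theorem~\ref{cor2}. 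Your version is self-contained and does not rely on the preceding convergence theorem, which is an advantage; the paper's version, on the other hand, computes $\langle\gamma_A,\gamma_H\rangle=\|\gamma_H\|^2$ outright without needing to combine two inequalities.
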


\begin{proof}Since $\gamma_A\in\Gamma_H^+$, while $\gamma_H$ minimizes $\|\nu\|^2$ over $\nu\in\Gamma_H^+$, we derive (\ref{eq.1.11}) from \cite[Lemma~4.1.1]{F1}.
Let now Frostman's maximum principle be satisfied. Noting that then $\kappa\gamma_A=1$ holds n.e.\ on each $K\in\mathfrak C_H$, hence $\gamma_K$-a.e.\ (Lemma~\ref{l-negl}), whereas
\begin{equation}\label{eq.h}
\gamma_K\to\gamma_H\text{ \ strongly in $\mathcal E^+$ as $K\uparrow H$},\end{equation}
we obtain
\begin{align*}\kappa(\gamma_A,\gamma_H)=\lim_{K\uparrow H}\,\kappa(\gamma_A,\gamma_K)=\lim_{K\uparrow H}\,\int\kappa\gamma_A\,d\gamma_K=\lim_{K\uparrow H}\,\gamma_K(X)=\lim_{K\uparrow H}\,\|\gamma_K\|^2=\|\gamma_H\|^2,\end{align*}
whence (\ref{eq.1.111}). (The forth equality here is valid by (\ref{pr1}) with $A:=K$, while the first follows from (\ref{eq.h}) with the aid of the Cauchy--Schwarz inequality.)\end{proof}

\begin{corollary}[{\rm Monotonicity property}]\label{eq-mon} If\/ $c_*(A)<\infty$, and if the first and the second maximum principles both hold, then for any\/ $H\subset A$,
\begin{equation}\label{eq7}\kappa\gamma_H\leqslant\kappa\gamma_A\text{ \ on\/ $X$}.\end{equation}
\end{corollary}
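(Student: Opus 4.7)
\begin{pf}[Proof proposal]
The plan is to deduce the monotonicity property directly from equation~(\ref{eq6}) in Theorem~\ref{cor2}, applied to both $A$ and $H$. First I would observe that inner capacity is monotone, so $c_*(H) \leq c_*(A) < \infty$; hence the inner capacitary measure $\gamma_H$ exists by Theorem~\ref{prop.1.2'}, and both hypotheses of Theorem~\ref{cor2} are satisfied for $H$ as well as for $A$.

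Next, since $H \subset A$, every compact subset of $H$ is also a compact subset of $A$, i.e.\ $\mathfrak{C}_H \subset \mathfrak{C}_A$. Applying (\ref{eq6}) to the set $H$ gives
\[
\kappa\gamma_H = \sup_{K \in \mathfrak{C}_H} \kappa\gamma_K \text{ \ on\/ } X,
\]
and applying (\ref{eq6}) to $A$ gives the corresponding identity for $\kappa\gamma_A$. The inclusion $\mathfrak{C}_H \subset \mathfrak{C}_A$ then yields
\[
\kappa\gamma_H = \sup_{K \in \mathfrak{C}_H} \kappa\gamma_K \leqslant \sup_{K \in \mathfrak{C}_A} \kappa\gamma_K = \kappa\gamma_A \text{ \ on\/ } X,
\]
which is exactly (\ref{eq7}).

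There is no real obstacle here; the work has already been done in establishing Theorem~\ref{cor2}, and the corollary is essentially bookkeeping that compares the two directed families $\mathfrak{C}_H$ and $\mathfrak{C}_A$. The only subtle point to verify is that (\ref{eq6}) is indeed legitimately applicable to $H$, which is where one uses the monotonicity $c_*(H) \leqslant c_*(A) < \infty$ so that $\gamma_H$ is the well-defined inner capacitary measure whose potential admits the pointwise supremum representation.
\end{pf}
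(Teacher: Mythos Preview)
Your proposal is correct and follows essentially the same approach as the paper: the paper's proof consists of the single observation that $\mathfrak{C}_H\subset\mathfrak{C}_A$ and an appeal to~(\ref{eq6}). You have simply made explicit the routine verifications (that $c_*(H)\leqslant c_*(A)<\infty$, so that $\gamma_H$ exists and Theorem~\ref{cor2} applies to $H$) which the paper leaves implicit.
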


\begin{proof}Since obviously $\mathfrak C_H\subset\mathfrak C_A$, (\ref{eq7}) is implied by (\ref{eq6}).\end{proof}

\begin{theorem}\label{th-cont2}Given arbitrary\/ $A\subset X$, assume there exists an increasing sequence\/ $(U_j)$ of universally measurable sets\/ $U_j\subset X$ such that\/\footnote{Such $(U_j)$ necessarily exists if the space $X$ is second-countable or, more generally, $\sigma$-com\-pact.}
\[A=\bigcup_{j\in\mathbb N}\,A_j,\text{ \ where\/ $A_j:=A\cap U_j$.}\]
Then
\begin{equation}\label{eq-cont}c_*(A)=\lim_{j\to\infty}\,c_*(A_j).\end{equation}
If now\/ $c_*(A)<\infty$, then also
\begin{align}&\gamma_{A_j}\to\gamma_A\text{ \ strongly and vaguely},\label{eq-cont1}\\
&\kappa\gamma_A=\liminf_{j\to\infty}\,\kappa\gamma_{A_j}\text{ \ n.e.\ on\/ $X$.}\label{eq-cont2}
\end{align}
If moreover the first and the second maximum principles both hold,
then\/ {\rm(\ref{eq-cont2})} can be refined as follows:
\begin{equation}\label{eq-cont3}\kappa\gamma_{A_j}\uparrow\kappa\gamma_A\text{ \ on\/ $X$.}\end{equation}
\end{theorem}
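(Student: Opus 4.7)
The plan is to prove the four assertions in the order stated, using Corollary~\ref{prop.1.11} to transfer convergence of capacities into strong convergence of capacitary measures, and then unpacking the consequences via the principle of descent and (for the last item) the monotonicity granted by both maximum principles.

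First, for (\ref{eq-cont}): monotonicity of $c_*$ gives $L:=\lim_j c_*(A_j)\leqslant c_*(A)$, and I need to reverse this. By (\ref{153}) it suffices to show $c(K)\leqslant L$ for every $K\in\mathfrak C_A$. Since $K=\bigcup_j (K\cap U_j)$ is an increasing union of universally measurable sets, Fuglede's continuity of inner capacity along such sequences (see \cite[Lemma~2.3.3 and the Remark on p.~158]{F1}) yields $c(K)=\lim_j c_*(K\cap U_j)\leqslant\lim_j c_*(A_j)=L$, and taking the supremum over $K\in\mathfrak C_A$ closes the argument.

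Second, for (\ref{eq-cont1}), assuming $c_*(A)<\infty$: applying Corollary~\ref{prop.1.11} with $H:=A_j\subset A$ gives
\[\|\gamma_A-\gamma_{A_j}\|^2\leqslant\|\gamma_A\|^2-\|\gamma_{A_j}\|^2=c_*(A)-c_*(A_j),\]
whose right-hand side tends to $0$ by the first step. Hence $\gamma_{A_j}\to\gamma_A$ strongly in $\mathcal E^+$, and perfectness of $\kappa$ upgrades this to vague convergence as well. For (\ref{eq-cont2}), the principle of descent (Lemma~\ref{lemma-semi}) applied to the vague convergence $\gamma_{A_j}\to\gamma_A$ at once delivers $\kappa\gamma_A\leqslant\liminf_j\kappa\gamma_{A_j}$ everywhere on $X$. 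For the reverse n.e.\ inequality, I would invoke the Fuglede-type result that any strong Cauchy sequence in $\mathcal E^+$ admits a subsequence whose potentials converge n.e.\ to the potential of the strong limit (cf.\ the arguments in \cite[Section~3.2]{F1}); applied along a subsequence $\gamma_{A_{j_k}}$ one obtains $\kappa\gamma_{A_{j_k}}\to\kappa\gamma_A$ n.e.\ on $X$, whence $\liminf_j\kappa\gamma_{A_j}\leqslant\liminf_k\kappa\gamma_{A_{j_k}}=\kappa\gamma_A$ n.e.

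Finally, for (\ref{eq-cont3}) under both maximum principles: Corollary~\ref{eq-mon} furnishes the monotonicity $\kappa\gamma_{A_j}\leqslant\kappa\gamma_{A_{j+1}}\leqslant\kappa\gamma_A$ on all of $X$, so the pointwise limit $\phi:=\lim_j\kappa\gamma_{A_j}$ satisfies $\phi\leqslant\kappa\gamma_A$ everywhere. Combining this with the principle of descent, which gives $\kappa\gamma_A\leqslant\liminf_j\kappa\gamma_{A_j}=\phi$, forces equality on all of $X$. The main obstacle is therefore step~1: the continuity of inner capacity along the sequence $(A_j)$, which hinges on Fuglede's continuity result for increasing universally measurable sequences. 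Once (\ref{eq-cont}) is secured, the remaining conclusions follow from a brief pre-Hilbert argument, the principle of descent, and—for the pointwise monotone convergence in (\ref{eq-cont3})—the monotonicity of potentials granted by the two maximum principles.
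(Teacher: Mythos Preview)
Your argument is correct, but it takes a genuinely different route from the paper's own proof for the first two assertions. The paper proceeds by first invoking Theorem~\ref{th-count} to replace each $A_j$ (and $A$) by an auxiliary $K_\sigma$-set $\widehat{A}_j\subset A_j$ (resp.\ $\widehat{A}\subset A$) with the same inner capacity and the same inner capacitary measure; the $\widehat{A}_j$ being universally measurable, (\ref{eq-cont}) and (\ref{eq-cont1}) then follow directly from Fuglede's results \cite[Lemma~2.3.3, Theorem~4.2]{F1}. Your approach bypasses this reduction entirely: for (\ref{eq-cont}) you work at the level of compact $K\subset A$ and apply \cite[Lemma~2.3.3]{F1} to the universally measurable sequence $(K\cap U_j)$, and for (\ref{eq-cont1}) you read off strong convergence from the pre-Hilbert estimate of Corollary~\ref{prop.1.11}. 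Your route is more self-contained and arguably more elementary (it does not rely on \cite[Theorem~4.2]{F1}, nor on the $K_\sigma$-reduction machinery), whereas the paper's route highlights the reduction-to-Borel technique that is thematically central to the article and reused elsewhere. For (\ref{eq-cont2}) and (\ref{eq-cont3}) the two arguments coincide in substance; the paper cites \cite[Lemma~3.2.4]{F1} for the n.e.\ upper bound, which is exactly the result underlying your subsequence extraction.
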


\begin{remark}Limit relations (\ref{eq-cont}) and (\ref{eq-cont1}) generalize Fuglede's results \cite{F1} (Lemma~2.3.3 and Theorem~4.2), given in the case where $A$ is the union of an increasing sequence of universally measurable subsets (and hence $A$ is likewise universally measurable).\footnote{See also \cite[Section~II.2.9, Remark]{L} pertaining to the Riesz kernels on $\mathbb R^n$.} This generalization (see Sect.~\ref{proof-th-cont2} for a proof) is substantially based on Theorem~\ref{th-count}, which makes it possible to reduce the analysis to that for Borel sets.
\end{remark}

\begin{theorem}\label{th-cont-bor2}Let\/ $A$ be the intersection of a decreasing net\/ {\rm(}resp.\ a decreasing sequence\/{\rm)} $(A_t)_{t\in T}$ of closed\/ {\rm(}resp.\ quasiclosed\/{\rm)} sets\/ $A_t\subset X$ such that
\begin{equation}\label{111}c_*(A_{t_0})<\infty\text{ \ for some\/ $t_0\in T$}.\end{equation}
Then\/\footnote{If $A$ is the intersection of a decreasing net of closed $A_t$, (\ref{eq-cl}) was declared in \cite[Lemma~4.2.1]{F1}; however, its proof was incomplete, being based on \cite[Section~2.5, Eq.~(1)]{F1} (cf.\ Remark~\ref{Fuu}).\label{f-decr}}
\begin{equation}\gamma_{A_t}\to\gamma_A\text{ \ strongly and vaguely},\label{eq-cl}\end{equation}
and hence
\begin{equation}\lim_{t\in T}\,c_*(A_t)=c_*(A).\label{eq-cl-cap}\end{equation}
If moreover the first and the second maximum principles both hold,
then also
\begin{equation}\label{eq-cl-pot}\kappa\gamma_{A_t}\downarrow\kappa\gamma_A\text{ \ pointwise n.e.\ on\/ $X$}.\end{equation}
\end{theorem}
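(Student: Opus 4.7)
The strategy is to combine the Cauchy estimate of Corollary~\ref{prop.1.11} with the characterizations of $\gamma_A$ from Theorem~\ref{cor3}, using the finite-capacity hypothesis to produce a strong limit of $(\gamma_{A_t})$ and then identify it with $\gamma_A$.

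First, the net $(c_*(A_t))_{t\geqslant t_0}$ is nonincreasing and bounded below by $c_*(A)\geqslant 0$, hence converges to some $c\in[c_*(A),c_*(A_{t_0})]$. For $s\geqslant t\geqslant t_0$ one has $A_s\subset A_t$, and Corollary~\ref{prop.1.11} applied with $A_t,A_s$ in the roles of $A,H$ yields
\[
\|\gamma_{A_t}-\gamma_{A_s}\|^2\leqslant c_*(A_t)-c_*(A_s),
\]
so $(\gamma_{A_t})_{t\geqslant t_0}$ is strong Cauchy in $\mathcal E^+$. Perfectness of $\kappa$ supplies a unique strong-and-vague limit $\gamma_0\in\mathcal E^+$ with $\|\gamma_0\|^2=c$. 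Next I locate $\gamma_0\in\mathcal E^+_A$: in the closed-net case, the vague lower semicontinuity of $\mu\mapsto\mu(U)$ (Lemma~\ref{lemma-semi}) applied to open $U$ disjoint from some $A_t$ forces $\gamma_0(U)=0$, whence $S(\gamma_0)\subset\bigcap_tA_t=A$; in the quasiclosed-sequence case, each $\mathcal E^+_{A_j}$ is strongly closed (Lemma~\ref{l-quasi}), so $\gamma_0\in\bigcap_j\mathcal E^+_{A_j}=\mathcal E^+_A$ (the last equality by countable additivity of local $\gamma_0$-negligibility).

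To identify $\gamma_0=\gamma_A$ I invoke Theorem~\ref{cor3}(b), which characterizes $\gamma_A$ as the unique $\mu\in\mathcal E^+_A$ with $\kappa\mu\geqslant 1$ n.e.\ on $A$ and $\kappa\mu\leqslant 1$ $\mu$-a.e. For the first property, I test against the capacitary distributions $\lambda_K\in\breve{\mathcal E}^+_K$ for $K\in\mathfrak C_A$: since $\gamma_{A_t}\in\Gamma^+_{A_t}$ and $\lambda_K$ is concentrated on $K\subset A\subset A_t$, Lemma~\ref{l-negl} gives $\kappa\gamma_{A_t}\geqslant 1$ $\lambda_K$-a.e., so $\langle\gamma_{A_t},\lambda_K\rangle\geqslant\lambda_K(X)=1$; strong convergence then yields $\int\kappa\gamma_0\,d\lambda_K\geqslant 1$ for every such $K$, whence a contradiction argument via Lemma~\ref{str} applied to $\bigcup_n\{\kappa\gamma_0<1-1/n\}\cap A$ forces $\kappa\gamma_0\geqslant 1$ n.e.\ on $A$. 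Consequently $\gamma_0\in\Gamma^+_A\cap\mathcal E^+_A$ with $\|\gamma_0\|^2\geqslant c_*(A)$, giving $c\geqslant c_*(A)$.

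The main obstacle is the reverse inequality $c\leqslant c_*(A)$, equivalently, the upgrade of the vague l.s.c.\ bound $\gamma_0(X)\leqslant c$ to equality (which, combined with $\kappa\gamma_0\geqslant 1$ $\gamma_0$-a.e., then yields $\kappa\gamma_0=1$ $\gamma_0$-a.e., closing Theorem~\ref{cor3}(b)). I expect this upgrade to proceed by extracting, in view of $c_*(A_{t_0})<\infty$ and Theorem~\ref{capf} bounding total masses along with a compact exhaustion of $A_{t_0}$, a cofinal subsequence along which $\kappa\gamma_{A_{t_k}}\to\kappa\gamma_0$ n.e.\ on $X$ (a consequence of strong convergence in $\mathcal E^+$ combined with Lemma~\ref{str}), thereby transferring the $\gamma_{A_{t_k}}$-a.e.\ identity $\kappa\gamma_{A_{t_k}}=1$ from~\eqref{pr3} to $\gamma_0$-a.e.\ in the limit. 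Once $\gamma_0=\gamma_A$ is established,~\eqref{eq-cl} and~\eqref{eq-cl-cap} follow at once. For~\eqref{eq-cl-pot}, the first and second maximum principles yield via Corollary~\ref{eq-mon} the monotonicity chain $\kappa\gamma_A\leqslant\kappa\gamma_{A_s}\leqslant\kappa\gamma_{A_t}$ on $X$ whenever $s\geqslant t$, so $(\kappa\gamma_{A_t})$ is decreasing with pointwise limit $g\geqslant\kappa\gamma_A$; then $\int g\,d\mu=\lim\langle\gamma_{A_t},\mu\rangle=\langle\gamma_A,\mu\rangle=\int\kappa\gamma_A\,d\mu$ for every $\mu\in\mathcal E^+$, which by the uniqueness of potentials up to sets of inner capacity zero forces $g=\kappa\gamma_A$ n.e.\ on $X$.
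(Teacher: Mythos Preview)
Your setup is fine and matches the paper: Corollary~\ref{prop.1.11} gives the strong Cauchy property, perfectness yields a strong-and-vague limit $\gamma_0$ with $\|\gamma_0\|^2=c:=\lim c_*(A_t)\geqslant c_*(A)$, and your two arguments for $\gamma_0\in\mathcal E^+_A$ are essentially those of the paper. Your proof of $\kappa\gamma_0\geqslant1$ n.e.\ on $A$ via testing against $\lambda_K$ is also correct (though, as you will see, unnecessary). Your treatment of \eqref{eq-cl-pot} is a legitimate alternative to the paper's use of \cite[Lemma~3.2.4]{F1}.

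The genuine gap is the identification step, specifically the inequality $\kappa\gamma_0\leqslant1$ $\gamma_0$-a.e.\ (equivalently $\gamma_0(X)\geqslant c$). Your proposed route---extract a subsequence with $\kappa\gamma_{A_{t_k}}\to\kappa\gamma_0$ n.e.\ and ``transfer the $\gamma_{A_{t_k}}$-a.e.\ identity $\kappa\gamma_{A_{t_k}}=1$ to $\gamma_0$-a.e.''---does not work: convergence n.e.\ on $X$ says nothing about what happens on the support of $\gamma_0$, because the exceptional set (of inner capacity zero) may well carry positive $\gamma_0$-mass is impossible, but more to the point the identities $\kappa\gamma_{A_{t_k}}=1$ hold only $\gamma_{A_{t_k}}$-a.e., i.e.\ on the (moving) supports $S(\gamma_{A_{t_k}})$, and there is no mechanism to pass a statement that holds $\gamma_{A_{t_k}}$-a.e.\ for each $k$ to one that holds $\gamma_0$-a.e. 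Neither Theorem~\ref{capf} nor a compact exhaustion of $A_{t_0}$ produces the tightness you would need to upgrade $\gamma_0(X)\leqslant c$ to equality.

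The paper closes this gap by a different and much shorter device that you overlooked: property \eqref{pr2}, valid for any perfect kernel, gives $\kappa\gamma_{A_t}\leqslant1$ on $S(\gamma_{A_t})$. Since $(x,\mu)\mapsto\kappa\mu(x)$ is l.s.c.\ on $X\times\mathfrak M^+$ (the principle of descent), vague convergence $\gamma_{A_t}\to\gamma_0$ yields $\kappa\gamma_0\leqslant1$ on $S(\gamma_0)$: for $x\in S(\gamma_0)$ one finds $x_t\in S(\gamma_{A_t})$ with $x_t\to x$ along a subnet, whence $\kappa\gamma_0(x)\leqslant\liminf\kappa\gamma_{A_t}(x_t)\leqslant1$. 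This gives $\|\gamma_0\|^2\leqslant\gamma_0(X)$, hence $G(\gamma_0)\geqslant\|\gamma_0\|^2\geqslant c_*(A)$; combined with $\gamma_0\in\mathcal E^+_A$ and \eqref{G0} one gets $G(\gamma_0)\leqslant c_*(A)$, so equality holds throughout and $\gamma_0=\gamma_A$ by the uniqueness in Theorem~\ref{cor3} (equivalently, $\gamma_0$ satisfies condition (a) there). Note that this bypasses entirely the need to prove $\kappa\gamma_0\geqslant1$ n.e.\ on $A$ as a separate step.
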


\begin{remark}Assumption (\ref{111}) is essential for the validity of Theorem~\ref{th-cont-bor2}, and this is the case even for the Newtonian kernel $|x-y|^{2-n}$ on $\mathbb R^n$, $n\geqslant3$. For instance, consider $A_j:=K\cup\{|x|\geqslant j\}$, $j\in\mathbb N$, a set $K\subset\mathbb R^n$ being compact. Then the sequence $(A_j)$ of the closed sets $A_j$ decreases, and its intersection equals $K$. However, $c(K)<\infty$, whereas $c(A_j)=\infty$ for all $j$, and so (\ref{eq-cl-cap}) indeed fails to hold.\end{remark}

\subsection{Proof of Theorem~\ref{cor2}} Lemma~\ref{l-extr} is crucial to the present proof.

Assume $c_*(A)>0$, for if not, the theorem holds trivially with $\gamma_K=\gamma_A=0$. Then for all $K\in\mathfrak C_A$ sufficiently large ($K\geqslant K_0$), we have $c(K)>0$, and hence there exists the (unique) $\lambda_K\in\breve{\mathcal E}^+_K\subset\breve{\mathcal E}^+_A$ with
\[\|\lambda_K\|^2=w(K)=1/c(K).\]
It is clear from (\ref{153}) that these $\lambda_K$ form a minimizing net,
which according to Lemma~\ref{l-extr} must converge to the extremal measure $\xi_A$ strongly and vaguely. Combined with (\ref{153}), (\ref{G}), and (\ref{extrcap}), this
establishes (\ref{eq4}).

Assume now additionally that the first and the second maximum principles are both fulfilled. For any $K,K'\in\mathfrak C_A$ such that $K_0\leqslant K\leqslant K'$, then
\[\kappa\gamma_K=\kappa\gamma_{K'}=\kappa\gamma_A=1\text{ \ n.e.\ on $K$},\]
which is derived from (\ref{F}) by use of the countable subadditivity of inner capacity on universally measurable sets. Since each of these equalities holds $\gamma_K$-a.e.\ on $X$ (Lemma~\ref{l-negl}), the second maximum principle gives
\[\kappa\gamma_K\leqslant\kappa\gamma_{K'}\leqslant\kappa\gamma_A\text{ \ on $X$}.\]
Thus $\lim_{K\uparrow A}\,\kappa\gamma_K$ exists everywhere on $X$, and moreover it does not exceed $\kappa\gamma_A$. To complete the proof, it remains to show that
\[\kappa\gamma_A\leqslant\lim_{K\uparrow A}\,\kappa\gamma_K\text{ \ on $X$},\]
which however follows immediately from the vague convergence of the net $(\gamma_K)_{K\in\mathfrak C_A}$ to $\gamma_A$ by the principle of descent.

\subsection{Proof of Theorem~\ref{th-cont2}}\label{proof-th-cont2}
Applying Theorem~\ref{th-count} we see that there exist $K_\sigma$-sets $A_j'\subset A_j$, $j\in\mathbb N$, and $A'\subset A$ such that $A_j'\subset A_{j+1}'$ and
\[c_*(A_j')=c_*(A_j),\quad c_*(A')=c_*(A).\]
Then
\[\widehat{A}_j:=A_j'\cup(A'\cap U_j)\subset A_j,\ j\in\mathbb N,\]
form an increasing sequence of $K_\sigma$-sets, whose union
\[\widehat{A}:=\bigcup_{j\in\mathbb N}\,\widehat{A}_j\supset A'\] is a $K_\sigma$-subset of $A$. The $K_\sigma$-sets being universally measurable,
\begin{equation*}\label{eq-cont'}c_*(A)=c_*(\widehat{A})=\lim_{j\to\infty}\,c_*(\widehat{A}_j)=
\lim_{j\to\infty}\,c_*(A_j),\end{equation*}
whence (\ref{eq-cont}). The second equality here holds by \cite[Lemma~2.3.3]{F1}, whereas the first (resp.\ the third) holds by (\ref{eq2}) with $H:=\widehat{A}$ (resp.\ with $A:=A_j$ and $H:=\widehat{A}_j$).

If now $c_*(A)$ is finite, then so is $c_*(\widehat{A})$, and \cite[Theorem~4.2]{F1} (applied to the universally measurable sets $\widehat{A}_j$, $j\in\mathbb N$, and $\widehat{A}$) gives
\[\gamma_{\widehat{A}_j}\to\gamma_{\widehat{A}}\text{ \ strongly and vaguely},\]
the kernel $\kappa$ being perfect. This proves (\ref{eq-cont1}), for, by Theorem~\ref{th-count},
\[\gamma_{\widehat{A}_j}=\gamma_{A_j},\quad\gamma_{\widehat{A}}=\gamma_A.\]

In view of the strong convergence of $(\gamma_{A_j})$ to $\gamma_A$, \cite[Lemma~3.2.4]{F1} yields
\[\kappa\gamma_A\geqslant\liminf_{j\to\infty}\,\kappa\gamma_{A_j}\text{ \ n.e.\ on $X$}.\]
The opposite being valid everywhere on $X$ by the vague convergence of $(\gamma_{A_j})$ to $\gamma_A$ (the principle of descent), (\ref{eq-cont2}) follows.

If the first and the second maximum principles are both fulfilled, the monotonicity property (Corollary~\ref{eq-mon}) implies that $(\kappa\gamma_{A_j})$ increases pointwise on $X$, and moreover \[\lim_{j\to\infty}\,\kappa\gamma_{A_j}\leqslant\kappa\gamma_A\text{ \ on $X$}.\] The opposite being obvious by the principle of descent (cf.\ above), this gives (\ref{eq-cont3}).

\subsection{Proof of Theorem~\ref{th-cont-bor2}}\label{pr-gap} As the net $(\|\gamma_{A_t}\|)$ is decreasing, we see from (\ref{eq.1.11}) that $(\gamma_{A_t})$ is strong Cauchy in $\mathcal E^+$, and there is therefore the unique $\gamma\in\mathcal E^+$ such that
\begin{equation}\label{eqcont}\gamma_{A_t}\to\gamma\text{ \ strongly and vaguely},\end{equation}
whence
\begin{equation}\label{bigcap1}\|\gamma\|^2=\lim_{t\in T}\,\|\gamma_{A_t}\|^2=\lim_{t\in T}\,c_*(A_t)\geqslant c_*(A).\end{equation}
Since $\kappa\gamma_{A_t}\leqslant1$ on $S(\gamma_{A_t})$ for every $t$, cf.\ (\ref{pr2}), we infer from the vague convergence of $(\gamma_{A_t})_{t\in T}$ to $\gamma$ that $\kappa\gamma\leqslant1$ on $S(\gamma)$, hence $\|\gamma\|^2=\int\kappa\gamma\,d\gamma\leqslant\gamma(X)$, and consequently
\begin{equation}\label{bigcap11}G(\gamma)=2\gamma(X)-\|\gamma\|^2\geqslant\|\gamma\|^2.\end{equation}

Suppose first that $A$ is the intersection of a decreasing net $(A_t)_{t\in T}$ of closed sets.
Then $\gamma$ belongs to the class $\mathcal E^+_{A_t}$ for every $t\in T$, $\mathcal E^+_{A_t}$ being strongly closed. Since for a closed set $F\subset X$, $\mathcal E^+_F$ consists of all $\nu\in\mathcal E^+$ supported by $F$, $\gamma$ is supported by every $A_t$, and hence by the intersection of $A_t$ over all $t$. Thus
\begin{equation}\label{bigcap}\gamma\in\mathcal E^+_A,\end{equation}
which in view of (\ref{G0}) yields
\[G(\gamma)\leqslant\sup_{\nu\in\mathcal E^+_A}\,G(\nu)=c_*(A).\]
Combining this with (\ref{bigcap1}) and (\ref{bigcap11}) we obtain
\[c_*(A)\geqslant G(\gamma)\geqslant\|\gamma\|^2\geqslant c_*(A),\]
whence $\gamma=\gamma_A$, by the uniqueness of the solution to extremal problem (\ref{G0}) (see Theorem~\ref{cor3}). Substituting $\gamma=\gamma_A$ into (\ref{eqcont}) gives (\ref{eq-cl}).

Let in addition the first and the second maximum principles be both fulfilled. By Corollary~\ref{eq-mon}, then the net $(\kappa\gamma_{A_t})_{t\in T}$ decreases pointwise on $X$, and moreover
\begin{equation}\label{ppp}\kappa\gamma_A(x)\leqslant\lim_{t\in T}\,\kappa\gamma_{A_t}(x)\text{ \ for all $x\in X$}.\end{equation}
The strong topology on $\mathcal E$ having a countable base of neighborhoods, it follows from (\ref{eq-cl}) that there exists a subsequence $(\gamma_{A_{t_j}})_{j\in\mathbb N}$ of the net $(\gamma_{A_t})_{t\in T}$ that converges strongly to $\gamma_A$. Applying \cite[Lemma~3.2.4]{F1} we therefore conclude that equality in fact prevails in (\ref{ppp}) for nearly all $x\in X$, which establishes (\ref{eq-cl-pot}).

Let $A$ now be the intersection of a decreasing sequence $(A_t)_{t\in T}$ of quasiclosed sets.
In view of the fact that a countable intersection of quasiclosed sets is likewise quasiclosed
\cite[Lemma~2.3]{F71}, the proof of (\ref{eq-cl}) and (\ref{eq-cl-pot}) is essentially the same as above, the only difference being in that of (\ref{bigcap}). Noting that for each $t$, the cone $\mathcal E^+_{A_t}$ is strongly closed (Lemma~\ref{l-quasi}), we obtain $\gamma\in\mathcal E^+_{A_t}$ (cf.\ above), which means that $(A_t)^c$ is locally $\gamma$-neg\-lig\-ible.
Being thus a countable union of locally $\gamma$-neg\-lig\-ible sets, $A^c$ is likewise locally $\gamma$-neg\-lig\-ible \cite[Section~IV.5.2]{B2}, which proves (\ref{bigcap}).

\section{Characterizations of $c_*(A)$ and $\gamma_A$ for arbitrary $A$}\label{sec-cr}

As in Sects.~\ref{sec-2}--\ref{sec-conv}, a kernel $\kappa$ is {\it perfect}. The purpose of this section is to establish alternative characterizations of the inner capacity $c_*(A)$ and the inner capacitary measure $\gamma_A$ for {\it arbitrary\/} sets $A\subset X$.
To achieve such a level of generality, assume in addition that {\it the first and the second maximum principles\/} are both fulfilled. Denote\footnote{\,$\widehat{\mathcal E}'_A$ would be the same if $\kappa\nu\leqslant1$ on $X$ were replaced by the apparently weaker inequality $\kappa\nu\leqslant1$ $\nu$-a.e., which is obvious from the lower semicontinuity of the potential $\kappa\nu$ on $X$, $\nu$ being positive, and Frostman's maximum principle.}
\begin{equation*}\label{QQ}\widehat{\mathcal E}'_A:=\bigl\{\nu\in\mathcal E'_A: \ \kappa\nu\leqslant1\text{\ on $X$}\bigr\},\end{equation*}
$\mathcal E'_A$ being the closure of $\mathcal E^+_A$ in the strong topology on $\mathcal E^+$.

\begin{theorem}[\footnote{Compare with Theorems~\ref{th-ap}, \ref{cor52}, \ref{cor3} and Corollary~\ref{cor52'}.}]\label{G0'ar}For arbitrary\/ $A\subset X$,
\begin{align}\label{G0ar}c_*(A)&=\sup_{\nu\in\mathcal E'_A}\,G(\nu),\\
\label{G00ar}c_*(A)&=\sup_{\nu\in\widehat{\mathcal E}'_A}\,\nu(X).\end{align}
If now\/ $c_*(A)<\infty$, the inner equilibrium measure\/ $\gamma_A$ can alternatively be characterized as the unique solution to either of extremal problems\/ {\rm(\ref{G0ar})} or\/ {\rm(\ref{G00ar})}, that is, $\gamma_A\in\widehat{\mathcal E}'_A\subset\mathcal E'_A$ and
\begin{align}
    &\max_{\nu\in\mathcal E'_A}\,G(\nu)=G(\gamma_A)\quad\bigl({}=c_*(A)\bigr),\label{cl1ar}\\
    &\max_{\nu\in\widehat{\mathcal E}'_A}\,\nu(X)=\gamma_A(X)\quad\bigl({}=c_*(A)\bigr).\label{cl2ar}
  \end{align}
Furthermore, $\gamma_A$ is uniquely characterized within\/ $\mathcal E'_A$ by each of\/ {\rm(a)} or\/ {\rm(c)}, where
\begin{itemize}
\item[{\rm(a)}] $\gamma_A(X)\geqslant\|\gamma_A\|^2\geqslant c_*(A)$.
\item[{\rm(c)}] $\kappa\gamma_A=1$ n.e.\ on\/ $A$.
\end{itemize}
\end{theorem}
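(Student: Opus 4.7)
The plan is to extend Theorems~\ref{th-ap} and~\ref{cor52} from the cone $\mathcal{E}^+_A$ to its strong closure $\mathcal{E}'_A$ via an upper semicontinuity argument for $G$ along strong-convergent nets, and then to identify the unique maximizer as $\gamma_A$ using Theorem~\ref{cor2} (to place $\gamma_A$ inside $\mathcal{E}'_A$) together with Theorem~\ref{th-intr} (for the characterization by (c)). The parallelogram law in the pre-Hilbert space $\mathcal{E}$ will then handle uniqueness throughout.

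To prove (\ref{G0ar}), the inequality $\sup_{\nu\in\mathcal{E}'_A}G(\nu)\geqslant c_*(A)$ is immediate from $\mathcal{E}^+_A\subset\mathcal{E}'_A$ and Theorem~\ref{th-ap}. For the reverse, given $\nu\in\mathcal{E}'_A$, I pick a net $(\nu_s)\subset\mathcal{E}^+_A$ with $\nu_s\to\nu$ strongly; perfectness upgrades this to vague convergence, so by Lemma~\ref{lemma-semi} applied to $\psi\equiv1$ we get $\liminf_s\nu_s(X)\geqslant\nu(X)$, while $\|\nu_s\|^2\to\|\nu\|^2$ by strong convergence. Hence
\[
\liminf_s G(\nu_s)=\bigl(\liminf_s 2\nu_s(X)\bigr)-\|\nu\|^2\geqslant G(\nu),
\]
and since $G(\nu_s)\leqslant c_*(A)$ by Theorem~\ref{th-ap}, (\ref{G0ar}) follows. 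Formula (\ref{G00ar}) is obtained in the same spirit: the upper bound uses $G(\nu)\geqslant\nu(X)$ for $\nu\in\widehat{\mathcal{E}}'_A$ (since $\|\nu\|^2=\int\kappa\nu\,d\nu\leqslant\nu(X)$) combined with (\ref{G0ar}), while the lower bound comes from observing that for each $K\in\mathfrak{C}_A$, Frostman's maximum principle upgrades (\ref{eq-pr-c2}) to $\kappa\gamma_K\leqslant1$ on all of $X$, so $\gamma_K\in\widehat{\mathcal{E}}'_A$, and $\gamma_K(X)=c(K)\to c_*(A)$ by (\ref{153}).

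Now suppose $c_*(A)<\infty$. Theorem~\ref{cor2} yields $\gamma_K\to\gamma_A$ strongly, hence $\gamma_A\in\mathcal{E}'_A$; Frostman's principle applied to (\ref{pr2}) gives $\kappa\gamma_A\leqslant1$ on $X$, placing $\gamma_A\in\widehat{\mathcal{E}}'_A$; and (\ref{pr1}) yields $G(\gamma_A)=\gamma_A(X)=c_*(A)$. Thus $\gamma_A$ attains both suprema. Uniqueness in (\ref{G0ar}) follows from convexity of $\mathcal{E}'_A$ (inherited from $\mathcal{E}^+_A$) via the quadratic identity
\[
G\bigl((\gamma_A+\gamma')/2\bigr)=\tfrac{1}{2}\bigl(G(\gamma_A)+G(\gamma')\bigr)+\tfrac{1}{4}\|\gamma_A-\gamma'\|^2,
\]
which for another maximizer $\gamma'$ combined with (\ref{G0ar}) forces $\|\gamma_A-\gamma'\|=0$, hence $\gamma'=\gamma_A$ by the energy principle. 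Uniqueness in (\ref{G00ar}) then follows because any maximizer $\sigma\in\widehat{\mathcal{E}}'_A$ satisfies $G(\sigma)\geqslant\sigma(X)=c_*(A)$, so $\sigma$ maximizes (\ref{G0ar}) as well.

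For the refined characterizations: if $\mu\in\mathcal{E}'_A$ with $\mu(X)\geqslant\|\mu\|^2\geqslant c_*(A)$, then $G(\mu)=2\mu(X)-\|\mu\|^2\geqslant\|\mu\|^2\geqslant c_*(A)$, forcing $\mu=\gamma_A$ by the just-established uniqueness, which handles (a). For (c), I invoke Theorem~\ref{th-intr}(c) applied with reference measure $\gamma_A\in\mathcal{E}^+$: since $\gamma_A\in\mathcal{E}'_A$ and $\kappa\gamma_A=1$ n.e.\ on $A$ by (\ref{F}) (available under Frostman's principle), $\gamma_A$ itself coincides with the inner balayage $(\gamma_A)^A$, which is the unique element of $\mathcal{E}'_A$ with potential $1$ n.e.\ on $A$; any $\mu\in\mathcal{E}'_A$ satisfying (c) therefore equals $\gamma_A$. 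I expect the main obstacle to be the semicontinuity step establishing (\ref{G0ar}): it is precisely here that perfectness is essential (to pass from strong to vague convergence of the approximating net) and is what lets us dispense with the strong closedness of $\mathcal{E}^+_A$ assumed in Theorem~\ref{cor3}, while the domination principle enters only indirectly through Theorem~\ref{th-intr} used in the proof of (c).
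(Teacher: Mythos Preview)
Your proof is correct and largely parallels the paper's, but the route to uniqueness in (\ref{cl1ar}) is genuinely different. Both arguments establish (\ref{G0ar}) by the same upper-semicontinuity step (vague lower semicontinuity of $\nu\mapsto\nu(X)$ plus strong continuity of the norm) and place $\gamma_A$ inside $\widehat{\mathcal E}'_A$ via Theorem~\ref{cor2} and Frostman's principle. For uniqueness, however, the paper invokes the balayage identity $\gamma_A=(\gamma_A)^A$ (Lemma~\ref{prop.1.1}) together with the orthogonal-projection characterization Theorem~\ref{th-intr}(b), rewriting $\|\gamma_A\|^2=\max_{\nu\in\mathcal E'_A}\bigl(2\int\kappa\gamma_A\,d\nu-\|\nu\|^2\bigr)$ with the unique maximizer $\gamma_A$, and then comparing with $G(\nu)$ via $\kappa\gamma_A\leqslant1$. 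You instead exploit the strict concavity identity $G\bigl((\gamma_A+\gamma')/2\bigr)=\tfrac12\bigl(G(\gamma_A)+G(\gamma')\bigr)+\tfrac14\|\gamma_A-\gamma'\|^2$ directly on the convex set $\mathcal E'_A$, which is more elementary and postpones any use of the domination principle to the proof of (c). The paper's approach, by contrast, makes the section's theme~--- the interplay between equilibrium and swept measures~--- explicit, reusing the single identity $(\gamma_A)^A=\gamma_A$ for both the uniqueness in (\ref{cl1ar}) and the characterization (c); your proof of (c) ends up reconstructing that identity anyway.
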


\begin{remark}As noted in Remark~\ref{closedness}, in the case where $\mathcal E^+_A$ is strongly unclosed, problems (\ref{G0}) and (\ref{G00}) of maximizing $G(\nu)$, resp.\ $\nu(X)$, over $\mathcal E^+_A$, resp.\ $\widehat{\mathcal E}^+_A$, are in general {\it unsolvable}. To overcome this inconvenience, we have now enlarged the classes of admissible measures so that {\it the extremal values remain the same}, but {\it the extremal problems be already solvable}~--- even for arbitrary $A$. See (\ref{G0ar})--(\ref{cl2ar}).
\end{remark}

\begin{remark}Being valid for arbitrary $A\subset X$, Theorem~\ref{G0'ar} seems to be new even for the Green kernels associated with the Laplacian on Greenian sets in $\mathbb R^n$, $n\geqslant2$ (in particular, for the Newtonian kernel on $\mathbb R^n$, $n\geqslant3$), as well as for the $\alpha$-Riesz $|x-y|^{\alpha-n}$ and the associated $\alpha$-Green kernels on $\mathbb R^n$, where $\alpha<2\leqslant n$.\end{remark}

\begin{theorem}\label{th-char} For any\/ $A\subset X$ with\/ $c_*(A)<\infty$, the inner equilibrium measure\/ $\gamma_A$ can alternatively be characterized as the unique measure of minimum potential
in the class\/ $\Gamma^+_A$, that is, $\gamma_A\in\Gamma^+_A$ and\/\footnote{Recall that $\gamma_A$ is the unique measure of minimum {\it energy\/} in $\Gamma^+_A$ (Theorem~\ref{prop.1.2'}), and this holds true even for an arbitrary perfect kernel.}
\begin{equation}
\label{eq-char-p}\kappa\gamma_A=\min_{\nu\in\Gamma_A^+}\,\kappa\nu\text{ \ on\/ $X$.}\end{equation}
If moreover\/ $X$ is\/ $\sigma$-compact, then\/ $\gamma_A$ is also of minimum total mass in\/ $\Gamma^+_A$, that is,
\begin{equation}\label{eq-char-m}\gamma_A(X)=\min_{\nu\in\Gamma_A^+}\,\nu(X).\end{equation}
\end{theorem}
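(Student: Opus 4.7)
The plan is to reduce the minimum potential claim to the compact case via the monotone approximation in Theorem~\ref{cor2}, and then derive the minimum total mass claim from the principle of positivity of mass (Theorem~\ref{pr-pos}).

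Fix an arbitrary $\nu\in\Gamma_A^+$; I aim to show $\kappa\gamma_A\leqslant\kappa\nu$ on all of $X$. For every $K\in\mathfrak C_A$ with $c(K)>0$, Frostman's maximum principle gives $\kappa\gamma_K=1$ n.e.\ on $K$ (cf.\ footnote~\ref{fr}), while $\nu\in\Gamma_A^+\subset\Gamma_K^+$ yields $\kappa\nu\geqslant1$ n.e.\ on $K$. Combining the two, $\kappa\gamma_K\leqslant\kappa\nu$ n.e.\ on $K$. Since $\gamma_K$ is a bounded positive measure concentrated on $K$, Lemma~\ref{l-negl} upgrades this to $\kappa\gamma_K\leqslant\kappa\nu$ $\gamma_K$-a.e.\ on $X$. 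The second (domination) maximum principle, applied to $\gamma_K,\nu\in\mathcal E^+$, then gives $\kappa\gamma_K\leqslant\kappa\nu$ everywhere on $X$. Letting $K\uparrow A$ and invoking the monotone pointwise convergence $\kappa\gamma_K\uparrow\kappa\gamma_A$ from Theorem~\ref{cor2}, I obtain $\kappa\gamma_A\leqslant\kappa\nu$ on $X$; as $\gamma_A\in\Gamma_A^+$ by Theorem~\ref{prop.1.2'}, this establishes (\ref{eq-char-p}).

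Uniqueness is immediate: any other $\mu\in\Gamma_A^+$ enjoying the same minimum potential property would satisfy both $\kappa\mu\leqslant\kappa\gamma_A$ and $\kappa\gamma_A\leqslant\kappa\mu$ on $X$, forcing $\kappa\mu=\kappa\gamma_A$; the energy principle (implication (\ref{E})) then yields $\mu=\gamma_A$. For the $\sigma$-compact case, the pointwise inequality $\kappa\gamma_A\leqslant\kappa\nu$ just proved holds in particular n.e.\ on $X$, and all hypotheses of Theorem~\ref{pr-pos} are in force (perfect kernels are strictly positive definite and hence strictly pseudo-positive, and both maximum principles are assumed). That theorem produces $\gamma_A(X)\leqslant\nu(X)$ for every $\nu\in\Gamma_A^+$, which together with $\gamma_A\in\Gamma_A^+$ is precisely (\ref{eq-char-m}).

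The only real subtlety I anticipate is the passage from \emph{n.e.\ on $K$} to \emph{$\gamma_K$-a.e.\ on $X$}: Lemma~\ref{l-negl} requires the exceptional set $E:=\{x\in K:\kappa\gamma_K(x)>\kappa\nu(x)\}$ to be $\gamma_K$-measurable and $\gamma_K$-$\sigma$-finite. Both are automatic here: $\gamma_K$ is bounded, hence $\sigma$-finite, and the potentials $\kappa\gamma_K$ and $\kappa\nu$ are lower semicontinuous, so $E$ is Borel and in particular universally measurable. Beyond this bookkeeping the argument proceeds directly from results already in hand.
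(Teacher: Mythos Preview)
Your argument is correct, but it takes a different route from the paper's own proof. The paper exploits the balayage framework developed in Sect.~\ref{i1}: by Lemma~\ref{prop.1.1} with $Q:=A$ one has $\gamma_A=(\gamma_A)^A$ (Eq.~(\ref{AA})), and since $\kappa\gamma_A=1$ n.e.\ on $A$ forces $\Gamma_{A,\gamma_A}^+=\Gamma_A^+$ (Eq.~(\ref{GG})), the minimum-potential characterization~(\ref{eq-char-p}) follows at once from Theorem~\ref{th-intr}(a) applied with $\mu:=\gamma_A$, and likewise (\ref{eq-char-m}) drops out of Corollary~\ref{bal-tot-m}. Your proof instead bypasses balayage entirely, working directly with the compact approximation $\kappa\gamma_K\uparrow\kappa\gamma_A$ from Theorem~\ref{cor2} together with the domination principle, and then invoking Theorem~\ref{pr-pos} for the mass inequality. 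What your approach buys is self-containment: it needs none of the external balayage machinery from \cite{Z-arx1} on which Theorem~\ref{th-intr} rests. What the paper's approach buys is thematic unity~--- it is precisely the advertised ``close interaction between the theory of capacities and that of balayage'' that the paper sets out to demonstrate, and the one-line reduction to Theorem~\ref{th-intr}(a) makes that interaction explicit.
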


\begin{corollary}\label{cor-char}If\/ $X$ is\/ $\sigma$-compact, then for any\/ $A\subset X$,
\begin{equation}c_*(A)=\inf_{\nu\in\Gamma^+_A}\,\nu(X).\label{ch-i-cap'00}\end{equation}
\end{corollary}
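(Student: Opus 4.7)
The statement follows almost immediately by combining Theorem~\ref{th-char} with property~(\ref{pr1}), once the trivial case is dispensed with. My plan is to split on whether $c_*(A)$ is finite.

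First I would handle the case $c_*(A)=\infty$. By footnote~\ref{f5}, $\Gamma_A^+=\varnothing$, so by the convention from footnote~\ref{f1} (infimum over the empty set equals $+\infty$) we have $\inf_{\nu\in\Gamma_A^+}\nu(X)=+\infty=c_*(A)$, and (\ref{ch-i-cap'00}) holds trivially. This case does not even use $\sigma$-compactness.

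Next, suppose $c_*(A)<\infty$. Then the inner equilibrium measure $\gamma_A$ exists (Theorem~\ref{prop.1.2'}), lies in $\Gamma_A^+$, and satisfies $\gamma_A(X)=c_*(A)$ by~(\ref{pr1}). Invoking the $\sigma$-compactness of $X$, Theorem~\ref{th-char} yields the minimum total mass characterization~(\ref{eq-char-m}), namely $\gamma_A(X)=\min_{\nu\in\Gamma_A^+}\nu(X)$. Chaining these two equalities gives
\[
c_*(A)=\gamma_A(X)=\min_{\nu\in\Gamma_A^+}\nu(X)=\inf_{\nu\in\Gamma_A^+}\nu(X),
\]
which is (\ref{ch-i-cap'00}).

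There is no real obstacle here; the only content of the corollary beyond Theorem~\ref{th-char} is the identification of the minimizing value with $c_*(A)$, which is immediate from~(\ref{pr1}). The role of $\sigma$-compactness is solely to justify the appeal to (\ref{eq-char-m}) in Theorem~\ref{th-char}, which in turn depends on the principle of positivity of mass (Theorem~\ref{pr-pos}). No further verification is required.
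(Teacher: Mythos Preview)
Your proof is correct and matches the paper's own argument essentially verbatim: the paper likewise splits into the case $c_*(A)=\infty$ (where $\Gamma_A^+=\varnothing$ by footnote~\ref{f5}) and the case $c_*(A)<\infty$ (where (\ref{ch-i-cap'00}) follows from (\ref{eq-char-m})). You have merely made explicit the use of (\ref{pr1}) to identify $\gamma_A(X)$ with $c_*(A)$, which the paper leaves implicit.
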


\begin{proof}
  If $c_*(A)=\infty$, then $\Gamma^+_A=\varnothing$ (footnote~\ref{f5}), and the right-hand side in (\ref{ch-i-cap'00}) is $+\infty$ by convention. In the remaining case $c_*(A)<\infty$, (\ref{ch-i-cap'00}) is implied by (\ref{eq-char-m}).
\end{proof}

\begin{theorem}\label{cor2alt}For any\/ $A\subset X$ with\/ $c_*(A)<\infty$, $\gamma_A$ can alternatively be characterized as the unique measure in\/ $\mathcal E^+$ satisfying any of the three limit relations
\begin{align}&\gamma_K\to\gamma_A\text{ \ strongly in\/ $\mathcal E^+$ as\/ $K\uparrow A$},\notag\\
&\gamma_K\to\gamma_A\text{ \ vaguely in\/ $\mathcal E^+$ as\/ $K\uparrow A$},\label{apr1}\\
&\kappa\gamma_K\uparrow\kappa\gamma_A\text{ \ pointwise on\/ $X$  as\/ $K\uparrow A$},\label{apr2}
\end{align}
where\/ $\gamma_K$ denotes the only measure in\/ $\mathcal E^+_K$ having the property\/ $\kappa\gamma_K=1$ n.e.\ on\/ $K$.\end{theorem}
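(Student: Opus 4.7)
The plan is to obtain Theorem~\ref{cor2alt} as an immediate corollary of Theorem~\ref{cor2}, together with standard uniqueness considerations. Theorem~\ref{cor2} already asserts, under the blanket assumptions of Section~\ref{sec-cr} (the kernel $\kappa$ is perfect and satisfies the first and second maximum principles), that the specific net $(\gamma_K)_{K\in\mathfrak C_A}$ converges to $\gamma_A$ strongly and vaguely in $\mathcal E^+$, and that $\kappa\gamma_K\uparrow\kappa\gamma_A$ pointwise on $X$ as $K\uparrow A$. In particular, note that for each $K\in\mathfrak C_A$ with $c(K)>0$ the measure $\gamma_K$ indicated in the statement of Theorem~\ref{cor2alt} exists uniquely: it is the capacitary measure on the compact set $K$, and Frostman's maximum principle (cf.\ footnote~\ref{fr}) combined with \eqref{eq-pr-c1}--\eqref{eq-pr-c2} gives $\kappa\gamma_K=1$ n.e.\ on $K$; uniqueness within $\mathcal E^+_K$ with this property follows from \eqref{E}. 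Thus existence of \emph{some} $\mu\in\mathcal E^+$ satisfying any one of the three limit relations is clear: $\mu=\gamma_A$ works.

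For uniqueness, I would treat each of the three relations in turn. Suppose first that some $\mu\in\mathcal E^+$ satisfies $\gamma_K\to\mu$ strongly as $K\uparrow A$. Since the kernel is perfect, hence strictly positive definite, the seminorm $\|\cdot\|$ is a norm on $\mathcal E^+$, and strong limits in this normed space are unique; comparing with Theorem~\ref{cor2} we get $\mu=\gamma_A$. Suppose next that $\gamma_K\to\mu$ vaguely as $K\uparrow A$. The vague topology on $\mathfrak M$ is Hausdorff (see \cite[Section~III.1.9]{B2}), so vague limits are unique, and again $\mu=\gamma_A$ by Theorem~\ref{cor2}.

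For the third relation, suppose $\mu\in\mathcal E^+$ satisfies $\kappa\gamma_K\uparrow\kappa\mu$ pointwise on $X$ as $K\uparrow A$. Combining with the analogous relation $\kappa\gamma_K\uparrow\kappa\gamma_A$ furnished by Theorem~\ref{cor2}, we conclude that the common pointwise limit forces $\kappa\mu=\kappa\gamma_A$ everywhere on $X$, a fortiori n.e.\ on $X$. Since $\mu,\gamma_A\in\mathcal E^+$ and the energy principle holds, implication \eqref{E} gives $\mu=\gamma_A$.

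There is no real obstacle here: the substance of the theorem lies in Theorem~\ref{cor2}, and the present statement merely packages the converse (uniqueness) direction. The only minor point requiring a brief justification is the unique existence of the auxiliary measures $\gamma_K$ on compact $K$ with $\kappa\gamma_K=1$ n.e.\ on $K$, which is covered by the standard theory of capacitary/equilibrium measures on compact sets under Frostman's maximum principle.
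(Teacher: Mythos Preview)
Your argument is correct. It takes a different route from the paper's own proof, though. The paper derives Theorem~\ref{cor2alt} from the balayage machinery: using Lemma~\ref{prop.1.1} with $Q:=A$ to get $\gamma_A=(\gamma_A)^A$ (relation~(\ref{AA})), it then reads off all three characterizations at once from Theorem~\ref{th-intr}(d) applied with $\mu:=\gamma_A$, after observing that the balayage $(\gamma_A)^K$~--- the unique $\nu\in\mathcal E^+_K$ with $\kappa\nu=\kappa\gamma_A$ n.e.\ on $K$~--- coincides with the unique $\nu\in\mathcal E^+_K$ with $\kappa\nu=1$ n.e.\ on $K$, since $\kappa\gamma_A=1$ n.e.\ on $A\supset K$. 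Your approach instead invokes Theorem~\ref{cor2} directly for existence and then handles uniqueness by the Hausdorff property of the strong and vague topologies and, for the potential relation, by the elementary observation that a pointwise monotone limit is unique combined with~(\ref{E}). Your route is more self-contained and avoids the balayage layer entirely; the paper's route, on the other hand, is in keeping with its broader theme of deriving capacitary statements from balayage (Section~\ref{sec-cr}), and it yields the uniqueness assertions without a case split, since Theorem~\ref{th-intr}(d) already packages uniqueness for all three limit relations.
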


\begin{remark}
In $\mathbb R^n$, $n\geqslant2$, consider the $\alpha$-Riesz kernel $|x-y|^{\alpha-n}$ of order $\alpha\in(0,2]$, $\alpha<n$. Then the concept of inner equilibrium measure as well as its alternative characterizations, provided by formulae (\ref{eq-char-p}), (\ref{apr1}), and (\ref{apr2}), can be generalized to any $A\subset\mathbb R^n$ that is {\it inner\/ $\alpha$-thin at infinity}~--- even if the inner $\alpha$-Riesz capacity of $A$ is {\it infinite\/} (see \cite[Section~5]{Z-bal} and \cite[Sections~1.3, 2.1, 2.3]{Z-bal2}). Note that in this generalization, the inner $\alpha$-Riesz equilibrium measure $\gamma_A$ as well as the admissible measure in (\ref{eq-char-p}) must be allowed to be of {\it infinite\/} energy.
 \end{remark}

\subsection{Interaction between the concepts of equilibrium and swept measures} The proofs of Theorems~\ref{G0'ar}, \ref{th-char}, \ref{cor2alt} (Sects.~\ref{ch-pr3}--\ref{ch-pr2})
are based on the close interaction between the concepts of inner equilibrium and inner swept measures, discovered in Lemma~\ref{prop.1.1}, as well as on the results on inner balayage established in Sect.~\ref{i1}.

\begin{lemma}\label{prop.1.1}For any\/ $A,Q\subset X$ such that\/ $A\subset Q$ and\/ $c_*(Q)<\infty$,
\begin{equation}\label{eq.1.1}(\gamma_Q)^A=\gamma_A.\end{equation}
\end{lemma}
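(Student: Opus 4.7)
The plan is to derive (\ref{eq.1.1}) by identifying $\gamma_A$ as the unique measure in $\mathcal E'_A$ whose potential agrees with $\kappa\gamma_Q$ nearly everywhere on $A$, thereby matching the characterization of the inner balayage $(\gamma_Q)^A$ provided by Theorem~\ref{th-intr}(c). Since the hypotheses of Section~\ref{sec-cr} include both perfectness of $\kappa$ and the first and second maximum principles, the energy, consistency and domination principles required in Section~\ref{i1} are all at our disposal, so Theorem~\ref{th-intr} applies to $\mu:=\gamma_Q$ and the target set $A$.

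First, I would note that $A\subset Q$ together with the monotonicity of inner capacity yields $c_*(A)\leqslant c_*(Q)<\infty$, so by Theorem~\ref{prop.1.2'} the inner equilibrium measure $\gamma_A\in\mathcal E^+$ exists and, thanks to Frostman's maximum principle, satisfies $\kappa\gamma_A=1$ n.e.\ on $A$; similarly, $\kappa\gamma_Q=1$ n.e.\ on $Q$, and in particular n.e.\ on the subset $A$. Combining these two n.e.\ equalities (via the countable subadditivity of inner capacity on universally measurable sets, Lemma~\ref{str}) gives
\[
\kappa\gamma_A=\kappa\gamma_Q\text{ \ n.e.\ on }A.
\]

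Next, I would verify that $\gamma_A$ actually lies in $\mathcal E'_A$. This is precisely the content of Theorem~\ref{G0'ar}, which asserts $\gamma_A\in\widehat{\mathcal E}'_A\subset\mathcal E'_A$. With these two ingredients — namely $\gamma_A\in\mathcal E'_A$ and $\kappa\gamma_A=\kappa\gamma_Q$ n.e.\ on $A$ — Theorem~\ref{th-intr}(c), applied to $\mu=\gamma_Q$, uniquely identifies such a measure as the inner balayage of $\gamma_Q$ onto $A$. Hence $\gamma_A=(\gamma_Q)^A$, which is (\ref{eq.1.1}).

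There is essentially no hard step: the whole argument is a routine invocation of two previously established characterizations, one for $\gamma_A$ (Theorem~\ref{G0'ar}, item (c), together with Theorem~\ref{prop.1.2'} and Frostman's principle) and one for $(\gamma_Q)^A$ (Theorem~\ref{th-intr}(c)). The only mild subtlety worth flagging is the need to use Frostman's maximum principle to upgrade $\kappa\gamma_Q\geqslant1$ n.e.\ on $Q$ to the equality $\kappa\gamma_Q=1$ n.e.\ on $Q$, which is where the standing assumption of the first maximum principle in Section~\ref{sec-cr} is essential; without it, one would only know $\kappa\gamma_A=1\leqslant\kappa\gamma_Q$ n.e.\ on $A$, which is insufficient to apply the uniqueness part of Theorem~\ref{th-intr}(c).
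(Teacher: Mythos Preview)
Your argument is essentially sound, but there is a circularity you need to repair. To obtain $\gamma_A\in\mathcal E'_A$ you invoke Theorem~\ref{G0'ar}; however, in the paper's logical order Lemma~\ref{prop.1.1} is precisely the tool on which the proof of Theorem~\ref{G0'ar} rests (see the opening sentence of Section~\ref{ch-pr3} and equation~(\ref{AA}), which is nothing but the present lemma with $Q:=A$). So as written, your proof appeals to a theorem whose proof depends on the very lemma you are establishing. The fix is immediate: the membership $\gamma_A\in\mathcal E'_A$ is available earlier, directly from Theorem~\ref{cor2} (or Lemma~\ref{O}), since $\gamma_K\to\gamma_A$ strongly with $\gamma_K\in\mathcal E^+_K\subset\mathcal E^+_A$, and $\mathcal E'_A$ is by definition the strong closure of $\mathcal E^+_A$. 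With that replacement your argument goes through.

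Once repaired, your route differs from the paper's. You use the characterization Theorem~\ref{th-intr}(c) of the swept measure (unique $\nu\in\mathcal E'_A$ with $\kappa\nu=\kappa\gamma_Q$ n.e.\ on $A$) and verify that $\gamma_A$ satisfies it. The paper instead uses Definition~\ref{def-bal-n} directly: both $(\gamma_Q)^A$ and $\gamma_A$ are unique energy minimizers over $\Gamma_{A,\gamma_Q}^+$ and $\Gamma_A^+$ respectively, and one checks the set-theoretic equality $\Gamma_{A,\gamma_Q}^+=\Gamma_A^+$ from $\kappa\gamma_Q=1$ n.e.\ on $A$. The paper's approach is slightly leaner in that it never needs the inclusion $\gamma_A\in\mathcal E'_A$, whereas your approach has the advantage of exhibiting the equilibrium measure as the concrete witness in characterization~(c).
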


\begin{proof}By Definition~\ref{def-bal-n} and Lemma~\ref{def-bal-n-u} with $\mu:=\gamma_Q$, $(\gamma_Q)^A$ is the unique measure minimizing the energy over $\Gamma_{A,\gamma_Q}^+$, whereas $\gamma_A$ is the unique solution to the minimum energy problem over $\Gamma_A^+$ (Theorem~\ref{prop.1.2'}). It is thus enough to show that
\begin{equation}\label{GG}\Gamma_{A,\gamma_Q}^+=\Gamma_A^+.\end{equation}
This however is obvious in view of the fact that for any given $\nu\in\Gamma_{A,\gamma_Q}^+\cup\Gamma_A^+$,
\[\kappa\nu\geqslant1=\kappa\gamma_Q\text{ \ n.e.\ on $A$},\]
which in turn is derived from relations (\ref{io-n}), (\ref{F}), and (\ref{def-in}) (the first two being applied to $\gamma_Q$) by making use of Lemma~\ref{str}.
\end{proof}

\subsection{Proof of Theorem~\ref{G0'ar}}\label{ch-pr3}
Assume $c_*(A)<\infty$, for if not, (\ref{G0ar}), resp.\ (\ref{G00ar}), holds in the sense that its both sides equal $+\infty$, which is obvious from (\ref{G0}), resp.\ (\ref{G00}), in view of the inclusion $\mathcal E^+_A\subset\mathcal E'_A$, resp.\ $\widehat{\mathcal E}^+_A\subset\widehat{\mathcal E}'_A$.
Also assume that $c_*(A)>0$, for if not, $\mathcal E'_A=\widehat{\mathcal E}^+_A=\{0\}$ by Lemma~\ref{l-negl1}, and the theorem holds trivially with $\gamma_A=0$.

For each $\nu_0\in\mathcal E'_A$, there is a sequence $(\nu_j)\subset\mathcal E^+_A$ converging to $\nu_0$ both strongly and vaguely, the kernel $\kappa$ being perfect. Noting that $\nu(X)$ is {\it vaguely lower semicontinuous\/} on positive measures, whereas $\|\nu\|^2$ is {\it strongly continuous\/} on $\mathcal E^+$, we obtain
\[G(\nu_0)\leqslant\liminf_{j\to\infty}\,G(\nu_j)\leqslant\sup_{\nu\in\mathcal E^+_A}\,G(\nu),\]
and letting now $\nu_0$ range over $\mathcal E'_A$ gives
\[\sup_{\nu\in\mathcal E'_A}\,G(\nu)\leqslant\sup_{\nu\in\mathcal E^+_A}\,G(\nu).\]
The opposite being obvious, combining this with (\ref{G0}) establishes (\ref{G0ar}).

The class $\mathcal E'_A$ being strongly closed, $\gamma_A\in\mathcal E'_A$ (cf.\ Lemma~\ref{O} or Theorem~\ref{cor2}). Furthermore, $\gamma_A\in\widehat{\mathcal E}'_A$, for $\kappa\gamma_A\leqslant1$ on $X$ by Frostman's maximum principle.

Since according to Lemma~\ref{prop.1.1} with $Q:=A$,
\begin{equation}\label{AA}\gamma_A=(\gamma_A)^A,\end{equation}
we infer from Theorem~\ref{th-intr}(b) that
\begin{equation*}\label{gammaa'}\min_{\nu\in\mathcal E_A'}\,\|\gamma_A-\nu\|^2=0,\end{equation*}
the minimum being attained at the unique $\gamma_A\in\mathcal E_A'$.\footnote{The inner equilibrium measure $\gamma_A$ serves here as the (unique) orthogonal projection of $\gamma_A$ in the pre-Hil\-bert space $\mathcal E$ onto the convex, strongly complete cone $\mathcal E'_A$.}
Consequently,
\begin{equation}\label{gammaa}c_*(A)=\|\gamma_A\|^2=
\max_{\nu\in\mathcal E_A'}\,\Bigl(2\int\kappa\gamma_A\,d\nu-\|\nu\|^2\Bigr),\end{equation}
the maximum being likewise attained at the same (unique) $\gamma_A\in\mathcal E_A'$.

On account of the fact that $\kappa\gamma_A\leqslant1$ on $X$, we therefore obtain
\[c_*(A)=\max_{\nu\in\mathcal E_A'}\,\Bigl(2\int\kappa\gamma_A\,d\nu-\|\nu\|^2\Bigr)\leqslant\sup_{\nu\in\mathcal E'_A}\,\bigl(2\nu(X)-\|\nu\|^2\bigr)=\sup_{\nu\in\mathcal E'_A}\,G(\nu)=c_*(A),\]
the last equality being valid according to (\ref{G0ar}). As $G(\gamma_A)=c_*(A)$, this implies that (\ref{cl1ar}) indeed holds true with the unique maximizing measure $\gamma_A$, the uniqueness being clear from the uniqueness of the maximizing measure in (\ref{gammaa}).

For every $\nu\in\widehat{\mathcal E}'_A$, we have $\|\nu\|^2\leqslant\nu(X)$, hence $G(\nu)\geqslant\nu(X)$, and consequently
\begin{equation}\label{prpr}c_*(A)=\gamma_A(X)\leqslant\sup_{\nu\in\widehat{\mathcal E}'_A}\,\nu(X)\leqslant\sup_{\nu\in\widehat{\mathcal E}'_A}\,G(\nu)\leqslant\max_{\nu\in\mathcal E'_A}\,G(\nu)=G(\gamma_A)=c_*(A),\end{equation}
the first inequality being valid by $\gamma_A\in\widehat{\mathcal E}'_A$, and the last two equalities by (\ref{cl1ar}). This proves both (\ref{G00ar}) and (\ref{cl2ar}). Furthermore, the solution $\gamma_A$ to problem (\ref{G00ar}) is unique, which is seen from (\ref{prpr}) in view of the uniqueness of the solution to problem (\ref{G0ar}).

According to (\ref{pr1}) and (\ref{F}), $\gamma_A$ has properties (a) and (c). It thus remains to show that each of these (a) and (c) determines $\gamma_A$ uniquely within $\mathcal E'_A$.

If $\mu$ is another measure in $\mathcal E'_A$ satisfying (a), then,  by (\ref{cl1ar}),
\[c_*(A)=\max_{\nu\in\mathcal E'_A}\,G(\nu)\geqslant G(\mu)=2\mu(X)-\|\mu\|^2\geqslant\|\mu\|^2\geqslant c_*(A).\]
Thus $\mu$ must be the (unique) solution to problem (\ref{G0ar}), and so indeed $\mu=\gamma_A$.

Assume finally that $\tau$ is another measure in $\mathcal E'_A$ with $\kappa\tau=1$ n.e.\ on $A$; then
\begin{equation*}\label{cc}\kappa\tau=\kappa\gamma_A\text{ \ n.e.\ on $A$},\end{equation*}
which is clear from (\ref{F}) by use of Lemma~\ref{str}. But according to Theorem~\ref{th-intr}(c) with $\mu:=\gamma_A$, $(\gamma_A)^A$ is the only measure in $\mathcal E'_A$ whose potential equals $\kappa\gamma_A$ nearly everywhere on $A$. Hence, $\tau=(\gamma_A)^A$, which combined with (\ref{AA}) gives $\tau=\gamma_A$.

\subsection{Proof of Theorem~\ref{th-char}}\label{ch-pr2''} On account of (\ref{AA}), we infer from Theorem~\ref{th-intr}(a) with $\mu:=\gamma_A$ that $\gamma_A$ is the only measure of minimum potential in the class $\Gamma_{A,\gamma_A}$:
\begin{equation*}\label{comp}\kappa\gamma_A=\min_{\nu\in\Gamma_{A,\gamma_A}}\,\kappa\nu\text{ \ on $X$},\end{equation*}
which combined with (\ref{GG}) with $Q:=A$ results in (\ref{eq-char-p}). Similarly, (\ref{eq-char-m}) follows by substituting (\ref{GG}) (with $Q:=A$) and (\ref{AA}) into (\ref{eq-t-m}) with $\mu:=\gamma_A$.

\subsection{Proof of Theorem~\ref{cor2alt}}\label{ch-pr2} In view of (\ref{AA}), we derive this from Theorem~\ref{th-intr}(d) with $\mu:=\gamma_A$ noting that the unique $\nu\in\mathcal E^+_K$ with  $\kappa\nu=\kappa\gamma_A$ n.e.\ on $K$ obviously coincides with the unique measure in $\mathcal E^+_K$ whose potential equals $1$ n.e.\ on $K$.

\section{On convergence and alternative characterizations of outer capacities and outer capacitary measures}\label{sec-ou-chara}

Assuming as usual that a kernel $\kappa$ is {\it perfect}, we shall now make the following two additional hypotheses concerning a locally compact space $X$:
\begin{itemize}\item[(H$_1$)] $X$ {\it is perfectly normal}.\footnote{By Urysohn's theorem \cite[Section~IX.1, Theorem~1]{B3}, a Hausdorff space $Y$ is said to be {\it normal\/} if for any disjoint closed $F_1,F_2\subset Y$, there exist disjoint open $D_1,D_2$ such that $F_i\subset D_i$ $(i=1,2)$. A normal space $Y$ is said to be {\it perfectly normal\/} \cite[Section~IX.4, Exercise~7]{B3} if each closed subset of $Y$ is a countable intersection of open sets.}
\item[(H$_2$)] $X$ {\it is\/ $\sigma$-compact}.\end{itemize}

\begin{remark} Given a locally compact space $X$, {\it both\/ {\rm(H$_1$)} and\/ {\rm(H$_2$)} are fulfilled if $X$ is sec\-ond-count\-able, but not the other way around.} Indeed, according to \cite[Section~IX.2, Corollary to Proposition~16]{B3}, $X$ is sec\-ond-count\-able if and only if it is metrizable and $\sigma$-com\-pact. Being therefore metrizable, a sec\-ond-count\-able, locally compact space $X$ must be perfectly normal \cite[Section~IX.1, Proposition~2]{B3}; whereas the converse fails in general by virtue of \cite[Section~IX.2, Exercise~13(b)]{B3}.\end{remark}

\begin{theorem}\label{l-top}Any Borel subset of a locally compact space\/ $X$ satisfying hypotheses\/ {\rm(H$_1$)} and\/ {\rm(H$_2$)} and endowed with a perfect kernel\/ $\kappa$, is capacitable.\end{theorem}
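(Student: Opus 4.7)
The plan is to apply Choquet's capacitability theorem to the outer capacity $c^*$. This requires two ingredients: first, verifying that $c^*$ is a topological Choquet capacity on $X$; and second, showing that every Borel subset of $X$ is $K$-analytic, i.e.\ lies in the Suslin class generated by the compact paving $\mathfrak K$ of $X$.

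For the topological reduction, I would argue as follows. By (H$_1$), every closed subset of $X$ is a countable intersection of open sets, equivalently every open set is $F_\sigma$. Combined with (H$_2$), this implies that every open set is $K_\sigma$. Hence countable unions and intersections of compacta already produce all open and closed sets, and the Suslin operation applied to $\mathfrak K$ yields every Borel subset of $X$; in particular, all Borel sets are $K$-analytic.

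To verify that $c^*$ is a Choquet capacity, monotonicity is immediate. For continuity from above on decreasing sequences $K_j\downarrow K$ of compacta, the identity $c^*=c$ on compacta reduces the claim to Theorem~\ref{th-cont-bor2} applied from any index $j_0$ with $c(K_{j_0})<\infty$; if no such $j_0$ exists, both sides equal $+\infty$. The delicate point is continuity from below: given $A_n\uparrow A$ and $\varepsilon>0$, I would pick open $D_n\supset A_n$ with $c_*(D_n)\leqslant c^*(A_n)+\varepsilon 2^{-n-1}$ and, using strong subadditivity of $c_*$ on open sets (a consequence of the pre-Hilbert structure on $\mathcal E$ together with Lemma~\ref{str}), construct inductively an increasing sequence of open sets $G_n\supset A_n$ with $c_*(G_n)\leqslant c^*(A_n)+\varepsilon(1-2^{-n})$. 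Setting $G:=\bigcup_n G_n$, Theorem~\ref{th-cont2} applied to the increasing sequence of open sets $(G_n)$ (valid since $X$ is $\sigma$-compact) yields $c^*(A)\leqslant c_*(G)=\lim_n c_*(G_n)\leqslant\lim_n c^*(A_n)+\varepsilon$, and letting $\varepsilon\to 0$ gives the desired continuity.

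Once $c^*$ is established as a Choquet capacity and Borel sets are shown to be $K$-analytic, Choquet's capacitability theorem gives that every Borel subset $A$ of $X$ satisfies
\[
c^*(A)\;=\;\sup\bigl\{c^*(K):K\subset A,\ K\text{ compact}\bigr\}\;=\;\sup\bigl\{c(K):K\subset A\text{ compact}\bigr\}\;=\;c_*(A),
\]
the last equality being (\ref{153}); hence $A$ is capacitable. The principal obstacle is the continuity-from-below step for $c^*$: the Choquet-style inductive construction of $G_n$ relies on strong subadditivity of $c_*$, which for a perfect kernel should follow from the parallelogram identity in $\mathcal E$ applied to equilibrium measures of open sets of finite capacity (in combination with Lemma~\ref{str} to handle the possible unboundedness and non-measurability). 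The remaining steps are either immediate or direct applications of results already established in the paper.
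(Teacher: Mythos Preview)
The paper's own proof is a single sentence citing \cite[Theorem~4.5]{F1}, so you are essentially reconstructing from scratch the content of that reference. Your route via Choquet's capacitability theorem is the standard one and is, in substance, exactly how Fuglede proceeds; so the two approaches coincide, only at different levels of detail.

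A few comments on your sketch. First, the genuine work, as you correctly identify, is strong subadditivity of $c_*$ on open sets. This is \emph{not} a consequence of Lemma~\ref{str}, which gives only countable subadditivity; the inequality $c_*(D_1\cup D_2)+c_*(D_1\cap D_2)\leqslant c_*(D_1)+c_*(D_2)$ requires a separate argument comparing the equilibrium measures $\gamma_{D_1},\gamma_{D_2},\gamma_{D_1\cup D_2},\gamma_{D_1\cap D_2}$ via the pre-Hilbert structure (this is Fuglede's Lemma~4.1.2, proved independently of capacitability). Your parenthetical ``should follow from the parallelogram identity'' points in the right direction, but you would need to carry this out rather than invoke Lemma~\ref{str}. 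Second, a minor point: for a perfect (hence strictly positive definite) kernel, $c(K)<\infty$ for \emph{every} compact $K$, since the infimum defining $w(K)$ is attained at a nonzero measure; your ``if no such $j_0$ exists'' alternative is therefore vacuous. Third, your topological reduction is correct: under (H$_1$) every open set is $F_\sigma$, and combined with (H$_2$) every open set is $K_\sigma$, whence the Borel $\sigma$-algebra lies inside the Suslin class over compacta.

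In short: your argument is sound and recovers precisely the cited theorem; the paper simply defers the work to \cite{F1} rather than reproducing it.
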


\begin{proof}This follows directly from \cite[Theorem~4.5]{F1}.\end{proof}

In the remainder of this section, assume $A\subset X$ to be {\it Borel}, hence capacitable:
\[c_*(A)=c^*(A)\quad\bigl({}=:c(A)\bigr).\]
If moreover $c(A)<\infty$, then the (unique) inner capacitary measure $\gamma_A$ (Theorem~\ref{prop.1.2'}) serves simultaneously as the {\it outer capacitary measure\/} $\gamma^*_A$, defined as the (unique) measure of minimum energy  in the class $\Gamma^*_A$, where
\[\Gamma^*_A:=\bigl\{\nu\in\mathcal E^+: \ \kappa\nu\geqslant1\text{ \ q.e.\ on $A$}\bigr\}.\]
Indeed, the equality $\gamma_A=\gamma_A^*$ can be concluded from \cite[Theorem~4.3]{F1} noting that
\[\Gamma^+_A=\Gamma^*_A,\]
which in turn follows from the fact that $A\cap\{\kappa\nu<1\bigr\}$ is Borel, hence capacitable.

This makes it possible to utilize the results on inner capacities and inner capacitary measures, obtained in Sect.~\ref{sec-2}--\ref{sec-cr} above, in order to perform a similar analysis for outer capacities and outer capacitary measures. We are thus led to the following Theorems~\ref{cor2-ou}--\ref{cor2alt-ou} (compare with Theorems~\ref{th-ap}, \ref{cor52}, \ref{cor3}, \ref{cor2}, \ref{th-cont-bor2}, \ref{G0'ar},  \ref{th-char}, \ref{cor2alt}).

\begin{theorem}\label{cor2-ou}For any Borel\/ $A\subset X$ with\/ $c^*(A)<\infty$,
\[\gamma_K\to\gamma_A^*\text{ \ strongly and vaguely in\/ $\mathcal E^+$ as\/ $K\uparrow A$.}\]
If moreover the first and the second maximum principles both hold, then also
\[\kappa\gamma_K\uparrow\kappa\gamma_A^*\text{ \ pointwise on\/ $X$ as\/ $K\uparrow A$.}\]
\end{theorem}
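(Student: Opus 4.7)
The plan is to reduce Theorem~\ref{cor2-ou} directly to its inner analogue, Theorem~\ref{cor2}, by exploiting the identification of inner and outer capacitary measures for Borel sets which is established in the paragraph immediately preceding the statement.

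First I would record that, since $A$ is Borel and $X$ together with $\kappa$ satisfy the standing hypotheses (perfectly normal, $\sigma$-compact, perfect kernel), Theorem~\ref{l-top} yields capacitability: $c_*(A) = c^*(A) = c(A) < \infty$. In particular, Theorem~\ref{prop.1.2'} produces the (unique) inner capacitary measure $\gamma_A \in \Gamma_A^+$. The argument given in the text just before the theorem then shows $\Gamma_A^+ = \Gamma_A^*$ (the set $A \cap \{\kappa\nu < 1\}$ being Borel, hence capacitable, so ``n.e.\ on $A$'' and ``q.e.\ on $A$'' are equivalent for potentials of measures in $\mathcal{E}^+$), and consequently the two minimum energy problems coincide, giving
\[
\gamma_A = \gamma_A^*.
\]

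With this identification in hand, I simply invoke Theorem~\ref{cor2} applied to the same set $A$ (which satisfies $c_*(A) < \infty$ by capacitability). That theorem furnishes
\[
\gamma_K \to \gamma_A \text{ strongly and vaguely in } \mathcal{E}^+ \text{ as } K \uparrow A,
\]
and, under the additional assumption that both the first and second maximum principles hold, the monotone pointwise convergence
\[
\kappa\gamma_K \uparrow \kappa\gamma_A \text{ on } X \text{ as } K \uparrow A.
\]
Substituting $\gamma_A = \gamma_A^*$ into both relations yields exactly the claimed convergences.

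There is no real obstacle: the whole content of Theorem~\ref{cor2-ou} is the passage from the inner to the outer theory, which is entirely accomplished by Fuglede's capacitability theorem (Theorem~\ref{l-top}) for Borel sets under hypotheses (H$_1$) and (H$_2$) together with the equality $\Gamma_A^+ = \Gamma_A^*$. Once $\gamma_A = \gamma_A^*$ is in place, both assertions of the theorem are immediate consequences of Theorem~\ref{cor2}, with no further estimation or limiting argument required.
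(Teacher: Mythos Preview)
Your proposal is correct and follows exactly the paper's approach: the paper does not supply a separate proof of Theorem~\ref{cor2-ou} but instead derives Theorems~\ref{cor2-ou}--\ref{cor2alt-ou} from their inner analogues (here Theorem~\ref{cor2}) via the capacitability of Borel sets (Theorem~\ref{l-top}) and the resulting identification $\gamma_A=\gamma_A^*$ established just before the statement. Your write-up makes this reduction explicit and is entirely in line with the text.
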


\begin{theorem}\label{th-cont-bor2-ou}Let\/ $A$ be the intersection of a decreasing sequence\/ $(A_j)$ of Borel, quasiclosed sets\/ $A_j\subset X$ such that\/
$c^*(A_{j_0})<\infty$ for some\/ $j_0$. Then
\[\gamma^*_{A_j}\to\gamma^*_A\text{ \ strongly and vaguely as\/ $j\to\infty$}.\]
If moreover the first and the second maximum principles both hold, then also
\[\kappa\gamma^*_{A_j}\downarrow\kappa\gamma^*_A\text{ \ pointwise q.e.\ on\/ $X$}.\]
\end{theorem}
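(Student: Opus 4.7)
The plan is to reduce Theorem~\ref{th-cont-bor2-ou} to its inner analogue Theorem~\ref{th-cont-bor2} via the capacitability results for Borel sets (Theorem~\ref{l-top} and the identification $\gamma_A=\gamma_A^*$ recalled at the beginning of Section~\ref{sec-ou-chara}), and then to upgrade the n.e.\ convergence of potentials to q.e.\ convergence by means of a Borel-measurability argument applied to the exceptional set.

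First, I would observe that a countable intersection of Borel, quasiclosed sets is again Borel and quasiclosed (the latter by \cite[Lemma~2.3]{F71}, already invoked in the proof of Theorem~\ref{th-cont-bor2}). Hence both $A$ and each $A_j$ (for $j\geqslant j_0$) are Borel with finite outer capacity ${}\leqslant c^*(A_{j_0})<\infty$. By Theorem~\ref{l-top} they are all capacitable, so $c_*(A_{j_0})=c^*(A_{j_0})<\infty$, and by the discussion preceding Theorem~\ref{cor2-ou} we have
\[\gamma_{A_j}^*=\gamma_{A_j}\quad(j\geqslant j_0),\qquad\gamma_A^*=\gamma_A.\]
Applying Theorem~\ref{th-cont-bor2} to the decreasing sequence $(A_j)_{j\geqslant j_0}$ of quasiclosed sets with $c_*(A_{j_0})<\infty$ immediately yields the strong and vague convergence $\gamma_{A_j}^*\to\gamma_A^*$.

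Assume now additionally the first and second maximum principles. Theorem~\ref{th-cont-bor2} then gives the pointwise decrease $\kappa\gamma_{A_j}\downarrow\kappa\gamma_A$ n.e.\ on $X$; equivalently, the set
\[E:=\Bigl\{x\in X:\ \lim_{j\to\infty}\,\kappa\gamma_{A_j}^*(x)>\kappa\gamma_A^*(x)\Bigr\}\]
has inner capacity zero. The key step is to upgrade $c_*(E)=0$ to $c^*(E)=0$. For this I would note that each $\kappa\gamma_{A_j}^*$ is l.s.c.\ on $X$, hence Borel measurable, so the decreasing pointwise limit $\lim_j\kappa\gamma_{A_j}^*$ is Borel; since $\kappa\gamma_A^*$ is likewise Borel, the set $E$ is Borel. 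By Theorem~\ref{l-top}, $E$ is therefore capacitable, which together with $c_*(E)=0$ forces $c^*(E)=0$. This is exactly the required q.e.\ convergence $\kappa\gamma_{A_j}^*\downarrow\kappa\gamma_A^*$.

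The main conceptual obstacle is precisely this last step: Theorem~\ref{th-cont-bor2} only delivers convergence nearly everywhere, and in the outer setting one needs quasi-everywhere. However, since the full power of capacitability of Borel sets is available under hypotheses (H$_1$) and (H$_2$), and since the exceptional set is manifestly Borel (being cut out by l.s.c.\ potentials and their pointwise decreasing limit), the upgrade is essentially automatic. No new minimum-energy analysis is needed beyond what is already contained in Theorem~\ref{th-cont-bor2}.
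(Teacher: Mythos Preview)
Your proposal is correct and follows exactly the route the paper intends: the paper does not give a separate proof of Theorem~\ref{th-cont-bor2-ou} but simply declares that Theorems~\ref{cor2-ou}--\ref{cor2alt-ou} are obtained from their inner counterparts via the capacitability of Borel sets (Theorem~\ref{l-top}) and the resulting identification $\gamma_A=\gamma_A^*$. Your write-up supplies precisely the missing details of this reduction, including the observation that the exceptional set in (\ref{eq-cl-pot}) is Borel (as a sublevel set involving l.s.c.\ potentials and the pointwise limit of a countable decreasing family of such), hence capacitable, so that n.e.\ automatically upgrades to q.e.
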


\begin{theorem}\label{G0'-ou} For any Borel\/ $A\subset X$,
\begin{equation}\label{G0-ou}c^*(A)=\sup_{\nu\in\mathcal E^+_A}\,G(\nu)=
\sup_{\nu\in\widehat{\mathcal E}^+_A}\,\nu(X).\end{equation}
If moreover the class\/ $\mathcal E^+_A$ is strongly closed\/ {\rm(}or in particular if\/ $A$ is quasiclosed\/{\rm)}, and if\/ $c^*(A)<\infty$, then each of the two extremal problems appearing in\/ {\rm(\ref{G0-ou})} has the same unique solution, and it exactly equals the outer capacitary measure\/ $\gamma_A^*$.
This\/ $\gamma_A^*$ can also be uniquely characterized within\/ $\mathcal E^+_A$ by either of\/ {\rm(a$'$)} or\/ {\rm(b$'$)}, where
\begin{itemize}
\item[{\rm(a$'$)}] $\gamma_A^*(X)\geqslant\|\gamma_A^*\|^2\geqslant c^*(A)$.
\item[{\rm(b$'$)}] $\kappa\gamma_A^*\geqslant1$ q.e.\ on $A$, and\/ $\kappa\gamma_A^*\leqslant1$ $\gamma_A^*$-a.e.
\end{itemize}
If in addition Frostman's maximum principle is fulfilled, then\/ $\gamma_A^*$ is the only measure in\/ $\mathcal E^+_A$ having the property
\begin{itemize}
\item[{\rm(c$'$)}] $\kappa\gamma_A^*=1$ q.e.\ on\/ $A$.
\end{itemize}
\end{theorem}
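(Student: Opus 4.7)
The plan is to reduce Theorem~\ref{G0'-ou} to its inner-capacity counterparts~--- Theorems~\ref{th-ap}, \ref{cor52}, and \ref{cor3}~--- by invoking the capacitability of Borel sets. As a first step, I would apply Theorem~\ref{l-top} to conclude that every Borel $A\subset X$ satisfies $c_*(A)=c^*(A)$. Next, as noted in the discussion preceding Theorem~\ref{cor2-ou}, for such Borel $A$ one also has $\Gamma_A^+=\Gamma_A^*$, because the exceptional set $A\cap\{\kappa\nu<1\}$ is Borel, hence capacitable, so its zero inner capacity coincides with zero outer capacity. Consequently $\gamma_A=\gamma_A^*$. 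With these identifications, the outer-capacity statements become mere rewordings of the inner ones.

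Given the identifications above, formula (\ref{G0-ou}) is immediate from Theorems~\ref{th-ap} and \ref{cor52}. The only check is that the hypothesis of Theorem~\ref{cor52} holds automatically in the present setting: if $X$ is noncompact then $\kappa\geqslant0$ by convention; if $X$ is compact, then a perfect kernel is in particular strictly pseudo-positive (since for any nonzero $\mu\in\mathfrak M^+$ of compact support, either $\kappa(\mu,\mu)=+\infty$ or $\mu\in\mathcal E^+$ with $\|\mu\|>0$ by the energy principle), which also validates the hypothesis by the footnote to Theorem~\ref{cor52}.

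For the second assertion, I would assume $\mathcal E^+_A$ strongly closed and $c^*(A)<\infty$, and invoke Theorem~\ref{cor3} with $c_*(A)=c^*(A)<\infty$. This gives at once that $\gamma_A=\gamma_A^*$ is the unique maximizer in both extremal problems of (\ref{G0-ou}), and is uniquely characterized within $\mathcal E^+_A$ by each of the properties (a), (b), and~--- under Frostman's principle~--- (c) there. Substituting $\gamma_A=\gamma_A^*$ and $c_*(A)=c^*(A)$ in (a) yields (a') verbatim.

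The only nontrivial point I anticipate is the passage from ``n.e.\ on $A$'' in (b), (c) to ``q.e.\ on $A$'' in (b'), (c'). I would handle it as follows. The exceptional sets are $A\cap\{\kappa\gamma_A^*<1\}$ (for the inequality $\kappa\gamma_A^*\geqslant1$) and $A\cap\{\kappa\gamma_A^*\ne1\}$ (for the equality in (c)). By the lower semicontinuity of $\kappa\gamma_A^*$ on $X$ and the Borelness of $A$, both exceptional sets are Borel, hence capacitable by Theorem~\ref{l-top}; therefore their inner and outer capacities coincide, so ``n.e.\ on $A$'' and ``q.e.\ on $A$'' are equivalent statements in each of these clauses. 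This makes (b)$\Leftrightarrow$(b') and (c)$\Leftrightarrow$(c'), and the uniqueness within $\mathcal E^+_A$ transfers from Theorem~\ref{cor3} without change. Once this simple equivalence between n.e.\ and q.e.\ on the relevant Borel exceptional sets is recorded, the theorem is a direct corollary of Theorem~\ref{cor3} combined with the capacitability result Theorem~\ref{l-top}.
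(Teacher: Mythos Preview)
Your proposal is correct and follows exactly the route the paper takes: in Section~\ref{sec-ou-chara} the paper does not give a separate proof of Theorem~\ref{G0'-ou} but simply observes that under (H$_1$), (H$_2$) every Borel set is capacitable (Theorem~\ref{l-top}), that $\Gamma_A^+=\Gamma_A^*$ and $\gamma_A=\gamma_A^*$, and then declares Theorems~\ref{cor2-ou}--\ref{cor2alt-ou} as direct translations of the corresponding inner results (here Theorems~\ref{th-ap}, \ref{cor52}, \ref{cor3}). Your treatment of the n.e./q.e.\ passage via capacitability of the Borel exceptional sets, and your verification that the hypothesis of Theorem~\ref{cor52} holds automatically for a perfect kernel, are precisely the small checks the paper leaves implicit.
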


In the following three theorems assume additionally that the (perfect) kernel $\kappa$ satisfies both {\it the first and the second maximum principles}.

\begin{theorem}\label{char-ou1} For any Borel\/ $A\subset X$,
\begin{equation}
c^*(A)=\sup_{\nu\in\mathcal E'_A}\,G(\nu)=\sup_{\nu\in\widehat{\mathcal E}'_A}\,\nu(X).\label{ch-ou5-ou}
\end{equation}
If now\/ $c^*(A)<\infty$, then each of the two extremal problems appearing in\/ {\rm(\ref{ch-ou5-ou})} has the same unique solution, and it exactly equals the outer equilibrium measure\/ $\gamma_A^*$. This\/ $\gamma_A^*$ can also be uniquely characterized within\/ $\mathcal E'_A$ by either of\/ {\rm(a$'$)} or\/ {\rm(c$'$)}, where
\begin{itemize}
\item[{\rm(a$'$)}] $\gamma_A^*(X)\geqslant\|\gamma_A^*\|^2\geqslant c^*(A)$.
\item[{\rm(c$'$)}] $\kappa\gamma^*_A=1$ q.e.\ on\/ $A$.
\end{itemize}
\end{theorem}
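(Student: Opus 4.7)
The plan is to reduce Theorem~\ref{char-ou1} to its inner counterpart Theorem~\ref{G0'ar} by exploiting the capacitability of Borel sets. Under hypotheses (H$_1$), (H$_2$) and perfectness of $\kappa$, Theorem~\ref{l-top} yields $c_*(A)=c^*(A)=:c(A)$ for any Borel $A\subset X$. Moreover, as noted in the text preceding Theorem~\ref{cor2-ou}, Borelness of $A$ makes $A\cap\{\kappa\nu<1\}$ Borel, hence capacitable, so $\Gamma^+_A=\Gamma^*_A$; consequently the minimum energy problems defining $\gamma_A$ and $\gamma_A^*$ coincide, which gives $\gamma_A=\gamma_A^*$ whenever $c(A)<\infty$. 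These two identifications do essentially all the work.

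With $c_*(A)=c^*(A)$, the two equalities in (\ref{ch-ou5-ou}) are precisely (\ref{G0ar}) and (\ref{G00ar}), read for the common value $c(A)$; in particular, if $c^*(A)=\infty$ then all four suprema are infinite, and no finiteness hypothesis is needed to read off (\ref{ch-ou5-ou}) from Theorem~\ref{G0'ar}. When $c^*(A)<\infty$, the claim that each of the two extremal problems in (\ref{ch-ou5-ou}) has the same unique solution equal to $\gamma_A^*$ transcribes verbatim assertions (\ref{cl1ar}) and (\ref{cl2ar}) of Theorem~\ref{G0'ar} via the identification $\gamma_A=\gamma_A^*$. (Note that $\gamma_A^*\in\widehat{\mathcal E}'_A$ automatically, since $\gamma_A\in\mathcal E'_A$ by Theorem~\ref{G0'ar} and $\kappa\gamma_A^*\leqslant1$ on $X$ by Frostman's maximum principle.)

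The characterization by (a$'$) is immediate from the analogous (a) in Theorem~\ref{G0'ar}: if $\mu\in\mathcal E'_A$ satisfies (a$'$), then $\mu(X)\geqslant\|\mu\|^2\geqslant c^*(A)=c_*(A)$, so by Theorem~\ref{G0'ar}(a) we obtain $\mu=\gamma_A=\gamma_A^*$; conversely, $\gamma_A^*$ itself satisfies (a$'$) in view of (\ref{pr1}) and $c_*(A)=c^*(A)$. For (c$'$), note that $\gamma_A^*\in\Gamma^*_A$ gives $\kappa\gamma_A^*\geqslant1$ q.e.\ on $A$, and combined with $\kappa\gamma_A^*\leqslant1$ everywhere on $X$ (Frostman) this yields $\kappa\gamma_A^*=1$ q.e.\ on $A$. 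For uniqueness within $\mathcal E'_A$: since q.e.\ is strictly stronger than n.e., any $\tau\in\mathcal E'_A$ with $\kappa\tau=1$ q.e.\ on $A$ automatically satisfies $\kappa\tau=1$ n.e.\ on $A$, so Theorem~\ref{G0'ar}(c) forces $\tau=\gamma_A=\gamma_A^*$.

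The argument is not computationally delicate; rather, the one point that demands care is the direction of the implication between n.e.\ and q.e. The Borel hypothesis collapses the inner and outer frameworks onto one another, which is what allows us to harvest the already established ``inner'' theorem as its ``outer'' counterpart; the only potential obstacle, which would arise for non-Borel $A$, is that $\Gamma^+_A$ could strictly contain $\Gamma^*_A$, so that $\gamma_A\neq\gamma_A^*$ and the reduction fails. This is precisely the reason for the Borelness assumption (and for the second-countability or (H$_1$)+(H$_2$) framework that makes Borel sets capacitable).
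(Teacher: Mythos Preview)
Your proposal is correct and follows exactly the route the paper indicates: the paper does not give a separate proof of Theorem~\ref{char-ou1} but simply remarks that, under (H$_1$), (H$_2$) and Borelness of $A$, capacitability yields $c_*(A)=c^*(A)$ and $\Gamma_A^+=\Gamma_A^*$ (hence $\gamma_A=\gamma_A^*$), so that Theorems~\ref{cor2-ou}--\ref{cor2alt-ou} are read off from their inner counterparts; you have made this reduction explicit and handled the q.e.\,$\Rightarrow$\,n.e.\ direction correctly.
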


\begin{theorem}\label{th-char-ou} For any Borel\/ $A\subset X$ with\/ $c^*(A)<\infty$,
the outer equilibrium measure\/ $\gamma^*_A$ can alternatively be characterized as the unique measure of minimum potential in the class\/ $\Gamma^*_A$, that is,
\[\kappa\gamma^*_A=\min_{\nu\in\Gamma_A^*}\,\kappa\nu\text{ \ on\/ $X$.}\]
Furthermore, $\gamma^*_A$ is also of minimum total mass in the class\/ $\Gamma^*_A$, that is,
\[\gamma_A^*(X)=\min_{\nu\in\Gamma_A^*}\,\nu(X).\]
\end{theorem}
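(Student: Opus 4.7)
My plan is to derive Theorem~\ref{th-char-ou} as a direct transcription of its inner counterpart Theorem~\ref{th-char}, exploiting the capacitability of Borel sets guaranteed by the topological hypotheses (H$_1$) and (H$_2$) in force throughout Sect.~\ref{sec-ou-chara}. Since $A$ is Borel with $c^*(A)<\infty$, Theorem~\ref{l-top} gives $c_*(A)=c^*(A)=c(A)<\infty$, so the inner equilibrium measure $\gamma_A$ exists by Theorem~\ref{prop.1.2'} and all the hypotheses of Theorem~\ref{th-char} are met (the first and second maximum principles having been assumed in the present setting).

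The crucial step is to verify the two identifications $\Gamma^+_A=\Gamma^*_A$ and $\gamma_A=\gamma^*_A$, which were already flagged in the preamble to the theorem. For any $\nu\in\mathcal E^+$, the set $A\cap\{\kappa\nu<1\}$ is Borel (since $\kappa\nu$ is l.s.c.\ on $X$ and $A$ is Borel), hence capacitable by Theorem~\ref{l-top}; so $c_*(A\cap\{\kappa\nu<1\})=0$ if and only if $c^*(A\cap\{\kappa\nu<1\})=0$, which gives $\Gamma^+_A=\Gamma^*_A$. The equality $\gamma_A=\gamma^*_A$ then follows from the uniqueness of the minimizer of the energy over this common class (Theorem~\ref{prop.1.2'}, together with \cite[Theorem~4.3]{F1} for the outer side).

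With these identifications the two claims of the theorem become verbatim translations of Theorem~\ref{th-char}. For the minimum-potential statement, Theorem~\ref{th-char} asserts $\kappa\gamma_A=\min_{\nu\in\Gamma^+_A}\kappa\nu$ on $X$, and substituting yields $\kappa\gamma^*_A=\min_{\nu\in\Gamma^*_A}\kappa\nu$. For the minimum-total-mass statement, hypothesis (H$_2$) from Sect.~\ref{sec-ou-chara} is precisely the $\sigma$-compactness of $X$ required by the second half of Theorem~\ref{th-char}, which gives $\gamma_A(X)=\min_{\nu\in\Gamma^+_A}\nu(X)$; the same substitution then produces the desired identity for $\gamma^*_A$. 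Uniqueness of the minimizer in either extremal problem is inherited from Theorem~\ref{th-char} via the same identifications.

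Since the argument is essentially bookkeeping, there is no serious obstacle. The only point requiring any care is the passage between ``n.e.\ on $A$'' and ``q.e.\ on $A$'' for potentials $\kappa\nu$ with $\nu\in\mathcal E^+$, and this is precisely what the Borel hypothesis on $A$ and the topological hypotheses on $X$ (via Theorem~\ref{l-top}) give for free.
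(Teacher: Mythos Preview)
Your proposal is correct and follows exactly the paper's approach: the paper does not give a separate proof of Theorem~\ref{th-char-ou} but derives it (together with the other outer results of Sect.~\ref{sec-ou-chara}) from its inner counterpart Theorem~\ref{th-char} via the identifications $\Gamma^+_A=\Gamma^*_A$ and $\gamma_A=\gamma^*_A$ established in the preamble to that section. One small slip: you assert that uniqueness is inherited ``in either extremal problem,'' but Theorem~\ref{th-char} claims uniqueness only for the minimum-\emph{potential} characterization, not for minimum total mass (cf.\ Remark~\ref{t-m-nonun}); this is harmless here since Theorem~\ref{th-char-ou} as stated does not assert the latter uniqueness.
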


\begin{theorem}\label{cor2alt-ou}For any Borel\/ $A\subset X$ with\/ $c^*(A)<\infty$, $\gamma^*_A$ can alternatively be characterized as the unique measure in\/ $\mathcal E^+$ satisfying any of the three limit relations
\begin{align*}&\gamma_K^*\to\gamma^*_A\text{ \ strongly in\/ $\mathcal E^+$ as\/ $K\uparrow A$},\\
&\gamma_K^*\to\gamma^*_A\text{ \ vaguely in\/ $\mathcal E^+$ as\/ $K\uparrow A$},\\
&\kappa\gamma_K^*\uparrow\kappa\gamma^*_A\text{ \ pointwise on\/ $X$  as\/ $K\uparrow A$},
\end{align*}
where\/ $\gamma_K^*$ denotes the only measure in\/ $\mathcal E^+_K$ having the property\/ $\kappa\gamma_K^*=1$ q.e.\ on\/ $K$.\end{theorem}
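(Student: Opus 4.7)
The plan is to deduce Theorem~\ref{cor2alt-ou} from its inner-theoretic counterpart Theorem~\ref{cor2alt} by identifying, under the Borel hypothesis on $A$, the inner objects with their outer analogues throughout. Since $A$ is Borel with $c^*(A)<\infty$, Theorem~\ref{l-top} yields capacitability, hence $c_*(A)=c^*(A)<\infty$, and the discussion preceding Theorem~\ref{cor2-ou} already gives $\gamma_A=\gamma_A^*$ (because $\Gamma_A^+=\Gamma_A^*$ for Borel $A$).

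The crux is the corresponding identification $\gamma_K=\gamma_K^*$ for each compact $K\in\mathfrak C_A$. By Theorem~\ref{cor2alt}, $\gamma_K$ is the unique $\nu\in\mathcal E^+_K$ with $\kappa\nu=1$ n.e.\ on $K$; any measure satisfying the stronger q.e.\ equality automatically satisfies the n.e.\ one (since inner capacity is dominated by outer capacity), so once we show that $\gamma_K$ itself satisfies $\kappa\gamma_K=1$ q.e.\ on $K$, that uniqueness immediately forces $\gamma_K^*=\gamma_K$. For this, Frostman's maximum principle applied to (\ref{eq-pr-c2}) gives $\kappa\gamma_K\leqslant 1$ on all of $X$, while (\ref{eq-pr-c1}) gives $\kappa\gamma_K\geqslant 1$ n.e.\ on $K$, so the exceptional set
\[
E:=\{x\in K:\kappa\gamma_K(x)\ne 1\}=K\cap\{\kappa\gamma_K<1\}
\]
is Borel (the intersection of the Borel set $K$ with an open set, by lower semicontinuity of $\kappa\gamma_K$) and has $c_*(E)=0$. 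Theorem~\ref{l-top} then upgrades this to $c^*(E)=0$, as required.

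With these identifications in hand, the three limit relations in Theorem~\ref{cor2alt-ou} and the uniqueness of the limit in $\mathcal E^+$ follow verbatim from the corresponding statements of Theorem~\ref{cor2alt} applied to the Borel set $A$ (which still satisfies $c_*(A)<\infty$). The main, and essentially only, obstacle is the Borel-exceptional-set argument upgrading n.e.\ to q.e.\ on compact $K$; this step genuinely uses Frostman's maximum principle to secure the upper bound $\kappa\gamma_K\leqslant 1$ on $X$ (so that $E$ is relatively open in $K$, hence Borel), together with the capacitability of Borel sets furnished by Theorem~\ref{l-top}. Once that identification is secured, the proof amounts to a direct translation of Theorem~\ref{cor2alt} into outer terminology.
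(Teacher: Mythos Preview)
Your proof is correct and follows the paper's (implicit) approach: the paper derives Theorems~\ref{cor2-ou}--\ref{cor2alt-ou} wholesale from their inner counterparts by invoking capacitability of Borel sets (Theorem~\ref{l-top}) to identify $\gamma_A=\gamma_A^*$, $\gamma_K=\gamma_K^*$, and to upgrade n.e.\ to q.e.\ on Borel exceptional sets~--- exactly what you spell out. One minor aside: Frostman is not actually needed to see that $E$ is Borel, since $\kappa\gamma_K$ is l.s.c.\ and hence Borel measurable (so $K\cap\{\kappa\gamma_K\ne1\}$ is Borel regardless); the first and second maximum principles are of course still essential, but because Theorem~\ref{cor2alt} itself requires them.
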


\section{Alternative characterizations of outer swept measures}\label{sec-bal-ou}

In this section we assume that a locally compact space $X$ is {\it $\sigma$-com\-pact and perfectly normal}, and that a kernel $\kappa$ is {\it perfect and satisfies the domination principle}.

For $\mu\in\mathcal E^+$ and $A\subset X$, denote
\begin{equation}\label{eq-bal-ouu}\Gamma_{A,\mu}^*:=\bigl\{\nu\in\mathcal E^+: \ \kappa\nu\geqslant\kappa\mu\text{ \ q.e.\ on $A$}\bigr\}.\end{equation}

\begin{definition}\label{def-ou-ba}The {\it outer balayage\/} $\mu^{*A}$ of $\mu\in\mathcal E^+$ to $A\subset X$ is defined as the measure of minimum energy in the class $\Gamma_{A,\mu}^*$, that is,  $\mu^{*A}\in\Gamma_{A,\mu}^*$ and
\begin{equation*}\|\mu^{*A}\|^2=\min_{\nu\in\Gamma_{A,\mu}^*}\,\|\nu\|^2.\end{equation*}\end{definition}

Observe that this definition differs from that of inner balayage (Definition~\ref{def-bal-n}) only by replacing an exceptional set in (\ref{io-n}) by that of {\it outer capacity\/} zero, cf.\ (\ref{eq-bal-ouu}).

\begin{theorem}\label{th-bal-ou}
  For any\/ $\mu\in\mathcal E^+$ and any Borel\/ $A\subset X$, there exists the unique outer balayage\/ $\mu^{*A}$, introduced by Definition\/~{\rm\ref{def-ou-ba}}, and it satisfies the three relations  \begin{align*}
\kappa\mu^{*A}&=\kappa\mu\text{ \ q.e.\ on\ }A,\\
\kappa\mu^{*A}&=\kappa\mu\text{ \ $\mu^{*A}$-a.e.,}\\
\kappa\mu^{*A}&\leqslant\kappa\mu\text{ \ on $X$.}
\end{align*}
Furthermore, the outer balayage\/ $\mu^{*A}$ actually coincides with the inner balayage\/ $\mu^A$,
and it can alternatively be characterized by means of any of the following\/ {\rm(}equivalent\/{\rm)} assertions\/ {\rm(a$'$)--(d$'$)}:
\begin{itemize}
\item[\rm(a$'$)] $\mu^{*A}$ is the unique measure of minimum potential in\/ $\Gamma_{A,\mu}^*$, that is, $\mu^{*A}\in\Gamma_{A,\mu}^*$ and
\[\kappa\mu^{*A}=\min_{\nu\in\Gamma_{A,\mu}^*}\,\kappa\nu\text{ \ on\/ $X$}.\]
\item[\rm(b$'$)] $\mu^{*A}$ is the unique orthogonal projection of\/ $\mu$ in the pre-Hilbert space\/ $\mathcal E$ onto the\/ {\rm(}convex, strongly complete\/{\rm)} cone $\mathcal E'_A$, that is, $\mu^{*A}\in\mathcal E'_A$ and
\[\|\mu-\mu^{*A}\|=\min_{\nu\in\mathcal E_A'}\,\|\mu-\nu\|.\]
\item[\rm(c$'$)] $\mu^{*A}$ is the unique measure in\/ $\mathcal E'_A$ having the property
\[\kappa\mu^{*A}=\kappa\mu\text{ \ q.e.\ on\ }A.\]
\item[\rm(d$'$)] $\mu^{*A}$ is the unique measure in\/ $\mathcal E^+$ satisfying any of the three limit relations
\begin{align*}
 &\mu^{*K}\to\mu^{*A}\text{ \ strongly in\/ $\mathcal E^+$ as\/ $K\uparrow A$},\\
 &\mu^{*K}\to\mu^{*A}\text{ \ vaguely in\/ $\mathcal E^+$ as\/ $K\uparrow A$},\\
 &\kappa\mu^{*K}\uparrow\kappa\mu^{*A}\text{ \ pointwise on\/ $X$ as\/ $K\uparrow A$},
\end{align*}
where\/ $\mu^{*K}$ denotes the only measure in\/ $\mathcal E^+_K$ with\/ $\kappa\mu^{*K}=\kappa\mu$ q.e.\ on\/ $K$.
\end{itemize}
If moreover the class\/ $\mathcal E^+_A$ is strongly closed\/ {\rm(}or in particular if the set\/ $A$ is quasiclosed\/{\rm)}, then\/ {\rm(b$'$)} and\/ {\rm(c$'$)} remain valid with\/ $\mathcal E^+_A$ in place of\/ $\mathcal E'_A$.
\end{theorem}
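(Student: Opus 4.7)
The plan is to reduce the entire theorem to its inner-balayage analogue, Theorem~\ref{th-intr}, by showing that when $A$ is Borel the outer and inner balayage problems are identical, i.e.\ $\Gamma^*_{A,\mu}=\Gamma^+_{A,\mu}$ and consequently $\mu^{*A}=\mu^A$.

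First I would verify the class identity. For any $\nu\in\mathcal E^+$, both potentials $\kappa\nu$ and $\kappa\mu$ are lower semicontinuous, so the set $E:=A\cap\{\kappa\nu<\kappa\mu\}$ is Borel: indeed $\{\kappa\nu<\kappa\mu\}=\bigcup_{r\in\mathbb Q}(\{\kappa\nu<r\}\cap\{\kappa\mu>r\})$, where $\{\kappa\mu>r\}$ is open and $\{\kappa\nu<r\}$ is $F_\sigma$. Being Borel, $E$ is capacitable by Theorem~\ref{l-top}, so $c_*(E)=c^*(E)$, and therefore the conditions ``$\kappa\nu\geqslant\kappa\mu$ n.e.\ on $A$'' and ``$\kappa\nu\geqslant\kappa\mu$ q.e.\ on $A$'' coincide. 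Comparing Definitions~\ref{def-bal-n} and~\ref{def-ou-ba} then yields $\mu^{*A}=\mu^A$, and existence/uniqueness of $\mu^{*A}$ follow at once from Theorem~\ref{th-intr}.

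Next I would transfer the three properties \eqref{eq-pr-10}--\eqref{eq-pr-102} and the alternative characterizations (a)--(d) of Theorem~\ref{th-intr}. For \eqref{eq-pr-10}, the same Borel-plus-capacitability argument applied to the set $A\cap\{\kappa\mu^A\neq\kappa\mu\}$ promotes ``n.e.'' to ``q.e.'', yielding the first displayed relation of the theorem; the remaining two relations do not involve exceptional sets on $A$ and carry over verbatim. The characterizations (a$'$) and (c$'$) follow from (a) and (c) by the class identity $\Gamma^*_{A,\mu}=\Gamma^+_{A,\mu}$ together with the n.e./q.e.\ equivalence on Borel $A$; (b$'$) is a verbatim restatement of (b), since that characterization involves only the cone $\mathcal E'_A$ and no exceptional sets; and (d$'$) follows from (d), because each $K\in\mathfrak C_A$ is compact hence Borel, which forces $\mu^{*K}=\mu^K$ by the same reasoning.

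Finally, when $\mathcal E^+_A$ is strongly closed (in particular, when $A$ is quasiclosed, by Lemma~\ref{l-quasi}), one has $\mathcal E'_A=\mathcal E^+_A$ by definition of the strong closure, so the refined versions of (b$'$) and (c$'$) are immediate. I expect no genuine obstacle beyond the bookkeeping of replacing ``n.e.'' by ``q.e.'' via capacitability of Borel sets; that single observation is what couples the outer problem to the inner theory already developed in Sect.~\ref{i1}.
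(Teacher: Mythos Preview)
Your proposal is correct and follows essentially the same approach as the paper: both reduce the outer problem to the inner one by proving $\Gamma_{A,\mu}^*=\Gamma_{A,\mu}^+$ via capacitability of the Borel exceptional set (Theorem~\ref{l-top}), whence $\mu^{*A}=\mu^A$, and then everything is read off from Theorem~\ref{th-intr} and Lemma~\ref{l-quasi}. Your version is slightly more explicit in verifying the Borel nature of $\{\kappa\nu<\kappa\mu\}$, but the logical structure is identical.
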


\begin{proof} We first observe that the classes $\Gamma_{A,\mu}^+$ and $\Gamma_{A,\mu}^*$, introduced by (\ref{io-n}) and (\ref{eq-bal-ouu}), respectively, coincide:
\begin{equation}\label{theta}\Gamma_{A,\mu}^+=\Gamma_{A,\mu}^*,\end{equation}
which follows from the fact that for any $\mu,\nu\in\mathcal E^+$, the set $A\cap\{\kappa\mu\ne\kappa\nu\}$ is Borel, hence capacitable (Theorem~\ref{l-top}).
This in turn implies that the inner balayage $\mu^A$ serves simultaneously as the outer balayage $\mu^{*A}$, i.e.
\begin{equation}\label{inotb}
  \mu^{*A}=\mu^A,
\end{equation}
and
all we need to prove can therefore be immediately derived from Theorem~\ref{th-intr} and Lemma~\ref{l-quasi}.
\end{proof}

In the following two corollaries assume in addition that {\it Frostman's maximum principle\/} is fulfilled.

\begin{corollary}\label{bal-tot-m-ou}
For any\/ $\mu\in\mathcal E^+$ any any Borel\/ $A\subset X$, the outer balayage\/ $\mu^{*A}$ is of minimum total mass in the class\/ $\Gamma_{A,\mu}^*$, that is,
\[\mu^{*A}(X)=\min_{\nu\in\Gamma_{A,\mu}^*}\,\nu(X).\]
\end{corollary}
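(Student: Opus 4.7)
The plan is to observe that this corollary reduces immediately to the inner version, Corollary~\ref{bal-tot-m}, via the identifications already established in the proof of Theorem~\ref{th-bal-ou}. Specifically, equation (\ref{theta}) in that proof shows $\Gamma_{A,\mu}^* = \Gamma_{A,\mu}^+$ when $A$ is Borel (since then $A\cap\{\kappa\mu\ne\kappa\nu\}$ is Borel and therefore capacitable by Theorem~\ref{l-top}), and equation (\ref{inotb}) shows $\mu^{*A}=\mu^A$. So the statement to be proved is literally the same as Corollary~\ref{bal-tot-m} applied in the present setting.

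First I would verify that the hypotheses of Corollary~\ref{bal-tot-m} are all in force: the space $X$ is $\sigma$-compact by assumption, and Frostman's maximum principle is assumed in the corollary's statement. Hence Corollary~\ref{bal-tot-m} applies and gives $\mu^A(X)=\min_{\nu\in\Gamma_{A,\mu}^+}\,\nu(X)$; rewriting using $\mu^A=\mu^{*A}$ and $\Gamma_{A,\mu}^+=\Gamma_{A,\mu}^*$ produces the desired identity.

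If one prefers a self-contained argument, it is equally short: since $\mu^{*A}\in\Gamma_{A,\mu}^*$, it suffices to show $\mu^{*A}(X)\leqslant\nu(X)$ for every $\nu\in\Gamma_{A,\mu}^*$. By Theorem~\ref{th-bal-ou}(a$'$) we have $\kappa\mu^{*A}\leqslant\kappa\nu$ everywhere on $X$, and the principle of positivity of mass (Theorem~\ref{pr-pos}), whose hypotheses — $\sigma$-compactness of $X$, strict pseudo-positivity of $\kappa$ (which follows from $\kappa$ being perfect, hence strictly positive definite), the domination principle, and Frostman's maximum principle — are all satisfied, then yields $\mu^{*A}(X)\leqslant\nu(X)$.

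There is no real obstacle: the content of the corollary was already absorbed in the setup for Theorem~\ref{th-bal-ou}, so the only thing to check is that all the hypotheses needed to invoke either Corollary~\ref{bal-tot-m} or Theorem~\ref{pr-pos} are indeed present here, which they are.
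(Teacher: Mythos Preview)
Your proof is correct and follows exactly the paper's approach: substitute the identifications $\Gamma_{A,\mu}^*=\Gamma_{A,\mu}^+$ (equation~(\ref{theta})) and $\mu^{*A}=\mu^A$ (equation~(\ref{inotb})) into the inner result~(\ref{eq-t-m}). Your additional self-contained argument via Theorem~\ref{th-bal-ou}(a$'$) and Theorem~\ref{pr-pos} is also valid and simply unwinds what the reference to Corollary~\ref{bal-tot-m} already encapsulates.
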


\begin{proof} This follows by substituting (\ref{theta}) and (\ref{inotb}) into (\ref{eq-t-m}).\end{proof}

\begin{corollary}\label{prop.1.1-ou}
For any Borel sets\/ $A,Q\subset X$ such that\/ $A\subset Q$ and\/ $c^*(Q)<\infty$,
\[(\gamma^*_Q)^{*A}=\gamma^*_A.\]
\end{corollary}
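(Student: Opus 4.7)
The plan is to reduce the outer statement to the already-proved inner version (Lemma~\ref{prop.1.1}) by identifying outer concepts with inner ones on Borel sets, using the capacitability theorem and the coincidence $\mu^{*A} = \mu^A$ supplied by Theorem~\ref{th-bal-ou}.

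First I would observe that since $A$ and $Q$ are Borel, Theorem~\ref{l-top} yields their capacitability, so $c_*(A) = c^*(A)$ and $c_*(Q) = c^*(Q)$. By monotonicity of inner capacity and the assumption $c^*(Q) < \infty$, both of these common values are finite. Hence the inner equilibrium measures $\gamma_A$ and $\gamma_Q$ provided by Theorem~\ref{prop.1.2'} exist, and as explained in the discussion preceding Theorem~\ref{cor2-ou}, they coincide with the corresponding outer equilibrium measures:
\[
\gamma_A = \gamma_A^*, \qquad \gamma_Q = \gamma_Q^*.
\]

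Next, since $A$ is Borel, Theorem~\ref{th-bal-ou} applies to the measure $\mu := \gamma_Q^* = \gamma_Q \in \mathcal E^+$ and the set $A$, giving the identification of outer with inner balayage:
\[
(\gamma_Q^*)^{*A} = (\gamma_Q)^A.
\]
Invoking now Lemma~\ref{prop.1.1} with the pair $(A,Q)$, which is legitimate because $A \subset Q$ and $c_*(Q) < \infty$, we obtain $(\gamma_Q)^A = \gamma_A$. Combining these equalities with $\gamma_A = \gamma_A^*$ yields the desired identity $(\gamma_Q^*)^{*A} = \gamma_A^*$.

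Since every ingredient in this chain is already established earlier in the paper, there is no genuine obstacle; the only point deserving care is verifying that the hypothesis $c^*(Q) < \infty$ is indeed what is needed to invoke Lemma~\ref{prop.1.1}, which it is thanks to the capacitability of the Borel set~$Q$.
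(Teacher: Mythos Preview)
Your proof is correct and follows essentially the same route as the paper's own argument: identify $\gamma_A=\gamma_A^*$ and $\gamma_Q=\gamma_Q^*$ via capacitability of Borel sets, invoke the coincidence $\mu^{*A}=\mu^A$ from Theorem~\ref{th-bal-ou} with $\mu=\gamma_Q^*$, and then apply Lemma~\ref{prop.1.1}. The paper's proof is just a terser rendering of exactly this chain.
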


\begin{proof} As observed in Sect.~\ref{sec-ou-chara}, $\gamma_A=\gamma_A^*$ and $\gamma_Q=\gamma_Q^*$. Substituting this and (\ref{inotb}) with $\mu:=\gamma^*_Q$ into (\ref{eq.1.1}) establishes the corollary.\end{proof}

\section{Acknowledgements} The author is deeply indebted to Bent Fuglede for reading and commenting on the manuscript.

\end{document}